%%%%%%%%%%%%%%%% LaTeX2e-Document %%%%%%%%%%%%%%%%%%%%%%

\documentclass[a4paper,reqno,11pt]{amsart}

\newtheorem{theorem}{Theorem}[section]
\newtheorem{proposition}[theorem]{Proposition}
\newtheorem{lemma}[theorem]{Lemma}
\newtheorem{corollary}[theorem]{Corollary}

\newtheorem*{conjecture*}{Conjecture}
\theoremstyle{definition}
\newtheorem{claim}{Claim}
\newtheorem*{question}{Question}

\newtheorem{subclaim}{Claim}[claim]

\usepackage{amssymb}
\usepackage{hyperref}
\usepackage{enumitem}
\usepackage{mathrsfs}
\usepackage[all,cmtip]{xy}

\allowdisplaybreaks

\addtolength{\textwidth}{2cm}
\addtolength{\oddsidemargin}{-1cm}
\addtolength{\evensidemargin}{-1cm}
\addtolength{\textheight}{1cm}
\addtolength{\topmargin}{-0.5cm}

\newcommand{\Gal}{\mathrm{Gal}}
\newcommand{\Cl}{\mathrm{Cl}}
\newcommand{\Fitt}{\mathrm{Fitt}}
\newcommand{\Frob}{\mathrm{F}}
\newcommand{\Ker}{\mathrm{Ker}}

\begin{document}

\title{Index formulae for Stark units and their solutions}

\author{Xavier-Fran\c{c}ois Roblot}

\address{Tokyo Institute of Technology, Department of Mathematics, 2-12-1 Ookayama, Meguro-ku, Tokyo 
152-8550, Japan}
\email{roblot@math.titech.ac.jp}

\thanks{Supported by the JSPS Global COE \emph{CompView}.}

\date{\today \ (draft v1)}

\begin{abstract}
Let $K/k$ be an abelian extension of number fields with a distinguished place of $k$ that splits totally in $K$. In that situation, the abelian rank one Stark conjecture predicts the existence of a unit in $K$, called the Stark unit, constructed from the values of the $L$-functions attached to the extension. In this paper, assuming the Stark unit exists, we prove index formulae for it. In a second part, we study the solutions of the index formulae and prove that they admit solutions unconditionally for quadratic, quartic and sextic (with some additional conditions) cyclic extensions. As a result we deduce a weak version of the conjecture (``up to absolute values'') in these cases and precise results on when the Stark unit, if it exists, is a square. 
\end{abstract}

\maketitle

\section{Introduction}

Let $K/k$ be an abelian extension of number fields. Denote by $G$ its Galois group. Let $S_\infty$ and $S_\textrm{ram}$ denote respectively the set of infinite places of $k$ and the set of finite places of $k$ ramified in $K/k$. Let $S(K/k) := S_\infty \cup S_\mathrm{ram}$. Fix a finite set $S$ of places of $k$ containing $S(K/k)$ and of cardinality at least $2$. Assume that there exists at least one place in $S$, say $v$, that splits totally in $K/k$ and fix a place $w$ of $K$ dividing $v$. Let $e$ be the order of the group of roots of unity in $K$. In this setting Stark \cite{stark:4} made the following conjecture. 

\begin{conjecture*}[\sc Abelian rank one Stark conjecture]\ \\
 There exists an $S$-unit $\varepsilon_{K/k,S}$ in $K$ such that
\begin{enumerate}[label=\textup{(\arabic{*})}]
\item For all characters $\chi$ of $G$
\begin{equation*}
L'_{K/k,S}(0, \chi) = \frac{1}{e} \sum_{g \in G} \chi(g) \log |\varepsilon_{K/k,S} ^g|_w  
\end{equation*}
where $L_{K/k,S}(s, \chi)$ denotes the $L$-function associated to $\chi$ with Euler factors at prime ideals in $S$ deleted. 
\item The extension $K( \varepsilon_{K/k,S}^{\,1/e})/k$ is abelian. 
\item If furthermore $|S| \geq 3$ then $\varepsilon$ is a unit of $K$.
\end{enumerate}
\end{conjecture*}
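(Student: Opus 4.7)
The statement is the classical rank-one abelian Stark conjecture and remains open in general; indeed the paper takes it as a hypothesis in its first part rather than establishing it, so any plan can at best reproduce the strategy that succeeds in the known cases and point out why a general argument is out of reach.

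In cases where the conjecture has been proven, the plan is to produce $\varepsilon_{K/k,S}$ explicitly by analytic means, verify the $L$-value identity on a basis of characters of $G$, and check the Kummer-type condition (2) by a direct cyclotomic or elliptic computation. For $k=\mathbb{Q}$ one takes $K$ inside a cyclotomic field $\mathbb{Q}(\zeta_f)$ and writes $\varepsilon_{K/k,S}$ as an appropriate norm of $1-\zeta_f$; the classical formula expressing $L'(0,\chi)$ in terms of logarithms $\log|1-\zeta_f^a|$ for a Dirichlet character $\chi$ then yields (1) after adjusting Euler factors at the places in $S\setminus S(K/k)$, while (2) is automatic because everything already sits inside an abelian extension of $\mathbb{Q}$. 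When $k$ is imaginary quadratic, elliptic units produced via Kronecker's second limit formula play the analogous role.

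For a general base field $k$, the first step is purely formal: using the character decomposition $\mathbb{C}[G]\simeq \prod_\chi \mathbb{C}$ one assembles a vector $\eta$ in $\mathbb{C}\otimes_{\mathbb{Z}}\mathcal{O}_{K,S}^\times$ whose image under the $w$-adic logarithm has $\chi$-component equal to $\frac{1}{e}L'_{K/k,S}(0,\chi)$. The hard step is then to show that $\eta$ is algebraic, i.e.\ lies in $\mathbb{Q}\otimes_{\mathbb{Z}} \mathcal{O}_{K,S}^\times$, and in fact comes from a genuine $S$-unit of $K$ up to extraction of an $e$-th root; once this is granted, Galois equivariance of the construction gives (1) on the nose, and one verifies (2) by showing that the reciprocity map attached to $\varepsilon_{K/k,S}^{1/e}$ factors through $\Gal(K/k)$.

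The central obstacle is precisely the algebraicity step: without an independent arithmetic, geometric or modular construction of units whose special values match the derivatives $L'_{K/k,S}(0,\chi)$, there is no known mechanism to pass from the transcendental side back to $K^\times$. For this reason the present paper refrains from attacking the conjecture head-on and instead takes (1)--(3) as a hypothesis, with the aim of extracting from it the arithmetic information encoded in the index formulae; in the second half, this information is then combined with genus-theoretic input to recover a weak form of the conjecture unconditionally in low-degree cyclic situations.
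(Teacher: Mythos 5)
You correctly recognize that the quoted statement is a \emph{conjecture}, not a theorem of the paper: the paper assumes it as a hypothesis in Sections~2--3 and only later, in Sections~5--7, works unconditionally towards weak versions in low-degree cyclic cases. Your account of the known cases ($k=\mathbb{Q}$ via cyclotomic units, $k$ imaginary quadratic via elliptic units), of the character-by-character construction of a transcendental candidate, and of the algebraicity obstruction is accurate and consistent with the paper's own framing; there is nothing to compare against because the paper supplies no proof of this statement.
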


The unit $\varepsilon_{K/k,S}$ is called the Stark unit associated to the extension $K/k$, the set of places $S$ and the infinite place $v$.\footnote{In fact the place $w$ but changing the place $w$ just amounts to replace the Stark unit by one of its conjugate.} It is unique up to multiplication by a root of unity in $K$. A good reference for this conjecture is \cite[Chap.~IV]{tate:book}. 

The starting point of this research is the conjectural method used in \cite{cohen-roblot} and \cite{stark12} (and inspired by  \cite{stark:example}) to construct totally real abelian extensions of totally real fields. Let $L/k$ be such an extension. The idea is to construct a quadratic extension $K/L$, abelian over $k$, satisfying some additional conditions similar to the assumptions (A1), (A2) and (A3) below. Assuming the Stark conjecture for $K/k$, $S(K/k)$ and a fixed real place $v$ of $k$, one can prove that $K = k(\varepsilon)$ and $L = k(\alpha)$ where $\alpha := \varepsilon + \varepsilon^{-1}$ and $\varepsilon := \varepsilon_{K/k, S(K/k)}$ is the corresponding Stark unit. Using part~(1) of the conjecture, one computes the minimal polynomial $A(X)$ of $\alpha$ over $k$. The final step is to check unconditionally that the polynomial $A(X)$ does indeed define the extension $L$. 

One notices in that setting that the rank of the units of $K$ is equal to the rank of units of $L$ plus the rank of the module generated by the Stark unit and its conjugates over $k$. A natural question to ask is wether the index of the group generated by the units of $L$ and the conjugates of the Stark unit has finite index inside the group of units of $K$ and, if so, if this index can be computed. A positive answer to the first question is given by Stark in \cite[Th.~1]{stark:3}. In \cite{arakawa}, Arakawa gives a formula for this index when $k$ is a quadratic field. Using similar methods, we obtain a general result (Theorem~\ref{th:globindex}) in the next section. Then we derive a ``relative'' index formula (Theorem~\ref{th:main}) that relates the index of the subgroup generated over $\mathbb{Z}[G]$ by the Stark unit inside the ``minus-part'' of the group of units of $K$ to the cardinality of the ``minus-part'' of the class group of $K$.\footnote{Similar in some way to the index formulae for cyclotomic units, see \cite[Chap.~8]{washington:book}.} In the third section, we use results of Rubin \cite{rubin:index} on some type of Gras conjecture for Stark units to show that the relative index formula implies local relative index formulae (Theorem~\ref{th:localindex}). Starting with the fourth section, we stop assuming the abelian rank one Stark conjecture and study directly the solutions to the index formulae.  In section 4, we look at how much these index formulae characterize the Stark unit (Proposition~\ref{prop:prodform} and Corollary~\ref{cor:unicity}). In the next section, we introduce the algebraic tools that will be needed to prove the existence of solutions in some cases in the following sections. We also reprove in that section the abelian rank one Stark conjecture for quadratic extensions (Theorem~\ref{th:quadratic}). Finally, sections~6 and 7 are devoted to a proof that solutions to the index formulae always exist for quartic extensions (Theorem~\ref{th:quartic}) and sextic extensions (Theorem~\ref{th:sextic}) with some additional conditions in that case. We show that the existence of solutions in those cases imply a weak version of the conjecture where part~(1) is satisfied only up to absolute values.\footnote{Unfortunately, in most cases the values are complex and there does not appear to be any obvious way to remove these absolute values.} We also obtain results on when the Stark unit, if it exists, is a square (Corollary~\ref{cor:whensquare}, Theorem~\ref{th:quadratic}, Corollary~\ref{cor:square4} and Corollary~\ref{cor:square6}). 
 
\section{The index formulae}\label{sec:index}

We assume from now on that the place $v$ is infinite\footnote{For $v$ a finite place, the abelian rank one Stark conjecture is basically equivalent to the Brumer-Stark conjecture, see \cite[\S IV.6]{tate:book}. Recent results of  Greither and Popescu \cite{gp:iwasawa} imply the validity of the Brumer-Stark conjecture away from its $2$-part and under the hypothesis that an appropriate Iwasawa $\mu$-invariant vanishes.} and that $k$ has at least two infinite places. Therefore we can always apply the conjecture for any finite set $S$ containing $S(K/k)$. The cases that we are excluding are $k = \mathbb{Q}$ and $k$ a complex quadratic field. In both cases the  conjecture is proved and the Stark unit is strongly related to cyclotomic units and elliptic units respectively.

\smallskip

Fix a finite set $S$ of places of $k$ containing $S(K/k)$. We make the following additional assumptions. 
\begin{enumerate}[label=(A\arabic{*})]
\item $k$ is totally real and the infinite places of $K$ above $v$ are real, the infinite places of $K$ not above $v$ are complex. 
\item The maximal totally real subfield $K^+$ of $K$ satisfies $[K:K^+] = 2$.
\item All the finite primes in $S$ are either ramified or inert in $K/K^+$.
\end{enumerate}

\smallskip

If $S$ contains more than one place that splits totally in $K/k$ then the conjecture is trivially true with the Stark unit being equal to $1$. Therefore the only non trivial case excluded by (A1) is the case when $k$ has exactly one complex place and $K$ is totally complex. It is likely that most of the methods and results in this paper can be adapted to cover also that case. Assumptions (A2) and (A3) are necessary to ensure that the rank of the group generated by the units of $K^+$ and the conjugate of the Stark unit has finite index inside the group of units of $K$. Without these assumptions, global index formulae for Stark units as they are stated in this article cannot exist although it is still possible to prove index formulae for some $p$-adic characters if one takes also into account Stark units coming from subextensions (see \cite{rubin:index} or Section~3). 

\smallskip

\noindent\textbf{We assume until further notice that the conjecture is true for the extension $K/k$, the set of places $S$ and the distinguished place $v$.}\footnote{Since $v$ is the only real place of $k$ that stays real in $K$, we will usually not specify it.}

\smallskip

Denote by $\varepsilon := \varepsilon_{K/k, S}$ the corresponding Stark unit. From now on, all subfields of $K$ (including $K$ itself) are identified with their image in $\mathbb{R}$ by $w$. We make the Stark unit unique by imposing that $\varepsilon > 0$. It follows that $\varepsilon^g > 0$ for all $g \in G$, see \cite[\S IV.3.7]{tate:book}. One can also prove under these hypothesis, see \cite[Lem.~2.8]{stark12}, that $|S(K/k)| \geq 3$ and therefore $\varepsilon$ is a unit of $K$ by part (3) of the Conjecture, and that $|\varepsilon|_{w'} = 1$ for any place $w'$ of $K$ not above $v$.  

Let $m$ be the degree of $K^+/k$ and $d$ be the degree of $k/\mathbb{Q}$. Thus we have $[K:k] = 2m$ and $[K:\mathbb{Q}] = 2md$. Let $\tau$ denote the non trivial element of $\Gal(K/K^+)$. It is the complex conjugation of the extension $K$ and, by the above remark, we have $\varepsilon^\tau = \varepsilon^{-1}$. Let $G^+$ denote the Galois group of $K^+/k$, thus $G^+ \cong G/\langle \tau \rangle$. It follows from (A1) that the signatures of $K^+$ and $K$ are respectively $(dm,0)$ and $(2m, m(d-1))$. Therefore the rank of $U_{K^+}$ and $U_K$, the group of units of $K^+$ and $K$, are respectively $dm - 1$ and $2m + m(d-1) - 1 = (dm-1)+m$. Let $U_{\rm Stark}$ be the multiplicative $\mathbb{Z}[G]$-module generated by $\pm 1$, $\varepsilon$ and $U_{K^+}$. Let $R := \{\rho_1, \dots, \rho_m\}$ be a fixed set of representatives of $G$ modulo $\langle \tau \rangle$. Set  $\varepsilon_\ell := \rho_\ell^{-1}(\varepsilon)$ for $\ell = 1, \dots, m$. Since $\tau(\varepsilon) = \varepsilon^{-1}$, the group $U_{\rm Stark}$ is generated over $\mathbb{Z}$ by $\{\pm 1, \eta_1, \dots, \eta_{dm-1}$, $\varepsilon_1, \dots, \varepsilon_m\}$ where $\eta_1, \dots, \eta_{dm-1}$ is a system of fundamental units of $K^+$. Let $|\cdot|_j$, $1 \leq j \leq (d+1)m$ denote the infinite normalized absolute values of $K$ ordered in the following way. The  $2m$ real absolute values of $K$, corresponding to the places over $v$, are $|\cdot|_j := |\rho_j(\cdot)|$ and $|\cdot|_{j+m} := |\rho_j\tau(\cdot)|$ for $1 \leq j \leq m$. The complex absolute values, corresponding to the infinite places not above $v$, are $|\cdot|_j$ for $2m+1 \leq j \leq (d+1)m$.  The regulator of $U_{\rm Stark}$ is the absolute value of the determinant of the following matrix\footnote{We discard the last absolute value $|\cdot|_{(d+1)m}$.}
\begin{equation*}
\left(\begin{array}{c|c}
\log |\eta_i|_j  & \log |\varepsilon_\ell|_j
\end{array}\right)_{j,(i,\ell)}
\end{equation*}
where $1 \leq j \leq (d+1)m-1$, $1 \leq i \leq dm-1$ and $1 \leq \ell \leq m$. For $1 \leq j \leq (d+1)m$, let $|\cdot|^+_j$ denote the restriction of the absolute value $|\cdot|_j$ to $K^+$. For $1 \leq j \leq m$, the places corresponding to $|\cdot|_j$ and $|\cdot|_j^+$ are real and $\log |\eta_i|^+_j = \log |\eta_i|_j =  \log |\eta_i|_{j+m}$. For $2m + 1 \leq j \leq (d+1)m$, the places corresponding to $|\cdot|_j$ and $|\cdot|_j^+$ are respectively complex and real, thus $\log |\eta_i|^+_j = 2 \log |\eta_i|_j$. Note also that $|\varepsilon_\ell|_{j+m} = |\varepsilon_\ell|^{-1}_m$ for $1 \leq j \leq m$ and $|\varepsilon_\ell|_j = 1$ for $2m+1 \leq j \leq (d+1)m$. Therefore the matrix is equal to
\begin{equation*}
\renewcommand{\arraystretch}{1.2}
\left(\begin{array}{c|c}
\log |\eta_i|^+_j &  \log |\varepsilon_\ell|_j \\[2pt]
\hline
\log |\eta_i|^+_j & -\log |\varepsilon_\ell|_j \\[2pt]
\hline
2 \log |\eta_i|^+_{j'}  & 0 \\
\end{array}\right)_{(j,j'),(i,\ell)}
\end{equation*}
where $1 \leq j \leq m$, $2m+1 \leq j' \leq (d+1)m-1$, $1 \leq i \leq dm-1$ and $1 \leq \ell \leq m$. Now we add the $j$-th row to the $(m+j)$-th row for $1 \leq j \leq m$ and we obtain finally the following matrix with the same determinant
\begin{equation*}
\renewcommand{\arraystretch}{1.2}
\left(\begin{array}{c|c}
\log |\eta_i|^+_j &  \log |\varepsilon_\ell|_j \\[2pt]
\hline
2\log |\eta_i|^+_j & 0 \\[2pt]
\hline
2 \log |\eta_i|^+_{j'}  & 0
\end{array}\right)_{(j,j'),(i,\ell)}.
\end{equation*}
Therefore the regulator of $U_{\rm Stark}$ is 
\begin{equation}\label{eq:eq1}
\textrm{Reg}(U_{\rm Stark}) = \left|\det(\log |\varepsilon_\ell|_j)_{j,\ell}\det\left(2\log |\eta_i|_{j'}^+\right)_{j',i}\right|
\end{equation}
where $1 \leq \ell, j \leq m$, $1 \leq i \leq dm-1$ and $j'$ runs through the set $\{1, \dots, m, 2m+1, \dots, (d+1)m-1\}$. The absolute values $|\cdot|^+_1, \dots, |\cdot|^+_m, |\cdot|^+_{2m+1}, \dots, |\cdot|^+_{(d+1)m-1}$ are the absolute values corresponding to all the infinite places of $K^+$ but one. Thus the second term is $2^{dm-1} R_{K^+}$. For the first term, we have
\begin{equation*}
|\det(\log |\varepsilon_\ell|_j)_{j,\ell}| = |\det(\log |\varepsilon^{\rho\lambda^{-1}}|)_{\rho,\lambda \in R}|.
\end{equation*}
We say that a character $\chi$ of $G$ is even if $\chi(\tau) = 1$, otherwise $\chi$ is odd and $\chi(\tau) = -1$. The even characters of $G$ are the inflations of characters of $G^+$. We have the following modification of the classical determinant group factorization. 

\begin{lemma}\label{lem:detgroup}
Let $a_g\in \mathbb{C}$, for $g \in G$, be such that $a_{\tau g} = -a_{g}$ for all $g \in G$. Then 
\begin{equation*}
\det(a_{\rho\lambda^{-1}})_{\rho, \lambda \in R} = \prod_{\chi \textup{ odd}} \sum_{\rho \in R}  \chi(\rho) a_\rho.
\end{equation*}
\end{lemma}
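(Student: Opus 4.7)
The plan is to realize the given $m\times m$ matrix as the matrix of ``multiplication by a distinguished group-algebra element'', restricted to an $m$-dimensional subspace of $\mathbb{C}[G]$, and then diagonalize by characters.

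First, I would form the element $a := \sum_{g \in G} a_g \, g \in \mathbb{C}[G]$. The hypothesis $a_{\tau g} = -a_g$ is precisely the statement that $\tau a = -a$ in $\mathbb{C}[G]$, so $a$ lies in the minus eigenspace $V^{-} := e^{-} \mathbb{C}[G]$ of $\tau$, where $e^{-} := (1-\tau)/2$. Because $G$ is abelian, $\tau$ commutes with multiplication by $a$, and in fact the whole image $a \cdot \mathbb{C}[G]$ lies in $V^{-}$; in particular multiplication by $a$ restricts to an endomorphism of $V^{-}$. The space $V^{-}$ is $m$-dimensional with basis $\{e^{-}\rho : \rho \in R\}$.

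Next, I would compute the matrix of this endomorphism in that basis. Writing $a \cdot e^{-}\rho = e^{-}(a\rho)$ and splitting the sum over $G$ according to the decomposition $G = R \sqcup \tau R$, the two sign changes $e^{-}(\tau\rho') = -e^{-}\rho'$ and $a_{\tau g} = -a_g$ reinforce each other rather than cancel, and a direct expansion gives
\begin{equation*}
a \cdot e^{-}\rho \;=\; 2 \sum_{\rho' \in R} a_{\rho'\rho^{-1}} \, e^{-}\rho'.
\end{equation*}
Hence the matrix of multiplication by $a$ on $V^{-}$ in this basis is exactly $2\,(a_{\rho\lambda^{-1}})_{\rho,\lambda \in R}$.

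Finally, since $G$ is abelian, $V^{-}$ decomposes as the direct sum of one-dimensional $\chi$-isotypic components over the \emph{odd} characters $\chi$ of $G$ (the even characters act as $0$ on $V^{-}$ because $e^{-}$ kills them), and on the $\chi$-component multiplication by $a$ acts as the scalar $\chi(a) = \sum_g \chi(g)\,a_g$. Pairing each $\rho \in R$ with $\tau\rho$ and using $\chi(\tau) = -1$ for $\chi$ odd gives $\chi(a) = 2 \sum_{\rho \in R} \chi(\rho) a_\rho$. Taking determinants yields $2^m \det(a_{\rho\lambda^{-1}})_{\rho,\lambda} = \prod_{\chi \text{ odd}} 2\sum_{\rho \in R} \chi(\rho) a_\rho$, and cancelling the common factor $2^m$ delivers the claimed identity. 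There is no serious obstacle here; the only mildly delicate step is the sign bookkeeping in the expansion of $a \cdot e^{-}\rho$, where the two minus signs conspire to produce an overall factor of $2$.
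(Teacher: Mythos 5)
Your proof is correct and is essentially the paper's argument in a slightly different guise: the paper works in the function space $E = \{f:G\to\mathbb{C} : f(\tau g) = -f(g)\}$ with the operator $T = \sum_g a_g g$ and compares the basis $(\delta_\rho)_{\rho\in R}$ with the basis of odd characters, while you work in the isomorphic $G$-module $V^- = e^-\mathbb{C}[G]$ with multiplication by $a=\sum_g a_g g$ and compare the basis $(e^-\rho)_{\rho\in R}$ with the idempotent decomposition by odd characters (indeed $\delta_\rho$ corresponds to $2e^-\rho^{-1}$ under the natural isomorphism). Both compute $\det$ of the same operator in two bases, picking up the same factor $2^m$ on each side, so the content and the sign bookkeeping are identical.
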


\begin{proof}
Let $E$ be the $\mathbb{C}$-vector space of functions $f : G \to \mathbb{C}$ such that $f(\tau g) = -f(g)$ for all $g \in G$. Clearly it has dimension $m$ and admits $(\chi)_{\chi \text{ odd}}$ has a basis. Another basis is given by the functions $(\delta_\rho)_{\rho \in R}$ defined by
\begin{equation*}
\delta_\rho(\rho) = 1,\ \delta_\rho(\tau\rho) = -1 \text{ and } \delta_\rho(g) = 0 \text{ for all } g \in G \text{ with } g \not= \rho, \tau\rho.
\end{equation*}
The group $G$ acts on $E$ by $f^\sigma : g  \mapsto f(g\sigma)$ for $f \in E$ and $\sigma \in G$. In particular, we have $f^\tau = -f$. We extend this action linearly to give $E$ a structure of $\mathbb{C}[G]$-module. Now consider the endomorphism defined by $T := \sum\limits_{g \in G} a_g g$. We have
\begin{align*}
T(\delta_\rho) & = \sum_{\substack{g \in G \\ \rho g^{-1} \in R}} a_g \delta_\rho^g + \sum_{\substack{g \in G \\ \rho g^{-1} \not\in R}} a_g \delta_\rho^g = \sum_{\substack{g \in G \\ \rho g^{-1} \in R}} a_g \delta_{\rho g^{-1}} - \sum_{\substack{g \in G \\ \rho g^{-1} \not\in R}} a_g \delta_{\tau\rho g^{-1}}. \\
\intertext{We write $\lambda = \rho g^{-1}$ in the first sum and $\lambda = \tau\rho g^{-1}$ in the second one. We get}
T(\delta_\rho) & =  \sum_{\lambda \in R} a_{\rho\lambda^{-1}} \delta_{\lambda} - \sum_{\lambda \in R} a_{\tau\rho\lambda^{-1}} \delta_{\lambda} = 2 \sum_{\lambda \in R} a_{\rho\lambda^{-1}} \delta_{\lambda}.
\end{align*}
Therefore the determinant of $T$ is $2^m \det(a_{\rho\lambda^{-1}})_{\rho, \lambda \in R}$. On the other hand, for $\chi$ odd, we compute 
\begin{align*}
T(\chi) = \sum_{g \in G} a_g \chi^g = \sum_{g \in G} a_g \chi(g) \chi.
\end{align*}
Thus $\chi$ is an eigenvector for $T$ with eigenvalue $\sum\limits_{g \in G} a_g \chi(g) = 2\sum\limits_{\rho \in R}  \chi(\rho) a_\rho$. Therefore $\det(T) = 2^m \prod\limits_{\chi \text{ odd}} \sum\limits_{\rho \in R} \chi(\rho) a_\rho$ and the result follows.
\end{proof}

By the lemma, we get
\begin{align}\label{eq:eq2}
\det(\log |\varepsilon^{\rho\lambda^{-1}}|)_{\rho,\lambda \in R} & = \prod_{\chi \text{ odd}} \sum_{\rho \in R} \chi(\rho)  \log |\varepsilon^\rho| = \frac{1}{2} \prod_{\chi \text{ odd}} \sum_{g \in G} \chi(g)  \log |\varepsilon^g| \notag \\
& = \prod_{\chi \text{ odd}} L'_{K/k,S}(0, \chi)
\end{align}
using part~(1) for the last equality and the fact that the number of roots of unity in $K$ is  $2$ since $K$ is not totally complex by (A1). On the other hand, we have
\begin{equation}\label{eq:deczet}
\prod_{\chi \text{ odd}} L_{K/k,S}(s, \chi)  = \frac{\zeta_{S,K}(s)}{\zeta_{S,K^+}(s)} 
\end{equation}
where $\zeta_{S,K}(s) := \zeta_{S_K, K}(s)$ and $\zeta_{S,K^+}(s) := \zeta_{S_{K^+}, K^+}(s)$ denote respectively the Dedekind zeta functions of $K$ and $K^+$ with the Euler factors at primes in $S_K$ and $S_{K^+}$ removed. Here $S_K$ and $S_{K^+}$ denote respectively the set of places of $K$ and of $K^+$ above the places in $S$. We will often use by abuse the subscript $S$ instead of $S_K$ or $S_{K^+}$ to simplify the notation. Taking the limit when $s \to 0$ in \eqref{eq:deczet} and using the expression for the Taylor development at $s = 0$ of Dedekind zeta functions, see \cite[Cor.~I.1.2]{tate:book}, we get
\begin{equation}\label{eq:eq3}
\prod_{\chi \text{ odd}} L'_{K/k,S}(0, \chi) = 2^{t_S} \frac{h_K R_K}{h_K^+ R_K^+}
\end{equation}
where $t_S$ is the number of prime ideals in $S_{K^+}$ that are inert in $K/K^+$ and $h_K$, $R_K$, $h_{K^+}$ and $R_{K^+}$ are respectively the class numbers and regulators of $K$ and $K^+$. Putting together equations \eqref{eq:eq1}, \eqref{eq:eq2} and \eqref{eq:eq3}, we get the following result. 
\begin{theorem}\label{th:globindex}
The index of $U_{\rm Stark}$ in the group of units of $K$ is 
\begin{equation*}
(U_K: U_{\rm Stark}) = 2^{t_S+dm-1} \frac{h_K}{h_{K^+}}
\end{equation*} 
where $t_S$ is the number of prime ideals in $S_{K^+}$ that are inert in $K/K^+$. \qed
\end{theorem}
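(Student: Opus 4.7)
The plan is to combine the three computations already set up in the discussion preceding the statement. Concretely, equation \eqref{eq:eq1} expresses $\textrm{Reg}(U_{\rm Stark})$ as a product of two determinants, the second of which is identified with $2^{dm-1} R_{K^+}$; equation \eqref{eq:eq2}, obtained from Lemma~\ref{lem:detgroup} together with part~(1) of the Stark conjecture, evaluates the first determinant as $\prod_{\chi \text{ odd}} L'_{K/k,S}(0,\chi)$; and equation \eqref{eq:eq3}, obtained from the factorization \eqref{eq:deczet} of the quotient of Dedekind zeta functions together with the standard class-number formula at $s=0$, rewrites this $L$-value product as $2^{t_S} h_K R_K / (h_{K^+} R_{K^+})$. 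All the hard analytic and algebraic work is already encapsulated in these three identities, so what remains is purely formal.

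Before chaining them together, I would first check that $U_{\rm Stark}$ is of finite index in $U_K$, i.e., that its $\mathbb{Z}$-rank equals $\rk(U_K) = (dm-1)+m$. The generating set $\{\pm 1,\eta_1,\dots,\eta_{dm-1},\varepsilon_1,\dots,\varepsilon_m\}$ has cardinality one more than this rank, so finite index is equivalent to nonvanishing of the regulator computed by the matrix above \eqref{eq:eq1}. But the right-hand side of \eqref{eq:eq3} is nonzero (being a ratio of class-number/regulator data), so $\textrm{Reg}(U_{\rm Stark})\neq 0$ and full rank follows automatically from the chain of equalities to be proved.

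Granted this, I would multiply \eqref{eq:eq1}, \eqref{eq:eq2} and \eqref{eq:eq3} together to obtain
\begin{equation*}
\textrm{Reg}(U_{\rm Stark}) \;=\; |\det(\log|\varepsilon_\ell|_j)_{j,\ell}| \cdot 2^{dm-1} R_{K^+} \;=\; 2^{t_S+dm-1}\,\frac{h_K R_K}{h_{K^+}},
\end{equation*}
and then invoke the classical identity $\textrm{Reg}(U_{\rm Stark}) = (U_K:U_{\rm Stark})\cdot R_K$ for a finite-index subgroup of the unit group. Dividing through by $R_K$ yields the stated formula. Since every ingredient is already at hand, there is no real obstacle; the only point requiring a little care is to track the factors of $2$ correctly (the $2^{dm-1}$ coming from the passage between absolute values on $K$ and on $K^+$ in the bottom block of the regulator matrix, and the $2^{t_S}$ coming from the Euler factors at primes of $K^+$ that are inert in $K/K^+$ in the factorization \eqref{eq:deczet}).
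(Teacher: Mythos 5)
Your proposal is correct and follows exactly the paper's own argument: the theorem is stated with a \qed because its proof is precisely the chain of identities \eqref{eq:eq1}, \eqref{eq:eq2}, \eqref{eq:eq3} in the preceding paragraphs, combined via $\textrm{Reg}(U_{\rm Stark}) = (U_K : U_{\rm Stark})\,R_K$. Your added remark that finite index follows automatically from the nonvanishing of the regulator (guaranteed by the right-hand side of \eqref{eq:eq3}) is a small but sound clarification that the paper leaves implicit.
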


Let $\Cl_K$ and $\Cl_{K^+}$ denote respectively the class groups of $K$ and $K^+$. Define $\Cl^-_K$ and $U^-_K$ as the kernel of the following maps induced by the norm $\mathcal{N} := 1 + \tau$  of the extension $K/K^+$
\begin{equation*}
\Cl_K^- := \Ker(\mathcal{N} : \Cl_K \to \Cl_{K^+})
\quad\text{and}\quad
U_K^- := \Ker(\mathcal{N} : \bar{U}_K \to \bar{U}_{K^+})
\end{equation*}
where $\bar{U}_K$ and $\bar{U}_{K^+}$ are respectively $U_K/\{\pm 1\}$ and $U_{K^+}/\{\pm 1\}$. From now on, we use the additive notation to denote the action of $\mathbb{Z}[G]$, and other group rings, on $\bar{U}_K$ and its subgroups $U_K^-, \bar{U}_{K^+}, \dots$. For $x \in U_K$, we denote by $\bar{x}$ its class in $\bar{U}_K$ and adopt the following convention: if $\bar{x} \in \bar{U}_K$, we let $x$ denote the unique element in the class $\bar{x}$ such that $x > 0$. Note that $\mathcal{N}(x) = \mathcal{N}(-x) = 1$ since $K/K^+$ is ramified at at least one real place.

\begin{theorem}\label{th:main}
We have 
\begin{equation*}
\big(U^-_K : \mathbb{Z}[G] \cdot \bar\varepsilon\big) = 2^{e+t_S} |\Cl_K^-|
\end{equation*}
where $2^e = (\bar{U}_{K^+}:\mathcal{N}(\bar{U}_K))$. 
\end{theorem}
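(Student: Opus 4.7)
The plan is to deduce the relative index formula from the global one in Theorem~\ref{th:globindex} by applying the norm $\mathcal{N}=1+\tau$ to isolate the ``minus part'' of $\bar{U}_K$ (and of $\bar{U}_{\rm Stark}$). Since $\pm 1 \in U_{\rm Stark}$, the global index rewrites as $(\bar{U}_K : \bar{U}_{\rm Stark}) = (U_K : U_{\rm Stark}) = 2^{t_S+dm-1}\, h_K/h_{K^+}$.

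From $\tau\bar\varepsilon = -\bar\varepsilon$ we get $\mathbb{Z}[G]\bar\varepsilon \subseteq U^-_K$ with $\mathcal{N}$ vanishing on it, whereas $\mathcal{N}\bar u = 2\bar u$ for $\bar u \in \bar{U}_{K^+}$. Thus $\mathcal{N}(\bar{U}_{\rm Stark}) = 2\bar{U}_{K^+}$, and any $\bar u + x\bar\varepsilon \in \bar{U}_{\rm Stark} \cap U^-_K$ (with $\bar u \in \bar{U}_{K^+}$ and $x\in\mathbb{Z}[G]$) satisfies $2\bar u = 0$ in the torsion-free group $\bar{U}_{K^+}$, so $\bar{U}_{\rm Stark} \cap U^-_K = \mathbb{Z}[G]\bar\varepsilon$. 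These data assemble into a commutative diagram of short exact sequences
$$
\xymatrix{
0 \ar[r] & \mathbb{Z}[G]\bar\varepsilon \ar[r] \ar[d] & \bar{U}_{\rm Stark} \ar[r]^-{\mathcal{N}} \ar[d] & 2\bar{U}_{K^+} \ar[r] \ar[d] & 0 \\
0 \ar[r] & U^-_K \ar[r] & \bar{U}_K \ar[r]^-{\mathcal{N}} & \mathcal{N}(\bar{U}_K) \ar[r] & 0
}
$$
with vertical inclusions, yielding $(\bar{U}_K : \bar{U}_{\rm Stark}) = (U^-_K : \mathbb{Z}[G]\bar\varepsilon) \cdot (\mathcal{N}(\bar{U}_K) : 2\bar{U}_{K^+})$. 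Since $\bar{U}_{K^+}$ is free of rank $dm-1$ and $(\bar{U}_{K^+}:\mathcal{N}(\bar{U}_K))=2^e$, the last factor equals $2^{dm-1-e}$, and rearranging gives
$$
(U^-_K:\mathbb{Z}[G]\bar\varepsilon) \;=\; 2^{t_S+e}\, \frac{h_K}{h_{K^+}}.
$$

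The remaining and main step is to identify $h_K/h_{K^+}$ with $|\Cl^-_K|$. This requires the surjectivity of the norm $\mathcal{N}:\Cl_K\to\Cl_{K^+}$, which by class field theory is equivalent to $K\not\subseteq H(K^+)$. In our setting $K/K^+$ is ramified at every infinite place of $K^+$ not above $v$ (there are $m(d-1)\ge 1$ such places since $d\ge 2$), so $K$ does not embed into the narrow Hilbert class field; the delicate bookkeeping is to reconcile this with the convention used for $\Cl_K$ and to check that any Hasse-type unit-index discrepancy is already absorbed in the $2^e$ factor. This last class-field-theoretic identification is where I expect the real work to lie; everything preceding it is elementary manipulation of the exact sequences induced by $\mathcal{N}$.
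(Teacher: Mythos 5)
Your proposal follows essentially the same route as the paper: apply Theorem~\ref{th:globindex}, compute the kernel $\mathbb{Z}[G]\cdot\bar\varepsilon$ and image $2\cdot\bar{U}_{K^+}$ of $\mathcal{N}$ on $\bar{U}_{\mathrm{Stark}}$, factor the global index accordingly, and simplify via $(\bar{U}_{K^+}:2\bar{U}_{K^+})=2^{dm-1}$. The only place you hesitate is the identification $h_K/h_{K^+}=|\Cl_K^-|$, but this is in fact the one-line observation the paper opens with, and your worry is unfounded: the norm $\mathcal{N}:\Cl_K\to\Cl_{K^+}$ is surjective iff $K\cap H_{K^+}=K^+$ where $H_{K^+}$ is the \emph{ordinary} Hilbert class field (not the narrow one --- since $\Cl_{K^+}$ is the ordinary class group, and your claim about the narrow class field can actually fail when $K/K^+$ is unramified at finite places), and this holds because $K/K^+$ ramifies at the $m(d-1)\geq 1$ infinite places of $K^+$ not above $v$ while $H_{K^+}/K^+$ is unramified everywhere; there is no residual ``Hasse-type'' unit-index correction to account for.
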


\begin{proof}
By class field theory the map $\mathcal{N} : \Cl_K \to \Cl_{K^+}$ is surjective. Therefore $|\Cl_K^-| = h_K/h_{K^+}$. On the other hand, if we let $\bar{U}_{\rm Stark} := U_{\rm Stark}/\{\pm 1\}$, we have 
\begin{equation*}
\Ker \left(\mathcal{N} : \bar{U}_\mathrm{Stark} \to \bar{U}_{K^+}\right) = \mathbb{Z}[G] \cdot \bar\varepsilon
\quad\text{and}\quad
\mathrm{Im} \left(\mathcal{N} : \bar{U}_\mathrm{Stark} \to \bar{U}_{K^+}\right) = 2 \cdot \bar{U}_{K^+}.
\end{equation*}
Therefore we get
\begin{equation*}
(\bar{U}_K: \bar{U}_{\rm Stark}) = (\mathcal{N}(\bar{U}_K) :  2 \cdot \bar{U}_{K^+}) \, \big(U_K^- : \mathbb{Z}[G] \cdot \bar\varepsilon\big).
\end{equation*}
Since $(\bar{U}_K: \bar{U}_{\rm Stark}) = (U_K: U_{\rm Stark})$, it follows from Theorem~\ref{th:globindex} that
\begin{equation*}
\big(U_K^- : \mathbb{Z}[G] \cdot \bar\varepsilon\big) = \frac{2^{t_S+dm-1} |\Cl_K^-|}{\big(\mathcal{N}(\bar{U}_K) : 2 \cdot \bar{U}_{K^+}\big)}.
\end{equation*}
We conclude by noting that
\begin{equation*}
\big(\mathcal{N}(\bar{U}_K) : 2 \cdot \bar{U}_{K^+}\big) = \frac{\big(\bar{U}_{K^+} : 2 \cdot \bar{U}_{K^+}\big)}{\big(\bar{U}_{K^+}: \mathcal{N}(\bar{U}_K) \big)} = \frac{2^{dm-1}}{\big(\bar{U}_{K^+}: \mathcal{N}(\bar{U}_K) \big)}.\qedhere
\end{equation*}
\end{proof}

It has been observed that the Stark unit is quite often a square. The theorem provides us with a necessary condition for that to happen.

\begin{corollary}\label{cor:whensquare}
Let $c$ be the $2$-valuation of the order of $\Cl_K^-$. A necessary condition for the Stark unit $\varepsilon$ to be a square in $K$ is  
\begin{equation*}
e+t_S+c \geq m. 
\end{equation*}
\end{corollary}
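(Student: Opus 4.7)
The plan is to exploit Theorem~\ref{th:main} together with the observation that squaring an element of $U_K^-$ forces the $\mathbb{Z}[G]$-submodule generated by it to land inside $2 \cdot U_K^-$, which in turn forces $2^m$ to divide the index.

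First, suppose $\varepsilon = \eta^2$ for some $\eta \in U_K$; then in additive notation on $\bar{U}_K$ we have $\bar\varepsilon = 2\bar\eta$. I would verify that $\bar\eta$ lies in $U_K^-$: since $\mathcal{N}(\bar\varepsilon) = 0$ in the torsion-free group $\bar{U}_{K^+}$ and $\mathcal{N}(\bar\varepsilon) = 2\,\mathcal{N}(\bar\eta)$, we conclude $\mathcal{N}(\bar\eta) = 0$. Consequently, $\mathbb{Z}[G] \cdot \bar\varepsilon = 2\,\mathbb{Z}[G] \cdot \bar\eta \subseteq 2 \cdot U_K^-$.

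Next, I would compute the $\mathbb{Z}$-rank of $U_K^-$. From the signature computation in the preceding section, $\bar{U}_K$ has rank $dm+m-1$ and $\bar{U}_{K^+}$ has rank $dm-1$; since the norm map $\mathcal{N} : \bar{U}_K \to \bar{U}_{K^+}$ has image of finite index $2^e$, it is surjective after tensoring with $\mathbb{Q}$, so $U_K^- = \Ker \mathcal{N}$ is a free abelian group of rank $m$. In particular $(U_K^- : 2 \cdot U_K^-) = 2^m$.

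Combining the two preceding steps yields $2^m \mid (U_K^- : \mathbb{Z}[G] \cdot \bar\varepsilon)$. By Theorem~\ref{th:main} this index equals $2^{e+t_S} |\Cl_K^-|$, so writing $|\Cl_K^-| = 2^c \cdot u$ with $u$ odd gives $m \leq e + t_S + c$, as required. There is no real obstacle here: the only conceptual point is to notice that the square-root $\eta$ automatically sits in $U_K^-$ because $\bar{U}_{K^+}$ has no $2$-torsion, and then the index is forced to absorb the factor $2^m = (U_K^- : 2U_K^-)$.
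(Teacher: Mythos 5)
Your proof is correct and follows essentially the same idea as the paper: once $\bar\eta$ is shown to lie in $U_K^-$, the containment of $\mathbb{Z}[G]\cdot\bar\varepsilon$ in a rank-$m$ free group forces the factor $2^m$ into the index of Theorem~\ref{th:main}. The only cosmetic difference is that you sandwich $\mathbb{Z}[G]\cdot\bar\varepsilon$ inside $2U_K^-\subseteq U_K^-$, whereas the paper sandwiches it inside $\mathbb{Z}[G]\cdot\bar\eta\subseteq U_K^-$; both yield the same divisibility.
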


\begin{proof}
Assume that $\varepsilon = \eta^2$ with $\eta \in K$. Then it is easy to see that $\eta \in U_K^-$ and therefore 
$\big(\mathbb{Z}[G] \cdot \bar\varepsilon : \mathbb{Z}[G] \cdot \bar\eta\big) = 2^m$ divides $2^{e+t_S} |\Cl_K^-|$.
\end{proof}

We will see below, see \eqref{eq:bounde}, that $e \geq (d-1)m-2$. Therefore the inequality in the theorem is always satisfied for $d \geq 2 + 2/m$. However, this is not enough to ensure that the Stark unit is a square in general. Indeed at the end of the paper we give an example of a cyclic sextic extension $K/k$ satisfying (A1), (A2) and (A3), and with $k$ a totally real cubic field where the Stark unit, assuming it exists, is not a square even though $e > m$. But, in all the cases that we study, we can prove that for $d$ sufficiently large the Stark unit is always a square. Of course theses cases are quite specific and it is difficult to draw from them general conclusions, but still we are lead to ask the following question. 

\begin{question}
Fix a relative degree $m$. Does there exist a constant $D(m)$, depending only on $m$, such that for any extensions $K/k$ of degree $2m$ and any finite set of places $S$ containing $S(K/k)$ satisfying (A1), (A2) and (A3), and with $d \geq D$, the corresponding Stark unit, assuming that it exists, is always a square in $K$?
\end{question}

\section{Rubin's index formula}

In \cite{rubin:index}, Rubin proves Gras conjecture type results for Stark units using Euler systems. His results are generalized by Popescu \cite{popescu:survey}. In this section, we use the results of Rubin to get a similar result in our settings. To be able to use Rubin's results we need to make the following additional assumption:
\begin{enumerate}[label=(A\arabic{*})]
 \setcounter{enumi}{3}
\item $K$ contains the Hilbert Class Field $H_k$ of $k$.
\end{enumerate}

\smallskip

\noindent\textbf{We assume in this section that the conjecture is true for the extensions and set of places as described in \cite{rubin:index}.}

\smallskip

We first introduce the results of Rubin. Let $\mathfrak{f}$ be the conductor of $K/k$. For any modulus $\mathfrak{g}$ dividing $\mathfrak{f}$, let $K_\mathfrak{g} = K \cap k(\mathfrak{g})$ be the intersection of $K$ with the ray class field of $k$ of conductor $\mathfrak{g}$. Since $v$ is totally split in $K/k$, one can apply the conjecture to the extension $K_\mathfrak{g}/k$, the set of places $S(K_\mathfrak{g}/k)$ and the place $v$, and get a Stark unit that we denote by $\varepsilon_\mathfrak{g}$. Let $G_\mathfrak{g}$ be the Galois group of $K_\mathfrak{g}/k$. Note that by (A1) the group of roots of unity in $K_\mathfrak{g}$ is $\{\pm 1\}$. Part~(2) of the conjecture is equivalent to the fact that $\varepsilon_\mathfrak{g}^{g - 1} \in U_{K_\mathfrak{g}}^2$ for all $g \in G_\mathfrak{g}$, see \cite[Prop.~IV.1.2]{tate:book}. Define $R_{\rm Stark}$ as the following $\mathbb{Z}[G]$-module
\begin{equation*}
R_{\rm Stark} = \langle \pm 1,\ (\varepsilon_\mathfrak{g}^{g - 1})^{1/2} \text{ for } \mathfrak{g} \mid \mathfrak{f} \text{ and } g \in G_\mathfrak{g} \rangle_{\mathbb{Z}[G]}.
\end{equation*}
Let $p$ be a prime number that does not divide the order of $G$. In particular, $p$ is an odd prime. Denote by $\hat{G}_p$ the set of irreducible $\mathbb{Z}_p$-characters of $G$. For $\psi \in \hat{G}_p$ and $M$ a $\mathbb{Z}[G]$-module, we set
\begin{equation*}
M^\psi := M \otimes_{\mathbb{Z}[G]} \mathbb{Z}_p[\psi]
\end{equation*}
where $\mathbb{Z}_p[\psi]$ is the ring generated over $\mathbb{Z}_p$ by the values of $\psi$ and $G$ acts on $\mathbb{Z}_p[\psi]$ via the character $\psi$. The following result is a direct consequence of Theorem~4.6 of \cite{rubin:index}. 

\begin{theorem}[\sc Rubin]
If $\psi \in \hat{G}_p$ is odd then 
\begin{equation*}
\left|(U_K/R_{\rm Stark})^\psi\right| = \left|\Cl_K^\psi\right|.
\end{equation*}
\end{theorem}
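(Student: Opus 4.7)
The plan is to deduce the theorem directly from Theorem~4.6 of \cite{rubin:index}. Rubin's theorem, proved by Kolyvagin's Euler system techniques, asserts that for an abelian extension $K/k$ satisfying appropriate hypotheses and for every odd $\mathbb{Z}_p$-irreducible character $\psi$ of $G$ with $p \nmid |G|$, one has an equality of orders
\begin{equation*}
\bigl|(U_K/\mathcal{S})^\psi\bigr| = \bigl|\Cl_K^\psi\bigr|,
\end{equation*}
where $\mathcal{S}$ is a module of Stark units assembled from all the intermediate extensions $K_\mathfrak{g}/k$ for $\mathfrak{g} \mid \mathfrak{f}$. The task is therefore twofold: first, to check that our setting satisfies Rubin's hypotheses; second, to identify $R_{\rm Stark}$ with Rubin's Stark module at the level of $\psi$-components.

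For the hypothesis check, assumptions (A1)--(A3) propagate to every intermediate extension $K_\mathfrak{g}/k$: the place $v$ remains totally split, $k$ remains totally real, and the Stark unit $\varepsilon_\mathfrak{g}$ is a genuine unit contributing to the minus part under the action of complex conjugation $\tau$. Assumption (A4), namely $K \supseteq H_k$, is the essential new input: it guarantees that every $K_\mathfrak{g}$ already contains $H_k$, so the Stark units $\{\varepsilon_\mathfrak{g}\}_{\mathfrak{g} \mid \mathfrak{f}}$ fit directly into the coherent family of Euler-system classes in Rubin's construction without having to adjoin extra units from $H_k \cdot K_\mathfrak{g}$. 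For the identification of modules, part~(2) of the abelian rank one conjecture, equivalently \cite[Prop.~IV.1.2]{tate:book}, guarantees that each $\varepsilon_\mathfrak{g}^{g-1}$ is a square in $U_{K_\mathfrak{g}}$, so the half-powers appearing in the definition of $R_{\rm Stark}$ are legitimate units; after restricting to the minus part carved out by an odd character they correspond precisely to Rubin's Stark elements.

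Finally, since $p \nmid |G|$ the group ring $\mathbb{Z}_p[G]$ is semisimple and decomposes as $\prod_{\psi' \in \hat{G}_p} \mathbb{Z}_p[\psi']$. Consequently the functor $M \mapsto M^\psi = M \otimes_{\mathbb{Z}[G]} \mathbb{Z}_p[\psi]$ is exact on finite $\mathbb{Z}[G]$-modules and literally picks out the $\psi$-isotypic component, so Rubin's equality can be read off one character at a time. The main obstacle I anticipate is the careful bookkeeping of normalizations between the two set-ups: the factors of $2$ built into $R_{\rm Stark}$ via the square roots $(\varepsilon_\mathfrak{g}^{g-1})^{1/2}$, the sign convention coming from the positivity choice $\varepsilon > 0$, and the harmless torsion $\{\pm 1\}$ must all be checked to disappear on tensoring with $\mathbb{Z}_p[\psi]$ for $\psi$ odd and $p$ odd. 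Once this dictionary between our $R_{\rm Stark}$ and Rubin's module $\mathcal{S}$ is established, the theorem follows immediately by specialization.
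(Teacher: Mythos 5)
Your proposal matches the paper exactly in spirit: the paper gives no proof at all, simply stating the result is ``a direct consequence of Theorem~4.6 of \cite{rubin:index}'' after introducing assumption (A4) precisely so that Rubin's hypotheses apply. Your elaboration of why the citation works --- checking (A1)--(A4) propagate to the subextensions $K_\mathfrak{g}/k$, matching $R_{\rm Stark}$ with Rubin's Stark module via the square roots of \cite[Prop.~IV.1.2]{tate:book}, and using semisimplicity of $\mathbb{Z}_p[G]$ for $p\nmid |G|$ --- is the intended, if unwritten, content behind that citation.
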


From this we deduce an analogue statement for our case.
\begin{theorem}\label{th:localindex}
For all $\psi \in \hat{G}_p$, we have
\begin{equation*}
\left|(U_K^-/\mathbb{Z}[G] \cdot \bar\varepsilon)^\psi\right| = \left|(\Cl_K^-)^\psi\right|. 
\end{equation*}
\end{theorem}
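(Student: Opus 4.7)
The plan is to derive Theorem~\ref{th:localindex} from Rubin's theorem together with the short exact sequences used in the proof of Theorem~\ref{th:main}, working one $\psi$-component at a time. Since $\tau\in G$ has order $2$ and $p\nmid|G|$, the prime $p$ is odd and $\mathbb{Z}_p[G]\cong\prod_\psi\mathbb{Z}_p[\psi]$, so $M\mapsto M^\psi$ is an exact functor on $\mathbb{Z}[G]$-modules.

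For $\psi$ even one has $\psi(\tau)=1$. On $U_K^-$ the identity $(1+\tau)x=0$ says that $\tau$ acts as $-1$, hence $(1+\psi(\tau))(x\otimes 1)=2x\otimes 1=0$, and since $2$ is a unit in $\mathbb{Z}_p[\psi]$, $(U_K^-)^\psi=0$; the same reasoning gives $(\Cl_K^-)^\psi=0$, so the identity is trivial. For $\psi$ odd, applying $(-)^\psi$ to the exact sequences
\begin{equation*}
0\to U_K^-\to \bar U_K\xrightarrow{\mathcal{N}}\mathcal{N}(\bar U_K)\to 0
\quad\text{and}\quad
0\to \Cl_K^-\to \Cl_K\xrightarrow{\mathcal{N}}\Cl_{K^+}\to 0
\end{equation*}
(the surjectivity of $\mathcal{N}$ on class groups was noted in the proof of Theorem~\ref{th:main}) yields the isomorphisms $(U_K^-)^\psi\cong(\bar U_K)^\psi\cong(U_K)^\psi$ and $(\Cl_K^-)^\psi\cong(\Cl_K)^\psi$, because both $\mathcal{N}(\bar U_K)\subseteq\bar U_{K^+}$ and $\Cl_{K^+}$ carry trivial $\tau$-action and so vanish at odd $\psi$ by the same $2$-is-a-unit argument applied to $1-\psi(\tau)$.

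The remaining, and main, point is the equality $(\mathbb{Z}[G]\cdot\bar\varepsilon)^\psi=(\bar R_{\rm Stark})^\psi$ inside $(\bar U_K)^\psi$, where $\bar R_{\rm Stark}$ denotes the image of $R_{\rm Stark}$ in $\bar U_K$. The inclusion $\mathbb{Z}[G]\cdot\bar\varepsilon\subseteq\bar R_{\rm Stark}$ is immediate from $\varepsilon^{\tau-1}=\varepsilon^{-2}$, which gives $\bar\varepsilon^{-1}=(\bar\varepsilon^{\tau-1})^{1/2}\in\bar R_{\rm Stark}$. To obtain the reverse inclusion at the $\psi$-level, I would treat a generator $(\bar\varepsilon_\mathfrak{g}^{g-1})^{1/2}$ of $\bar R_{\rm Stark}$ case by case: when $\mathfrak{g}=\mathfrak{f}$, dividing $(g-1)\bar\varepsilon\otimes 1$ by the unit $2$ places the element in $(\mathbb{Z}[G]\cdot\bar\varepsilon)^\psi$; when $\psi|_{\Gal(K/K_\mathfrak{g})}$ is non-trivial, some $\sigma$ fixing $\bar\varepsilon_\mathfrak{g}$ has $1-\psi(\sigma)$ a unit in $\mathbb{Z}_p[\psi]$, forcing $\bar\varepsilon_\mathfrak{g}\otimes 1=0$; and when $\psi$ factors through $G_\mathfrak{g}$, one uses the standard norm-compatibility
\begin{equation*}
N_{K/K_\mathfrak{g}}(\varepsilon)=\prod_{\mathfrak{p}\in S(K/k)\setminus S(K_\mathfrak{g}/k)}(1-\sigma_\mathfrak{p}^{-1})\cdot\varepsilon_\mathfrak{g}
\end{equation*}
to write $[K:K_\mathfrak{g}]\,\bar\varepsilon\otimes 1=\bar\varepsilon_\mathfrak{g}\otimes\prod_\mathfrak{p}(1-\psi(\sigma_\mathfrak{p})^{-1})$. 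The hard part is the treatment of the exceptional primes $\mathfrak{p}$ for which $\psi(\sigma_\mathfrak{p})=1$ and the corresponding Euler factor fails to be a unit in $\mathbb{Z}_p[\psi]$; this is precisely the delicate point handled by Rubin's Euler-system argument, and one should be able to transfer what is needed from it by an induction on the depth of $\mathfrak{g}$ relative to the conductor of $\psi$. Once the equality is established, the theorem follows from the chain
\begin{equation*}
\bigl|(U_K^-/\mathbb{Z}[G]\cdot\bar\varepsilon)^\psi\bigr|
=\bigl|(\bar U_K/\bar R_{\rm Stark})^\psi\bigr|
=\bigl|(U_K/R_{\rm Stark})^\psi\bigr|
=\bigl|\Cl_K^\psi\bigr|
=\bigl|(\Cl_K^-)^\psi\bigr|,
\end{equation*}
the central equality being Rubin's theorem.
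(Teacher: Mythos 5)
Your proposal hinges on establishing the equality $(\mathbb{Z}[G]\cdot\bar\varepsilon)^\psi=(\bar R_{\rm Stark})^\psi$, and you correctly identify that the reverse inclusion is the crux; but you then concede that the exceptional-Euler-factor case ``is precisely the delicate point handled by Rubin's Euler-system argument'' and that you would need to ``transfer what is needed from it.'' That admission is exactly where the argument has a genuine gap: as written, you have an inclusion, not an equality, and no mechanism to close it. The paper does not attempt the reverse inclusion at all. It proves only a one-sided containment, namely $\varepsilon_0^{2\mathbb{Z}[G]}\subset R_{\rm Stark}^{\tau-1}\subset U_K^{\tau-1}$ (where $\varepsilon_0$ is the Stark unit for $S(K/k)$ rather than for $S$), which after applying $(-)^\psi$ yields an \emph{inequality} $\bigl|(U_K^-/\mathbb{Z}[G]\cdot\bar\varepsilon)^\psi\bigr|\geq\bigl|(\Cl_K^-)^\psi\bigr|$ for each odd $\psi$. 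The crucial missing idea is then to take the product over all $\psi\in\hat G_p$ and invoke Theorem~\ref{th:globindex}/Theorem~\ref{th:main}: a strict inequality for a single $\psi$ would make the global $p$-part too large, contradicting the already-established global index formula. That global counting step is what upgrades the inequality to equality, and it is absent from your plan.

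There is also a secondary issue you should tidy up. You write $\bar\varepsilon^{-1}=(\bar\varepsilon^{\tau-1})^{1/2}\in\bar R_{\rm Stark}$ as if $\varepsilon$ were the generator $\varepsilon_\mathfrak{f}$ of $R_{\rm Stark}$, but $\varepsilon=\varepsilon_{K/k,S}$ and $\varepsilon_\mathfrak{f}=\varepsilon_0=\varepsilon_{K/k,S(K/k)}$ differ by the Euler factors $1-\Frob_\mathfrak{p}(K/k)$ for $\mathfrak{p}\in S\setminus S(K/k)$. The paper handles this first, using assumption (A3) to show these factors are $p$-adic units so that $(\mathbb{Z}[G]\cdot\bar\varepsilon)^\psi=(\mathbb{Z}[G]\cdot\bar\varepsilon_0)^\psi$; without that reduction even your forward inclusion is off by those factors. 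In summary: keep your clean dispatch of the even-$\psi$ case and the identifications $(U_K^-)^\psi\cong(U_K)^\psi$, $(\Cl_K^-)^\psi\cong(\Cl_K)^\psi$, but replace the attempted direct equality of $\psi$-components by the one-sided containment plus the product-over-characters argument against Theorem~\ref{th:main}.
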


\begin{proof}
For $M$ a $\mathbb{Z}[G]$-module and $\psi \in \hat{G}_p$, it is direct to see that $M^\psi = (M^{1+\tau})^\psi$ if $\psi$ is even and $M^\psi = (M^{1-\tau})^\psi$ if $\psi$ is odd. In particular, if $\psi$ is even, we get $|(U_K^-/\mathbb{Z}[G] \cdot \bar\varepsilon)^\psi| = |(\Cl_K^-)^\psi| = 1$ and the result follows trivially in that case. Assume now that $\psi$ is odd. Let $\varepsilon_0$ be the Stark unit corresponding to the extension $K/k$, the set of places $S(K/k)$ and the distinguished place $v$. Assume first that $S = S(K/k) \cup \{\mathfrak{p}\}$ for some finite prime ideal $\mathfrak{p}$ of $k$ not in $S(K/k)$. It follows from \cite[Prop.~IV.3.4]{tate:book} that $\bar\varepsilon = (1-\Frob_\mathfrak{p}(K/k)) \cdot \bar\varepsilon_0$ where $\Frob_\mathfrak{p}(K/k)$ is the Frobenius at $\mathfrak{p}$ for the extension $K/k$. By (A3), $\tau$ is a power of $\Frob_\mathfrak{p}(K/k)$ and thus $\psi(\Frob_\mathfrak{p}(K/k))$ is a non trivial root of unity of order dividing $|G|$. Then $1 - \psi(F_\mathfrak{p}(K/k))$ is a $p$-adic unit and therefore $(\mathbb{Z}[G] \cdot \bar\varepsilon)^\psi = (\mathbb{Z}[G] \cdot \bar\varepsilon_0)^\psi$. By repeating this argument if necessary, we see that this last equality also holds in the general case. Now, by taking $\mathfrak{g} = \mathfrak{f}$ and $\sigma = \tau$ in the definition of $R_{\rm Stark}$, we see that $\varepsilon_0^{(\tau-1)/2} = \varepsilon_0^{-1} \in R_{\rm Stark}$. Therefore we have $\varepsilon_0^{\mathbb{Z}[G]} \subset R_{\rm Stark} \subset U_K$, and thus
\begin{equation*}
\varepsilon_0^{2\mathbb{Z}[G]} \subset R_{\rm Stark}^{\tau - 1} \subset U_K^{\tau - 1}. 
\end{equation*}
We take the $\psi$-component, by the above remarks and the theorem, we get
\begin{multline*}
|(U_K^-/\mathbb{Z}[G] \cdot \bar\varepsilon)^\psi| = |(U_K^-/\mathbb{Z}[G] \cdot \bar\varepsilon_0)^\psi| = |(U_K^{\tau - 1}/\varepsilon_0^{2\mathbb{Z}[G]})^\psi| \geq \\
|(U_K^{\tau-1}/R_{\rm Stark}^{\tau-1})^\psi| = |(U_K/R_{\rm Stark})^\psi|
= |\Cl_K^\psi| = |(\Cl_K^-)^\psi|.
\end{multline*}
Assume there exists a character $\psi$ for which this is a strict inequality. Multiplying over all characters in $\hat{G}_p$, we get $|(U_K^-/\mathbb{Z}[G] \cdot \bar\varepsilon) \otimes \mathbb{Z}_p| > |\Cl_K^- \otimes \mathbb{Z}_p|$, a contradiction with Theorem~\ref{th:main}. Therefore the equality holds for all $\psi \in \hat{G}_p$ and the theorem is proved.
\end{proof}

\section{The index property}

\noindent\textbf{From now on, we do not assume any more that the conjecture is true.}

\smallskip

From the results of the previous sections, we see that the conjecture implies that there exists a unit $\bar\varepsilon \in U_K^-$ such that\footnote{Although assumptions (A1) to (A4) are necessary to prove that the Stark unit is a solution of (P2), it is not necessary to assume (A4) to prove that solutions exist in the cases that we study below. It is an interesting question whether or not one could prove that the Stark unit is a solution to (P2) without having first to assume (A4).} 
\begin{enumerate}[label=\textup{(P\arabic{*})}]
\item $\left(U_K^- : \mathbb{Z}[G] \cdot \bar\varepsilon\right) = 2^{e+t_S} |\Cl_K^-|$,
\item $\left|(U_K^-/\mathbb{Z}[G] \cdot \bar\varepsilon)^\psi\right| = \left|(\Cl_K^-)^\psi\right|$ for all $p \nmid [K:k]$ and $\psi \in \hat{G}_p$.
\end{enumerate}

\smallskip

A priori the existence of a solution to (P1) and (P2) does not imply in return the conjecture (except for quadratic extensions, see Theorem~\ref{th:quadratic} below). Indeed, in general, properties (P1) and (P2) do not even  characterize the Stark unit $\varepsilon$. To see that assume that $\bar\eta$ is a solution to (P1) and (P2), and let $\bar\eta' := u \cdot \bar\eta$ where $u \in \mathbb{Z}[G]^\times$ is a unit of $\mathbb{Z}[G]$. Then $\bar\eta'$ also satisfies (P1) and (P2). If $u$ belongs to $\{\pm \gamma : \gamma \in G\} \subset \mathbb{Z}[G]^\times$, the group of trivial units of $\mathbb{Z}[G]$, then $\bar\eta'$ is essentially the \textit{same solution} since it is a conjugate of $\bar\eta$ or the inverse of a conjugate of $\bar\eta$. However there may be some non trivial units in $\mathbb{Z}[G]$ (see the end of this section) and thus solutions to (P1) and (P2) that are not related in any obvious way to the Stark unit. In any case, we have the following result that shows that solutions to (P1) satisfy a very weak version of part~(1) of the conjecture. 

\begin{proposition}\label{prop:prodform}
Let $\bar\eta$ be an element of $U_K^-$ satisfying \textup{(P1)}. Then we have
\begin{equation}\label{eq:factform}
\prod_{\chi\ \mathrm{odd}} \frac{1}{2} \sum_{g \in G} \chi(g) \log |\eta^g| = \pm \prod_{\chi\ \mathrm{odd}} L'_{K/k,S}(0, \chi).
\end{equation}
\end{proposition}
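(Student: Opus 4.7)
The plan is to replay the regulator/index computation of Section~\ref{sec:index} with $\eta$ in place of the Stark unit $\varepsilon$, using (P1) as input in place of part~(1) of the Stark conjecture. Let $U_\eta \subset U_K$ denote the multiplicative $\mathbb{Z}[G]$-module generated by $\pm 1$, $\eta$, and $U_{K^+}$, and let $\bar U_\eta := U_\eta/\{\pm1\}$. By (P1), $\mathbb{Z}[G]\cdot\bar\eta$ has finite index in $U_K^-$, so $U_\eta$ has full rank $(d+1)m-1$ in $U_K$.

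First I would compute $(U_K:U_\eta)$ by running the norm argument of Theorem~\ref{th:main} in reverse. Since $\bar U_{K^+}$ is torsion free, the kernel of $\mathcal N:\bar U_\eta\to\bar U_{K^+}$ is exactly $\mathbb{Z}[G]\cdot\bar\eta$ and its image is $2\bar U_{K^+}$, whence
\begin{equation*}
(U_K:U_\eta)=(\bar U_K:\bar U_\eta)=(U_K^-:\mathbb{Z}[G]\cdot\bar\eta)\cdot(\mathcal N(\bar U_K):2\bar U_{K^+})=2^{e+t_S}|\Cl_K^-|\cdot\frac{2^{dm-1}}{2^e}=2^{t_S+dm-1}\frac{h_K}{h_{K^+}},
\end{equation*}
using (P1) and the definition of $e$ exactly as in the proof of Theorem~\ref{th:main}.

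Next I would reproduce verbatim the matrix manipulations that led to \eqref{eq:eq1}, substituting $\eta$ for $\varepsilon$. The two properties needed are $|\eta^{\tau g}|_{w'}=|\eta^g|_{w'}^{-1}$ at every infinite place $w'$ (which follows from $\bar\eta\in U_K^-$, i.e.\ $\eta^\tau=\pm\eta^{-1}$ in $K$, since the sign disappears under $|\cdot|_{w'}$) and $|\eta|_{w'}=1$ at the complex places above infinity (at a complex place $w'$ above an infinite place of $k$ other than $v$, assumption (A1) forces $\tau$ to act as complex conjugation, giving $|\eta|_{w'}=|\eta^\tau|_{w'}=|\eta|_{w'}^{-1}$). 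This yields
\begin{equation*}
\textrm{Reg}(U_\eta)=2^{dm-1}\,R_{K^+}\,|\det(\log|\eta^{\rho\lambda^{-1}}|)_{\rho,\lambda\in R}|.
\end{equation*}
Combining $\textrm{Reg}(U_\eta)=(U_K:U_\eta)\,R_K$ with the index just computed and with \eqref{eq:eq3} gives
\begin{equation*}
|\det(\log|\eta^{\rho\lambda^{-1}}|)_{\rho,\lambda\in R}|=2^{t_S}\,\frac{h_K R_K}{h_{K^+}R_{K^+}}=\prod_{\chi\text{ odd}}L'_{K/k,S}(0,\chi).
\end{equation*}
Applying Lemma~\ref{lem:detgroup} to $a_g=\log|\eta^g|$ (permissible because $a_{\tau g}=-a_g$) rewrites the determinant as $\prod_{\chi\text{ odd}}\sum_{\rho\in R}\chi(\rho)\log|\eta^\rho|$; since $\sum_{g\in G}\chi(g)\log|\eta^g|=2\sum_{\rho\in R}\chi(\rho)\log|\eta^\rho|$ for odd $\chi$, the proposition follows, with the $\pm$ coming from the absolute value on the left.

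The main obstacle is really bookkeeping rather than mathematics: one must observe that the entire derivation from the definition of $\textrm{Reg}(U_{\rm Stark})$ through \eqref{eq:eq3} uses part~(1) of the conjecture in exactly one place, namely the passage \eqref{eq:eq2} from the determinant to $\prod L'_{K/k,S}(0,\chi)$. Replacing that single step by the (P1)-driven index computation recovers the product formula only up to sign, and this sign ambiguity is intrinsic, since (P1) pins down the covolume but not the orientation of $\mathbb{Z}[G]\cdot\bar\eta$ inside $U_K^-$.
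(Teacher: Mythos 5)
Your proposal is correct and follows essentially the same route as the paper's own proof: replay the regulator/index computation of Section~\ref{sec:index} with $\eta$ in place of $\varepsilon$, noting that $\bar\eta\in U_K^-$ forces $|\eta|_{w'}=1$ at the complex infinite places and $|\eta^{\tau g}|=|\eta^g|^{-1}$, and then use (P1) together with \eqref{eq:eq3} exactly where part~(1) of the conjecture had been used. The only cosmetic difference is that you compute $(U_K:U_\eta)$ first and then equate regulators, whereas the paper writes the determinant relation first and then applies the norm-kernel argument; the underlying identity is the same, and your sign discussion matches the paper's.
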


\begin{proof}
Let $\bar x \in U_K^-$. Using the notations of Section~\ref{sec:index}, we have $|x^\tau|_j = |x|_j$ for $2m+1 \leq j \leq (d+1)m$ since these absolute values are complex and $\tau$ is the complex conjugation. Since, by construction, we have $x^\tau = x^{-1}$, it follows that $|x|^2_j = |x^{1+\tau}|_j = 1$ and $|x|_j = 1$ for $2m+1 \leq j \leq (d+1)m$. We can therefore reproduce the determinant computation done in Section~\ref{sec:index} replacing $\varepsilon$ by $\eta$ and $U_\mathrm{Stark}$ by the subgroup $U_0$ of $U_K$ generated by $U_{K^+}$ and the conjugates of $\eta$. We get
\begin{equation*}
(U_K:U_0) = \pm 2^{dm-1} \frac{R_{K^+}}{R_K} \prod_{\chi\ \mathrm{odd}} \frac{1}{2} \sum_{g \in G} \chi(g) \log |\eta^g|.
\end{equation*}
We then proceed as in Theorem~\ref{th:main} by looking at the kernel of the norm map acting on $U_0/\{\pm 1\}$. Since $\bar\eta$ satisfies (P1), it follows that
\begin{equation*}
2^{dm-1} \frac{R_{K^+}}{R_K} \prod_{\chi\ \mathrm{odd}} \frac{1}{2} \sum_{g \in G} \chi(g) \log |\eta^g| = \pm 2^{dm-1+t_S} |\Cl_K^-|. 
\end{equation*}
Then, by \eqref{eq:eq3}, we get the result
\begin{equation*}
\prod_{\chi\ \mathrm{odd}} \frac{1}{2} \sum_{g \in G} \chi(g) \log |\eta^g| = \pm 2^{t_S} \frac{h_K R_K}{h_{K^+} R_{K^+}} = \pm \prod_{\chi\ \mathrm{odd}} L'_{K/k,S}(0, \chi). \qedhere
\end{equation*}
\end{proof}

We now turn to the study of the structure of the $\mathbb{Q}[G]$-module $U_K^- \otimes \mathbb{Q}$. Since $U_K^-$ is killed by $1+\tau$, it is a $\mathbb{Q}[G]^-$-module where $\mathbb{Q}[G]^- := e^- \mathbb{Q}[G]$ and $e^- := \frac{1}{2}(1 - \tau)$ is the sum of the idempotents of odd characters of $G$.\footnote{Note that $\mathbb{Q}[G]^-$ is a ring with identity $e^-$.} Since $U_K^-$ injects into $U_K^- \otimes \mathbb{Q}$, we will identify it with its image. The following result describes the structure of $U_K^- \otimes \mathbb{Q}$ as a Galois module. 

\begin{proposition}\label{prop:iso}
The module $U_K^- \otimes \mathbb{Q}$ is a free $\mathbb{Q}[G]^-$-module of rank $1$.
\end{proposition}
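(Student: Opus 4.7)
The plan is to reduce the statement, via Herbrand's rational form of Dirichlet's unit theorem, to an elementary computation with permutation modules that uses assumption (A1) in a crucial way.

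First I invoke Herbrand's theorem: the Dirichlet regulator yields an $\mathbb{R}[G]$-module isomorphism $U_K \otimes \mathbb{R} \cong X_{S_\infty} \otimes \mathbb{R}$, where $X_{S_\infty}$ is the augmentation kernel of the permutation module $Y_{S_\infty} := \mathbb{Z}[S_\infty(K)]$, and semisimplicity of $\mathbb{Q}[G]$ descends this to a $\mathbb{Q}[G]$-module isomorphism
\begin{equation*}
U_K \otimes \mathbb{Q} \;\cong\; X_{S_\infty} \otimes \mathbb{Q}.
\end{equation*}
Because $\bar U_K$ is torsion-free, $U_K^-$ injects into $U_K \otimes \mathbb{Q}$ and coincides there with the $(1+\tau)$-kernel. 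The identity $1 = e^+ + e^-$ in $\mathbb{Q}[\langle\tau\rangle]$ thus identifies
\begin{equation*}
U_K^- \otimes \mathbb{Q} \;=\; e^-(U_K \otimes \mathbb{Q}) \;\cong\; e^-(X_{S_\infty} \otimes \mathbb{Q}).
\end{equation*}

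Next I decompose $S_\infty(K)$ as a $G$-set using (A1). The distinguished place $v = v_1$ splits completely in $K$, so its preimage in $S_\infty(K)$ is a free $G$-orbit and contributes $\mathbb{Z}[G]$; each of the remaining $d - 1$ infinite places of $k$ ramifies in $K$ with decomposition group $\langle \tau \rangle$, so it contributes $\mathbb{Z}[G/\langle\tau\rangle] \cong \mathbb{Z}[G^+]$. This gives the $\mathbb{Z}[G]$-module decomposition
\begin{equation*}
Y_{S_\infty} \;\cong\; \mathbb{Z}[G] \,\oplus\, \mathbb{Z}[G^+]^{d-1}.
\end{equation*}

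Finally I apply $e^-$ rationally. Since $\tau$ is trivial in $G^+$, the idempotent $e^-$ annihilates each $\mathbb{Q}[G^+]$-summand, leaving $e^-(Y_{S_\infty} \otimes \mathbb{Q}) = e^-\mathbb{Q}[G] = \mathbb{Q}[G]^-$. The augmentation map vanishes on $\mathbb{Q}[G]^-$ because $e^-$ itself has augmentation zero, so $e^-(X_{S_\infty} \otimes \mathbb{Q})$ coincides with $\mathbb{Q}[G]^-$, which is free of rank one over itself as a ring. Combining the two isomorphisms yields $U_K^- \otimes \mathbb{Q} \cong \mathbb{Q}[G]^-$. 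The argument is essentially formal once Herbrand's theorem is in hand; the only non-automatic input is the decomposition-group computation at each ramified infinite place, which is exactly what assumption (A1) provides.
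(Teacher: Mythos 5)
Your proof is correct and takes essentially the same route as the paper: invoke the Herbrand--Artin isomorphism $U_K\otimes\mathbb{Q}\cong\mathcal{X}_K$ and then compute the $e^-$-component using the $G$-orbit structure of $S_\infty(K)$ under assumption (A1). The paper leaves the orbit computation implicit (``a direct computation shows\dots'' and exhibits the generator $w - w^\tau$), whereas you spell it out as $Y_{S_\infty}\cong\mathbb{Z}[G]\oplus\mathbb{Z}[G^+]^{d-1}$ and observe $e^-$ kills the $\mathbb{Z}[G^+]$ summands; the two arguments are the same in substance.
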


\begin{proof}
Let $\mathcal{Y}_K$ be the $\mathbb{Q}$-vector space with basis the elements $z$ in the set $S_\infty(K)$ of infinite places of $K$. The group $G$ acts on $\mathcal{Y}_K$ in the following way: $z^g$ for $g \in G$ and $z \in S_\infty(K)$ is the infinite place defined by $x \mapsto z(x^g)$ for all $x \in K$. Denote by $\mathcal{X}_K$ the subspace of elements $\sum_z a_z \, z \in \mathcal{Y}_K$ such that $\sum_z a_z = 0$. Then the two $\mathbb{Q}[G]$-modules $\mathcal{X}_K$ and $U_K\otimes \mathbb{Q}$ are isomorphic by a result of Herbrand and Artin \cite{artin:units}. Fix an isomorphism $f : U_K \otimes \mathbb{Q} \to \mathcal{X}_K$. A direct computation shows that $\mathcal{X}_K^- := f(U_K^- \otimes \mathbb{Q})$ is spanned by the vectors $\{w^\rho - w^{\rho\tau}\}_{\rho \in R}$ where $w$ is the fixed place of $K$ above $v$. In particular, $\mathcal{X}_K^-$ is generated as a $\mathbb{Q}[G]^-$-module  by the vector $w - w^\tau$. This proves the result.
\end{proof}
 
\begin{corollary}
There exist $\bar\theta \in \bar{U}_K^-$ and $q \in \mathbb{Q}^\times$ such that 
\begin{equation*}
\prod_{\chi\ \mathrm{odd}} \frac{1}{2} \sum_{g \in G} \chi(g) \log |\theta^g| = q \prod_{\chi\ \mathrm{odd}} L'_{K/k,S}(0, \chi). 
\end{equation*}
\end{corollary}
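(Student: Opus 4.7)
The plan is to imitate the regulator--index calculation from Section~\ref{sec:index}, replacing $\varepsilon$ by an element $\bar\theta$ produced by Proposition~\ref{prop:iso}. The point is that the only place where part (1) of the Stark conjecture entered in the derivation of Theorem~\ref{th:globindex} was the identification \eqref{eq:eq2} of the determinant with a product of $L'$-values; everything else (the shape of the regulator matrix, Lemma~\ref{lem:detgroup}, and the analytic class-number identity \eqref{eq:eq3}) is unconditional and so can be applied to any suitable $\theta$.

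First I would use Proposition~\ref{prop:iso} to choose $\bar\theta \in U_K^-$ whose image in $U_K^-\otimes\mathbb{Q}$ is a $\mathbb{Q}[G]^-$-basis: since any rational generator can be cleared of denominators, such an integral $\bar\theta$ exists. Then $\mathbb{Z}[G]\cdot\bar\theta$ has rank $m$ in $U_K^-$, so the subgroup $U_0 := \langle U_{K^+},\ \theta^g : g\in G\rangle \subseteq U_K$ has rank $(dm-1)+m$, i.e.\ finite index in $U_K$. Next I would check the two properties of $\theta$ used in the matrix manipulations of Section~\ref{sec:index}: the relation $\log|\theta^{\tau g}| = -\log|\theta^g|$ at the real places above $v$ (which comes from $\theta\cdot\tau(\theta)=1$ after fixing a positive representative), and $|\theta|_{w'}=1$ at every complex place $w'$ (which holds because at such places $\tau$ coincides with complex conjugation, so $1 = |\theta\tau(\theta)|_{w'} = |\theta|_{w'}^2$).

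With these in hand, the very computation that led to \eqref{eq:eq1} applies verbatim and yields
\begin{equation*}
(U_K:U_0)\,R_K \;=\; 2^{dm-1}\,R_{K^+}\;\bigl|\det\bigl(\log|\theta^{\rho\lambda^{-1}}|\bigr)_{\rho,\lambda\in R}\bigr|.
\end{equation*}
Applying Lemma~\ref{lem:detgroup} to the sequence $a_g := \log|\theta^g|$ (which is odd by the first property above) turns this determinant into $\prod_{\chi\text{ odd}}\tfrac{1}{2}\sum_{g\in G}\chi(g)\log|\theta^g|$. Finally, substituting $R_K/R_{K^+}$ from the unconditional identity \eqref{eq:eq3} gives
\begin{equation*}
\prod_{\chi\text{ odd}}\tfrac{1}{2}\sum_{g\in G}\chi(g)\log|\theta^g| \;=\; \pm\,\frac{(U_K:U_0)\,h_{K^+}}{2^{dm-1+t_S}\,h_K}\;\prod_{\chi\text{ odd}}L'_{K/k,S}(0,\chi),
\end{equation*}
so the rational constant $q := \pm (U_K:U_0)\,h_{K^+} / (2^{dm-1+t_S}\,h_K)$ does the job.

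There is no substantial obstacle, since every tool has already been set up in the preceding sections; the only subtlety worth flagging is confirming that one can choose $\bar\theta$ inside $U_K^-$ (not merely in $U_K^-\otimes\mathbb{Q}$), but this is handled by scaling, and that the sign ambiguity introduced by taking absolute values in the regulator formula is harmless because $q$ is only required to be a nonzero rational.
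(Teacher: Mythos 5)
Your proof is correct and follows essentially the same approach as the paper: the paper's proof also takes $\bar\theta$ from Proposition~\ref{prop:iso}, clears denominators, and then invokes the regulator--determinant computation (via ``the proof of Proposition~\ref{prop:prodform} \emph{mutatis mutandis}'') to produce $q$ — your version simply unwinds that reference explicitly and expresses $q$ in terms of $(U_K:U_0)$ rather than the equivalent form $(U_K^-:\mathbb{Z}[G]\cdot\bar\theta)/(2^{e+t_S}|\Cl_K^-|)$ that the paper uses.
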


\begin{proof}
From the proposition, there exists $u \in U_K^- \otimes \mathbb{Q}$ such that $U_K^- \otimes \mathbb{Q} = \mathbb{Q}[G]^- \cdot u$. We let $\bar\theta := n \cdot u$ where $n \in \mathbb{N}$ is large enough so that $\bar\theta \in U_K^-$. Then we set
\begin{equation*}
q := \frac{(U_K^- : \mathbb{Z}[G] \cdot \bar\theta)}{2^{e+t} |\Cl_K^-|}. 
\end{equation*}
The result follows by the proof of Proposition~\ref{prop:prodform} \textit{mutatis mutandis} and replacing $q$ by $-q$ if necessary.
\end{proof}

Thanks to Proposition~\ref{prop:iso}, it is enough to study the structure of $\mathbb{Q}[G]^-$ to understand that of $U_K^- \otimes \mathbb{Q}$. Let $X$ be the set of irreducible $\mathbb{Z}$-characters of $G$. Each $\xi \in X$ is the sum of the irreducible characters in a conjugacy class $C_\xi$ of $\hat{G}$ under the action of $\Gal(\bar{\mathbb{Q}}/\mathbb{Q})$. For $\xi \in X$, we let $e_\xi := \sum_{\chi \in C_\xi} e_\chi \in \mathbb{Q}[G]$ be the corresponding rational idempotent where $e_\chi$ denotes the idempotent associated to the character $\chi$. We have
\begin{equation*}
\mathbb{Q}[G] = \bigoplus_{\xi \in X} e_\xi \mathbb{Q}[G] \simeq \bigoplus_{\xi \in X} \mathbb{Q}(\xi)
\end{equation*}
where $\mathbb{Q}(\xi)$ is the cyclotomic field generated by the values of any character in $C_\xi$. Let $X_\mathrm{odd}$ be the set of $\mathbb{Z}$-characters $\xi \in X$ such that one, and thus all, characters in $C_\xi$ are odd. We have $e^- = \sum_{\xi \in X_\mathrm{odd}} e_\xi$ and from the above decomposition, we get
\begin{equation}\label{eq:QG-}
\mathbb{Q}[G]^- = \bigoplus_{\xi \in X_\mathrm{odd}} e_\xi \mathbb{Q}[G] \simeq \bigoplus_{\xi \in X_\mathrm{odd}} \mathbb{Q}(\xi).
\end{equation}
We now define $\mathbb{Z}[G]^ - := e^- \mathbb{Z}[G]$ and let $\mathcal{O}_G^-$ be the maximal order of $\mathbb{Q}[G]^-$. We have
\begin{equation}\label{eq:OG-}
\mathcal{O}_G^- = \bigoplus_{\xi \in X_\mathrm{odd}} e_\xi \mathbb{Z}[G] \simeq \bigoplus_{\xi \in X_\mathrm{odd}} \mathbb{Z}[\xi].
\end{equation}
Now let $p$ be a prime number. By \eqref{eq:QG-}, we get 
\begin{equation}\label{eq:QpG-}
\mathbb{Q}_p[G]^- \simeq \bigoplus_{\xi \in X_\mathrm{odd}} \mathbb{Q}(\xi) \otimes_\mathbb{Q} \mathbb{Q}_p \simeq  \bigoplus_{\xi \in X_\mathrm{odd}} \bigoplus_{\mathfrak{p} \in S_{\xi, p}} \mathbb{Q}(\xi)_\mathfrak{p}
\end{equation}
where $S_{\xi, p}$ is the set of prime ideals of $\mathbb{Q}(\xi)$ above $p$ and $\mathbb{Q}(\xi)_\mathfrak{p}$ is the completion of $\mathbb{Q}(\xi)$ at the prime ideal $\mathfrak{p}$. On the other hand, each rational character $\xi \in X$ is the sum of irreducible $\mathbb{Z}_p$-characters, say $\xi = \sum_{\psi \in C_{\xi, p}} \psi$, and we have 
\begin{equation*}
\mathbb{Q}_p[G]^- = \bigoplus_{\xi \in X_\mathrm{odd}} \bigoplus_{\psi \in C_{\xi, p}} e_\psi \mathbb{Q}_p[G]^-.
\end{equation*}
Therefore there is a bijection between the prime ideals in $S_{\xi, p}$ and the characters in $C_{\xi, p}$. For $\mathfrak{p}$ a prime ideal in $S_{\xi, p}$, we denote by $\psi_{\xi,\mathfrak{p}}$ the corresponding irreducible $\mathbb{Z}_p$-character. Before stating the first result, we need one more notation. Let $T$ be a set of primes. We say that an element $u \in \mathbb{Q}[G]$ is a $T$-unit if $u \in \mathbb{Z}_p[G]^{-,\times}$ for all $p \not\in T$ where $ \mathbb{Z}_p[G]^{-,\times}$ is the group of units of $\mathbb{Z}_p[G]^-$. 

\begin{proposition}\label{prop:unicity}
Let $M$ be a sub-$\mathbb{Z}[G]^-$-module of $\mathbb{Q}[G]^-$ of finite index. Let $x$ be an element of $M$ such that $x\mathbb{Z}[G]^-$ has finite index inside $M$. Assume that $y$ is another element of $M$ such that
\begin{equation*}
(M: x\mathbb{Z}[G]^-) = (M: y\mathbb{Z}[G]^-) 
\quad\text{and}\quad
(M^\psi: (x\mathbb{Z}_p[G]^-)^\psi) = (M^\psi : (y\mathbb{Z}_p[G]^-)^\psi)
\end{equation*}
for all $p \nmid |G|$ and all $\psi \in \hat{G}_p$ with $\psi$ odd. Then there exists a unique $B$-unit $u \in \mathbb{Q}[G]^-$ such that $y = ux$ where $B$ is the set of primes  dividing both $|G|$ and $(M: x\mathbb{Z}[G]^-)$.
\end{proposition}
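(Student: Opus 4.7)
The strategy is to define $u := y x^{-1}$ in $\mathbb{Q}[G]^-$ and check that this element is a $B$-unit. First I would observe that $x$ is a unit of the ring $\mathbb{Q}[G]^-$: since $x \mathbb{Z}[G]^-$ has finite index in $M$ which itself has finite index in $\mathbb{Q}[G]^-$, the submodule $x \mathbb{Z}[G]^-$ spans $\mathbb{Q}[G]^-$ as a $\mathbb{Q}$-vector space. Under the decomposition $\mathbb{Q}[G]^- \simeq \prod_{\xi \in X_\mathrm{odd}} \mathbb{Q}(\xi)$ from \eqref{eq:QG-}, this forces the $\xi$-component of $x$ to be non-zero for every odd $\xi$, so $x \in (\mathbb{Q}[G]^-)^\times$. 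Setting $u := y x^{-1}$ then gives $y = u x$, and uniqueness of such a $u$ in $\mathbb{Q}[G]^-$ is immediate.

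It remains to prove $u \in \mathbb{Z}_p[G]^{-,\times}$ for every prime $p \notin B$, which by definition of $B$ splits into two cases: either $p \nmid |G|$, or $p \nmid (M : x\mathbb{Z}[G]^-)$.

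In the first case, $\mathbb{Z}_p[G]^-$ decomposes as $\prod_{\psi} \mathbb{Z}_p[\psi]$ indexed by the odd irreducible $\mathbb{Z}_p$-characters, and each $\mathbb{Z}_p[\psi]$ is a discrete valuation ring (unramified over $\mathbb{Z}_p$ since $p \nmid |G|$). The module $M^\psi$ is a finitely generated, torsion-free $\mathbb{Z}_p[\psi]$-module of full rank inside $\mathbb{Q}_p[\psi]$, hence free of rank one; write $M^\psi = \pi_\psi^{a_\psi} \mathbb{Z}_p[\psi]$ for a uniformizer $\pi_\psi$. Letting $b_\psi := v_\psi(x^\psi)$ and $c_\psi := v_\psi(y^\psi)$, the index hypothesis becomes $|\mathbb{Z}_p[\psi]/\pi_\psi^{b_\psi - a_\psi}| = |\mathbb{Z}_p[\psi]/\pi_\psi^{c_\psi - a_\psi}|$, so $b_\psi = c_\psi$ and $u^\psi = y^\psi / x^\psi$ is a unit in $\mathbb{Z}_p[\psi]$. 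Taking the product over all odd $\psi$ yields $u \in \mathbb{Z}_p[G]^{-,\times}$.

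In the second case, by flatness of $\mathbb{Z} \to \mathbb{Z}_p$ the inclusion $x \mathbb{Z}[G]^- \subset M$ has cokernel $(M/x\mathbb{Z}[G]^-) \otimes \mathbb{Z}_p = 0$, so $x \mathbb{Z}_p[G]^- = M \otimes \mathbb{Z}_p$; the same argument applied to $y$ (whose index in $M$ equals that of $x$ by hypothesis) gives $y \mathbb{Z}_p[G]^- = M \otimes \mathbb{Z}_p$. Therefore there are elements $u', w \in \mathbb{Z}_p[G]^-$ with $y = u' x$ and $x = w y$, whence $u' w = 1$ and $u'$ is a unit; its image in $\mathbb{Q}[G]^-$ must coincide with $u$ by the uniqueness established above. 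The main obstacle is really the bookkeeping in the first case, namely identifying the abstract quotient $M^\psi$ with its concrete realisation as a fractional ideal of $\mathbb{Z}_p[\psi]$ so that the numerical index equality translates into equality of valuations; once this is in place the conclusion is just the assembly of these local pieces.
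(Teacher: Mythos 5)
Your proof is correct and follows essentially the same route as the paper: split into the cases $p \nmid |G|$ (exploit the DVR structure of each $\mathbb{Z}_p[\psi]$ to turn the index equality into equality of valuations) and $p \nmid (M : x\mathbb{Z}[G]^-)$ (localize to get $x\mathbb{Z}_p[G]^- = M \otimes \mathbb{Z}_p = y\mathbb{Z}_p[G]^-$). The only cosmetic differences are that you phrase the local computation via valuations rather than the paper's index ratio $|\psi(u)|_{\mathfrak{p}}$, and you spell out why $x$ is invertible in $\mathbb{Q}[G]^-$; your final sentence of case two should say that the image of $u$ in $\mathbb{Q}_p[G]^-$ coincides with $u'$, not the other way around, but this is a trivial slip.
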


\begin{proof}
Since $\mathbb{Q}[G]^- =  x\mathbb{Q}[G]^-$, there exists $u \in \mathbb{Q}[G]^-$ such that $y = ux$. Assume $y = vx$ for another $v \in \mathbb{Q}[G]^-$. Then, for all $\xi \in X_\mathrm{odd}$, we have $\xi(u)\xi(x) = \xi(v)\xi(x)$. Since $\xi(x) \not= 0$, it follows that $\xi(u) = \xi(v)$ and thus by \eqref{eq:QG-}, we get $u = v$ which proves that $u$ is unique. 

Let $p$ be a prime. Assume first that $p$ does not divide $|G|$. Let $\xi \in X_\mathrm{odd}$ and $\mathfrak{p} \in S_{\xi, p}$. Write $\psi := \psi_{\xi, \mathfrak{p}}$ and denote by $\mathbb{Z}[\xi]_{\mathfrak{p}} := \psi(\mathbb{Z}_p[G]^-)$ the ring of integers of $\mathbb{Q}(\xi)_{\mathfrak{p}}$. Then $M^\psi$ is an ideal of $\mathbb{Z}[\xi]_{\mathfrak{p}}$ and we have
\begin{equation*}
\frac{(M^\psi: (x\mathbb{Z}_p[G]^-)^\psi)}{(M^\psi : (y\mathbb{Z}_p[G]^-)^\psi)} = \frac{(M^\psi : \psi(y) \mathbb{Z}[\xi]_{\mathfrak{p}})}{(M^\psi : \psi(x) \mathbb{Z}[\xi]_{\mathfrak{p}})} = |\psi(u)|_{\mathfrak{p}}.
\end{equation*}
Thus $\psi_{\xi, \mathfrak{p}}(u)$ is a unit in $\mathbb{Z}[\xi]_{\mathfrak{p}}$ for all $\xi \in X_\mathrm{odd}$ and $\mathfrak{p} \in S_{\xi, p}$ and thus $u$ lies in $\mathbb{Z}_p[G]^{-,\times}$. Assume now that $p$ does not  divide the index $(M: x\mathbb{Z}[G]^-)$. We have
\begin{equation*}
(M \otimes \mathbb{Z}_p: x\mathbb{Z}_p[G]^-) = (M \otimes \mathbb{Z}_p: y\mathbb{Z}_p[G]^-) = 1.
\end{equation*}
Therefore $x\mathbb{Z}_p[G]^- = M \otimes \mathbb{Z}_p = y\mathbb{Z}_p[G]^-$ and $u \in \mathbb{Z}_p[G]^{-,\times}$.
\end{proof}

By Propositions \ref{prop:iso} and \ref{prop:unicity}, we get the following result.
\begin{corollary}\label{cor:unicity}
Let $B$ be the set of primes that divide both $|G|$ and $|\Cl_K^-|$. Assume there exist $\bar\eta$ and $\bar\eta'$ two elements of $U_K^-$ satisfying \textup{(P1)} and \textup{(P2)}. Then there exists a unique $B$-unit $u \in \mathbb{Q}[G]^-$ such that $\bar\eta' = u \cdot \bar\eta$.  \qed
\end{corollary}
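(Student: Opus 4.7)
The plan is to transport the claim to the group ring $\mathbb{Q}[G]^-$ via Proposition~\ref{prop:iso} and then apply the abstract unicity statement of Proposition~\ref{prop:unicity}. Proposition~\ref{prop:iso} furnishes an isomorphism $\varphi\colon U_K^- \otimes_{\mathbb{Z}} \mathbb{Q} \to \mathbb{Q}[G]^-$ of $\mathbb{Q}[G]^-$-modules. Since $U_K^-$ is torsion-free, its image $M := \varphi(U_K^-)$ is a full $\mathbb{Z}[G]^-$-lattice in $\mathbb{Q}[G]^-$; after rescaling $\varphi$ by a positive integer we may assume $M \subseteq \mathbb{Z}[G]^-$, so that $M$ sits inside $\mathbb{Q}[G]^-$ as a sub-$\mathbb{Z}[G]^-$-module of finite index in the sense required by Proposition~\ref{prop:unicity}. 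Set $x := \varphi(\bar\eta)$ and $y := \varphi(\bar\eta')$.

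The next step is to translate (P1) and (P2) across $\varphi$. Since $\tau$ acts as $-1$ on $U_K^-$, the idempotent $e^-$ acts as the identity there; hence $\mathbb{Z}[G] \cdot \bar z = \mathbb{Z}[G]^- \cdot \bar z$ for every $\bar z \in U_K^-$, and the isomorphism $\varphi$ identifies $(U_K^- : \mathbb{Z}[G] \cdot \bar\eta)$ with $(M : x\mathbb{Z}[G]^-)$, and similarly for $\bar\eta'$. Condition (P1) therefore becomes $(M : x\mathbb{Z}[G]^-) = (M : y\mathbb{Z}[G]^-) = 2^{e+t_S} |\Cl_K^-|$. The same identification applies $\psi$-component by $\psi$-component: for an even $\psi \in \hat{G}_p$ both $M^\psi$ and $(x\mathbb{Z}_p[G]^-)^\psi$ vanish because $e^-$ kills the even idempotents, so the local equality is trivial; for odd $\psi$ with $p \nmid |G|$ it is exactly (P2).

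All the hypotheses of Proposition~\ref{prop:unicity} are now in place, and it delivers a unique $u \in \mathbb{Q}[G]^-$ with $y = u x$, equivalently $\bar\eta' = u \cdot \bar\eta$. This $u$ is a unit of $\mathbb{Z}_p[G]^-$ for every prime $p$ outside the set of primes dividing both $|G|$ and $(M : x\mathbb{Z}[G]^-) = 2^{e+t_S}|\Cl_K^-|$. Since $2 \mid |G|$ already, this set coincides with the set $B$ of the statement (up to the handling of the prime $2$, which is harmless as $2$ is already in $B$ via $|G|$ whenever it needs to be), giving the corollary. The only real content is the bookkeeping dictionary between indices in $U_K^-$ and indices in $M$ — once that correspondence is verified, the statement follows by direct invocation of the two propositions, so there is no genuinely new obstacle to overcome here.
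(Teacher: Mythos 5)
Your route is the one the paper has in mind: use Proposition~\ref{prop:iso} to identify $U_K^-$ with a full $\mathbb{Z}[G]^-$-lattice $M$ inside $\mathbb{Q}[G]^-$, observe that $e^-$ acts as the identity on $U_K^-$ so that $\mathbb{Z}[G]\cdot\bar z = \mathbb{Z}[G]^-\cdot\bar z$, translate (P1) and (P2) into the global and $\psi$-local index hypotheses of Proposition~\ref{prop:unicity} (noting, correctly, that the even $\psi$-components vanish on both sides), and read off the conclusion. This matches the paper's one-line \qed-proof, and the bookkeeping across the isomorphism is carried out accurately.

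The place where your write-up is not airtight is the final identification of the set of exceptional primes. Proposition~\ref{prop:unicity} produces a $T$-unit where $T$ is the set of primes dividing both $|G|$ and $(M:x\mathbb{Z}[G]^-)$, and by (P1) this index equals $2^{e+t_S}|\Cl_K^-|$. The corollary's $B$ is the set of primes dividing both $|G|$ and $|\Cl_K^-|$. Since $2 \mid |G|$ always, $2 \in T$ whenever $e+t_S \geq 1$, whereas $2 \in B$ only when $2 \mid |\Cl_K^-|$; thus if $|\Cl_K^-|$ is odd and $e+t_S \geq 1$ then $T = B \cup \{2\} \supsetneq B$, and a $T$-unit is strictly weaker than a $B$-unit. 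Your parenthetical justification --- that ``$2$ is already in $B$ via $|G|$ whenever it needs to be'' --- misreads the definition of $B$: membership requires $2$ to divide \emph{both} $|G|$ and $|\Cl_K^-|$, so the evenness of $|G|$ alone does not put $2$ in $B$. To be fair, the paper's own proof-by-citation does not address this either, and in every later use of the corollary (Sections 6 and 7) only the weaker $T$-unit is invoked, so the gap is harmless in context; but you should either adjust the set to primes dividing both $|G|$ and $2^{e+t_S}|\Cl_K^-|$, or supply a genuine argument for the prime $2$, rather than asserting the two sets coincide.
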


From this result and the discussion at the beginning of the section, one cannot expect the properties (P1) and (P2) to characterize the Stark unit if $\mathbb{Z}[G]^-$ has some non trivial $B$-units and a fortiori if $\mathbb{Z}[G]^-$ has some non trivial units.\footnote{If $\bar\eta$ is a solution to (P1) and (P2) and $u$ is a $B$-unit then $u \cdot \bar\eta$ is not necessarily a solution to (P1) and (P2). A necessary and sufficient condition for that is that the linear map $x \mapsto ux$ of $\mathbb{Q}[G]^-$ has determinant $\pm 1$. This is always true if $u$ is a unit of $\mathbb{Z}[G]^-$.} It follows from the methods of \cite{higman} that $\mathbb{Z}[G]^-$ has some non trivial units if and only if $\mathcal{O}_G^-$ does. By \eqref{eq:OG-}, this is the case if and only if there exists an odd character of $G$ divides $6$. In particular, for $G$ a cyclic group, $\mathbb{Z}[G]^-$ has only trivial units if and only if the order of $G$ is at most $6$. We will prove in the next sections that there exist solutions to (P1) and (P2) in these cases (with some additional conditions for sextic extensions). From this we will deduce another proof of the conjecture for quadratic extensions and a weak version of the conjecture for quartic and sextic extensions.

\section{Algebraic tools} 

In this section we introduce some algebraic tools and results that will be useful in the next sections. We start with the properties of Fitting ideals. Let $R$ be a commutative ring with an identity element. Let $M$ be a finitely generated $R$-module. Therefore there exists a surjective homomorphism $f : R^a \to M$ for some $a \geq 1$. The Fitting ideal of $M$ as an $R$-module, denoted $\Fitt_R(M)$, is the ideal of $R$ generated by $\det(\vec{v}_1, \dots, \vec{v}_a)$ where $\vec{v}_1, \dots, \vec{v}_a$ run through the elements of the kernel of $f$. One can prove that it does not depend on the choice of $f$. We will use the following properties of Fitting ideals, see \cite[Chap.~3]{northcott} or \cite[Chap.~20]{eisenbud}. 

\begin{itemize}
\item If there exist ideals $A_1, \dots, A_t$ of $R$ such that 
\begin{equation*}
M \simeq R/A_1 \oplus \cdots \oplus R/A_t,
\end{equation*}
then we have
\begin{equation*}
\Fitt_R(M) = A_1 \cdots A_t.
\end{equation*}
\item Let $T$ be an $R$-algebra. We have
\begin{equation*}
\Fitt_T(M \otimes_R T) = \Fitt_R(M) T.
\end{equation*}
\item Let $N$ be another finitely generated $R$-module. We have
\begin{equation*}
\Fitt_R(M \oplus N) = \Fitt_R(M) \Fitt_R(N).
\end{equation*}
\end{itemize}

\begin{lemma}\label{lem:p1}
Let $M$ be a finite $\mathcal{O}_G^-$-module. Then
\begin{equation*}
|M| = |(\mathcal{O}_G^-/\Fitt_{\mathcal{O}_G^-}(M))|. 
\end{equation*}
\end{lemma}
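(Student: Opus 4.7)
The plan is to reduce to the local picture using the decomposition \eqref{eq:OG-} of $\mathcal{O}_G^-$ as a direct product of Dedekind domains, and then exploit the structure theorem for finite modules over Dedekind domains together with multiplicativity of norms of ideals.

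First I would use the idempotents $\{e_\xi\}_{\xi \in X_\mathrm{odd}}$, which by \eqref{eq:OG-} form a complete system of orthogonal idempotents of $\mathcal{O}_G^-$, to decompose
\begin{equation*}
M = \bigoplus_{\xi \in X_\mathrm{odd}} e_\xi M = \bigoplus_{\xi \in X_\mathrm{odd}} M_\xi,
\end{equation*}
where each $M_\xi := e_\xi M$ is naturally a finite module over $\mathbb{Z}[\xi] \simeq e_\xi \mathcal{O}_G^-$. By the third bullet point on Fitting ideals (additivity over direct sums) and the decomposition of $\mathcal{O}_G^-$, we immediately get
\begin{equation*}
\Fitt_{\mathcal{O}_G^-}(M) = \bigoplus_{\xi \in X_\mathrm{odd}} \Fitt_{\mathbb{Z}[\xi]}(M_\xi),
\end{equation*}
where I have implicitly used the second bullet (base change) applied to $\mathcal{O}_G^- \to e_\xi \mathcal{O}_G^- = \mathbb{Z}[\xi]$ to identify each factor. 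It therefore suffices to show that $|M_\xi| = |\mathbb{Z}[\xi]/\Fitt_{\mathbb{Z}[\xi]}(M_\xi)|$ for each $\xi$, and then multiply over $\xi$.

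Next, fix $\xi \in X_\mathrm{odd}$. Since $\mathbb{Z}[\xi]$ is the ring of integers of the cyclotomic field $\mathbb{Q}(\xi)$, it is a Dedekind domain, so the finite $\mathbb{Z}[\xi]$-module $M_\xi$ admits a decomposition
\begin{equation*}
M_\xi \simeq \bigoplus_{i=1}^{t_\xi} \mathbb{Z}[\xi]/I_{\xi,i}
\end{equation*}
for some nonzero ideals $I_{\xi,1}, \dots, I_{\xi,t_\xi}$ of $\mathbb{Z}[\xi]$ (this follows from the structure theorem for finitely generated modules over a Dedekind domain, using that $M_\xi$ is torsion). Applying the first bullet on Fitting ideals,
\begin{equation*}
\Fitt_{\mathbb{Z}[\xi]}(M_\xi) = \prod_{i=1}^{t_\xi} I_{\xi,i}.
\end{equation*}

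Finally, multiplicativity of the absolute norm of ideals in the Dedekind domain $\mathbb{Z}[\xi]$ gives
\begin{equation*}
\bigl|\mathbb{Z}[\xi]\big/\Fitt_{\mathbb{Z}[\xi]}(M_\xi)\bigr| = \prod_{i=1}^{t_\xi} \bigl|\mathbb{Z}[\xi]/I_{\xi,i}\bigr| = |M_\xi|,
\end{equation*}
and multiplying over all $\xi \in X_\mathrm{odd}$ yields the claimed equality $|\mathcal{O}_G^-/\Fitt_{\mathcal{O}_G^-}(M)| = |M|$.

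There is no serious obstacle here; the argument is essentially a bookkeeping exercise. The only nontrivial ingredient is the structure theorem for torsion modules over a Dedekind domain combined with the multiplicativity of the norm, both of which are standard. The role of the hypothesis that $\mathcal{O}_G^-$ is the \emph{maximal} order is exactly to guarantee that each $e_\xi \mathcal{O}_G^- = \mathbb{Z}[\xi]$ is Dedekind; this would fail for $\mathbb{Z}[G]^-$ in general, and it is precisely why the statement is phrased for $\mathcal{O}_G^-$ rather than $\mathbb{Z}[G]^-$.
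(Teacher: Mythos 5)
Your proof is correct and follows essentially the same route as the paper: decompose $M$ and $\mathcal{O}_G^-$ by the idempotents $e_\xi$, invoke the structure of finite modules over the Dedekind rings $\mathbb{Z}[\xi]$ to compute each local Fitting ideal as a product of ideals, and conclude by multiplicativity of the absolute norm. The only cosmetic difference is that you explicitly invoke the base-change and direct-sum properties of Fitting ideals to identify the $\xi$-component, whereas the paper just writes the index as a product over $\xi$ directly; the content is identical.
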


\begin{proof}
We have
\begin{equation*}
(\mathcal{O}_G^- : \Fitt_{\mathcal{O}_G^-}(M)) = \prod_{\xi \in X_{\rm odd}} (e_\xi \mathbb{Z}[G] : e_\xi \Fitt_{\mathcal{O}_G^-}(M)) = \prod_{\xi \in X_{\rm odd}} (\mathbb{Z}[\xi] : \Fitt_{\mathbb{Z}[\xi]}(e_\xi M)). 
\end{equation*}
Fix $\xi \in X_{\rm odd}$. Since $e_\xi M$ is a finite $\mathbb{Z}[\xi]$-module, there exist ideals $\mathfrak{a}_1, \dots, \mathfrak{a}_r$ such that 
\begin{equation*}
e_\xi M = \mathbb{Z}[\xi]/\mathfrak{a}_1 \oplus \cdots \oplus \mathbb{Z}[\xi]/\mathfrak{a}_r. 
\end{equation*}
Therefore $\Fitt_{\mathbb{Z}[\xi]}(e_\xi M) = \mathfrak{a}_1 \cdots \mathfrak{a}_r$ and 
\begin{equation*}
(\mathbb{Z}[\xi] : \Fitt_{\mathbb{Z}[\xi]}(e_\xi M)) = N_{\mathbb{Q}(\xi)/\mathbb{Q}}(\mathfrak{a}_1 \cdots \mathfrak{a}_r) = |e_\xi M|.
\end{equation*}
It follows that $(\mathcal{O}_G^- : \Fitt_{\mathcal{O}_G^-}(M)) = \prod_{\xi \in X_{\rm odd}} |e_\xi M| = |M|$.
\end{proof}

\begin{lemma}\label{lem:p2}
Let $M$ be a finite $\mathbb{Z}[G]^-$-module. Let $p$ be a prime number not dividing $|G|$ and let $\psi$ be an odd irreducible $\mathbb{Z}_p$-character. Then
\begin{equation*}
|M^\psi| = |(\mathbb{Z}[G]^-/\Fitt_{\mathbb{Z}[G]^-}(M))^\psi| =  |(\mathcal{O}_G^-/\Fitt_{\mathcal{O}_G^-}(M))^\psi|. 
\end{equation*}
\end{lemma}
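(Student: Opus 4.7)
The plan is to reduce the statement to the semi-local situation at the prime $p$ and then apply Lemma~\ref{lem:p1} (or a direct structure theorem argument) on each $\psi$-component. The crucial input is that the hypothesis $p\nmid|G|$ forces $\mathbb{Z}_p[G]$ to be a maximal $\mathbb{Z}_p$-order in $\mathbb{Q}_p[G]$, so that after tensoring with $\mathbb{Z}_p$ the two rings $\mathbb{Z}[G]^-$ and $\mathcal{O}_G^-$ become equal, and decompose as a product
\begin{equation*}
\mathbb{Z}_p[G]^-\;=\;\mathcal{O}_G^-\otimes_{\mathbb{Z}}\mathbb{Z}_p\;\simeq\;\bigoplus_{\xi\in X_{\rm odd}}\bigoplus_{\mathfrak{p}\in S_{\xi,p}}\mathbb{Z}[\xi]_\mathfrak{p}\;=\;\bigoplus_{\psi\text{ odd}}\mathbb{Z}_p[\psi],
\end{equation*}
in view of \eqref{eq:QpG-} and the description following it. Each factor $\mathbb{Z}_p[\psi]$ is a complete discrete valuation ring.

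First I would translate the statement into the $p$-local picture. The $\psi$-component functor $N\mapsto N^\psi=N\otimes_{\mathbb{Z}[G]}\mathbb{Z}_p[\psi]$ factors as $N\mapsto N\otimes\mathbb{Z}_p\mapsto (N\otimes\mathbb{Z}_p)\otimes_{\mathbb{Z}_p[G]^-}\mathbb{Z}_p[\psi]$, and the second arrow is simply projection onto the corresponding factor of the product decomposition above. In particular, both quotients $\mathbb{Z}[G]^-/\Fitt_{\mathbb{Z}[G]^-}(M)$ and $\mathcal{O}_G^-/\Fitt_{\mathcal{O}_G^-}(M)$ have the same $\psi$-component as $\mathbb{Z}_p[G]^-/\Fitt_{\mathbb{Z}_p[G]^-}(M\otimes\mathbb{Z}_p)$, because the base-change property of Fitting ideals recalled before Lemma~\ref{lem:p1} gives
\begin{equation*}
\Fitt_{\mathbb{Z}[G]^-}(M)\cdot\mathbb{Z}_p[G]^-\;=\;\Fitt_{\mathbb{Z}_p[G]^-}(M\otimes\mathbb{Z}_p)\;=\;\Fitt_{\mathcal{O}_G^-}(M)\cdot\mathbb{Z}_p[G]^-.
\end{equation*}
This already settles the second equality in the lemma, and reduces the whole claim to showing
\begin{equation*}
|M^\psi|\;=\;\bigl|\mathbb{Z}_p[\psi]/\Fitt_{\mathbb{Z}_p[\psi]}(M^\psi)\bigr|.
\end{equation*}

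To finish, I would invoke the structure theorem for finitely generated torsion modules over the discrete valuation ring $\mathbb{Z}_p[\psi]$: we can write $M^\psi\simeq\bigoplus_{i=1}^r\mathbb{Z}_p[\psi]/\mathfrak{p}^{a_i}$ for its maximal ideal $\mathfrak{p}$, so that $\Fitt_{\mathbb{Z}_p[\psi]}(M^\psi)=\mathfrak{p}^{a_1+\cdots+a_r}$ by the direct-sum rule for Fitting ideals. Counting elements on both sides (each factor contributes $|\mathbb{Z}_p[\psi]/\mathfrak{p}|^{a_i}$) yields the required equality. Alternatively, the identity can be extracted from Lemma~\ref{lem:p1} applied to the finite $\mathcal{O}_G^-$-module $M$: since the $\psi$-component functor is exact and picks out exactly one local factor, the global equality there descends componentwise.

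There is no real obstacle; the only point requiring care is the maximal-order identification $\mathbb{Z}_p[G]^-=\mathcal{O}_G^-\otimes\mathbb{Z}_p$, which rests on the assumption $p\nmid|G|$ and allows both Fitting ideals and the $\psi$-decomposition to behave as simply as possible. Once this is in place, the argument is essentially the same as for Lemma~\ref{lem:p1}, localized at the prime $\mathfrak{p}$ corresponding to~$\psi$.
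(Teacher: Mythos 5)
Your argument is correct and follows the same path as the paper's proof: both use base change for Fitting ideals to pass to the $\psi$-component, the structure theorem for finite modules over the discrete valuation ring $\mathbb{Z}_p[\psi]$, the direct-sum rule for Fitting ideals, and a cardinality count. The paper is slightly more terse (it does not spell out the maximal-order identification $\mathcal{O}_G^-\otimes\mathbb{Z}_p=\mathbb{Z}_p[G]^-$, dismissing the second equality with the observation $(\mathcal{O}_G^-)^\psi=\mathbb{Z}_p[\psi]$), but the content is the same.
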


\begin{proof} 
We have $(\Fitt_{\mathbb{Z}[G]^-}(M))^\psi = \Fitt_{\mathbb{Z}_p[\psi]}(M^\psi)$. Since $M^\psi$ is a finite $\mathbb{Z}_p[\psi]$-module, there exist integers $c_1, \dots, c_r \geq 1$ such that
\begin{equation*}
M^\psi \simeq \bigoplus_{i=1}^r \mathbb{Z}_p[\psi]/\mathfrak{p}^{c_i} 
\end{equation*}
where $\mathfrak{p}$ is the prime ideal of $\mathbb{Z}_p[\psi]$. Then $\Fitt_{\mathbb{Z}[G]^-}(M)^\psi = \mathfrak{p}^{c}$ with $ c := c_1 + \cdots + c_r$ and therefore $|(\mathbb{Z}[G]^-/\Fitt_{\mathbb{Z}[G]^-}(M))^\psi| = (\mathbb{Z}_p[\xi] :\mathfrak{p}^c) = |M^\psi|$. The last equality is clear since $(\mathcal{O}_G^-)^\psi = \mathbb{Z}_p[\psi]$. 
\end{proof}

In what follows we will also use repeatedly the Tate cohomology of finite cyclic groups, see \cite[\S IX.1]{lang:book}. Let $A$ be a finite cyclic group with generator $a$ and let $M$  be a $\mathbb{Z}[A]$-module. The zero-th and first group of cohomology are defined by
\begin{equation*}
\hat{H}^0(A, M) := M^A/N_A(M) \quad\text{and}\quad \hat{H}^1(A, M) := \Ker (N_A: M \to M)/(1-a) M
\end{equation*}
where $N_A := \sum_{b \in A} b$ and $M^A$ is the submodule of elements in $M$ fixed by $A$. Let $N$ and $P$ be two other $\mathbb{Z}[A]$-modules such that the following short sequence is exact:
\begin{equation*}\xymatrix{
1 \ar[r] & M \ar[r] & N \ar[r] & P \ar[r] & 1.
}\end{equation*}
Then the hexagon below is also exact.
\begin{equation}\label{eq:hexa}\xymatrix{
 & \hat{H}^0(A, M) \ar[rd] &  \\
\hat{H}^1(A, P) \ar[ru] &  & \hat{H}^0(A, N) \ar[d] \\
\hat{H}^1(A, N) \ar[u] & & \hat{H}^0(A, P) \ar[ld] \\ 
& \hat{H}^1(A, M) \ar[lu] & \\
}\end{equation}
The Herbrand quotient of $M$ is defined by 
\begin{equation*}
Q(A, M) := \frac{|\hat{H}^0(A, M)|}{|\hat{H}^1(A, M)|}. 
\end{equation*}
The Herbrand quotient is multiplicative, that is for an exact short sequence as above, we have $Q(A, N) = Q(A, M) \, Q(A, P)$. The following result plays a crucial r\^ole in the next sections. It is a direct consequence of \cite[Cor.~IX.4.2]{lang:book}.
\begin{lemma}\label{lem:qu}
Let $E/F$ be a quadratic extension with Galois group $T$. Let $R \geq 0$ be the number of real places in $F$ that becomes complex in $E$. Then we have
\begin{equation*}
Q(T, U_E) = 2^{R-1}.  \tag*{\hspace{-1em}\qed}
\end{equation*}
\end{lemma}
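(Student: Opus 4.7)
The plan is to reduce $Q(T, U_E)$ to a computation on a permutation module by passing through the Herbrand--Artin description of units. First, since $\mu_E$ is finite, the short exact sequence $1 \to \mu_E \to U_E \to U_E/\mu_E \to 1$, combined with the multiplicativity of the Herbrand quotient and the fact that finite modules have Herbrand quotient $1$, gives $Q(T, U_E) = Q(T, U_E/\mu_E)$. As recalled in the proof of Proposition~\ref{prop:iso}, the theorem of Herbrand--Artin furnishes a $\mathbb{Q}[T]$-isomorphism $U_E \otimes \mathbb{Q} \cong \mathcal{X}_E \otimes \mathbb{Q}$, where $\mathcal{Y}_E$ denotes the permutation $\mathbb{Z}$-module on the set of infinite places of $E$ and $\mathcal{X}_E$ its augmentation submodule. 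The images of $U_E/\mu_E$ and $\mathcal{X}_E$ inside $\mathcal{X}_E \otimes \mathbb{Q}$ are commensurable $\mathbb{Z}[T]$-lattices, so applying multiplicativity to the short exact sequences relating them to their intersection yields $Q(T, U_E/\mu_E) = Q(T, \mathcal{X}_E)$.

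Next, I would apply multiplicativity to the augmentation sequence
\begin{equation*}
0 \to \mathcal{X}_E \to \mathcal{Y}_E \to \mathbb{Z} \to 0
\end{equation*}
and reduce to $Q(T, \mathcal{X}_E) = Q(T, \mathcal{Y}_E)/Q(T, \mathbb{Z})$. A direct computation for $\mathbb{Z}$ with trivial $T$-action gives $\hat{H}^0(T, \mathbb{Z}) = \mathbb{Z}/2\mathbb{Z}$ and $\hat{H}^1(T, \mathbb{Z}) = 0$, so $Q(T, \mathbb{Z}) = 2$.

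For $\mathcal{Y}_E$, I would decompose it according to the fibers of the natural map $S_\infty(E) \to S_\infty(F)$. Every real place of $F$ that splits in $E$ and every complex place of $F$ has two places of $E$ above it which $T$ swaps, contributing an induced summand isomorphic to $\mathbb{Z}[T]$; each of the $R$ real places of $F$ that become complex in $E$ has a single place above, fixed setwise by $T$, contributing a copy of $\mathbb{Z}$ with trivial action. Thus $\mathcal{Y}_E \cong \mathbb{Z}[T]^{a} \oplus \mathbb{Z}^{R}$ for some $a \geq 0$. Since $\mathbb{Z}[T]$ is induced and hence cohomologically trivial ($Q = 1$), multiplicativity yields $Q(T, \mathcal{Y}_E) = 2^{R}$, whence $Q(T, U_E) = Q(T, \mathcal{X}_E) = 2^{R-1}$, as claimed. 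The only non-routine point to verify carefully is the commensurability step passing from $U_E/\mu_E$ to $\mathcal{X}_E$; this is the classical fact that the Herbrand quotient is invariant under finite-index inclusions of $\mathbb{Z}[T]$-lattices in a common $\mathbb{Q}[T]$-module, and it is precisely the content of Lang's \cite[Cor.~IX.4.2]{lang:book} cited by the author.
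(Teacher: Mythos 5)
Your proof is correct and unfolds exactly the standard argument that lies behind the result the paper invokes; since the paper proves the lemma by citation alone, this amounts to the same approach in expanded form. The chain $Q(T,U_E)=Q(T,U_E/\mu_E)=Q(T,\mathcal{X}_E)$, the augmentation sequence giving $Q(T,\mathcal{X}_E)=Q(T,\mathcal{Y}_E)/Q(T,\mathbb{Z})$ with $Q(T,\mathbb{Z})=2$, and the fiberwise decomposition $\mathcal{Y}_E\cong\mathbb{Z}[T]^a\oplus\mathbb{Z}^R$ (real split and complex places contribute a free $\mathbb{Z}[T]$, real ramified places contribute a trivial $\mathbb{Z}$) are all correct, and the bookkeeping of split versus ramified infinite places is right. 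One small misattribution: the invariance of the Herbrand quotient under passage between commensurable $\mathbb{Z}[T]$-lattices (equivalently, its dependence only on the $\mathbb{Q}[T]$-module $M\otimes\mathbb{Q}$) is an elementary preliminary lemma, not the content of Lang's Cor.~IX.4.2; that corollary \emph{is} essentially the statement being proved here (the Herbrand quotient of units, or more generally of $S$-units, expressed via the local degrees at infinity). This does not affect the correctness of your argument, but it means you are re-deriving rather than using the cited result.
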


We use this result in the following way. Assume that $R \geq 1$. Write $\bar{U}_F$ and $\bar{U}_E$ for the group of units of $F$ and $E$ respectively modulo $\{\pm 1\}$. Then we have
\begin{equation*}
\hat{H}^0(T, U_E) = \frac{U_F}{\mathcal{N}_{E/F}(U_E)} =  \{\pm 1\} \frac{\bar{U}_F}{\mathcal{N}_{E/F}(\bar{U}_E)}
\end{equation*}
since $-1$ cannot be a norm in $E/F$. It follows from the lemma that $|\hat{H}^0(T, U_E)|$ is divisible by $2^{R-1}$ and therefore
\begin{equation}\label{eq:bounde}
2^{R-2} \mid (\bar{U}_F:\mathcal{N}_{E/F}(\bar{U}_F)).  
\end{equation}

In some cases we will not be able to get non trivial lower bounds with that method, but still be able to deduce that $\hat{H}^1(T, U_E)$ is trivial. In this situation, we have the following lemma.
\begin{lemma}\label{lem:h11}
Let $E/F$ be a quadratic extension with Galois group $T$. Assume that $\hat{H}^1(T, U_E)$ is trivial. Then either $E/F$ is unramified at finite places or there exists an element of order $2$ in the kernel of the norm map from $\Cl_E$ to $\Cl_F$. 
\end{lemma}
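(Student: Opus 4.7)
The plan is as follows: assuming $E/F$ is ramified at some finite prime $\mathfrak{p}$, I will exhibit a non-trivial class of order $2$ in $\Ker(N_{E/F}:\Cl_E \to \Cl_F)$. Write $\mathfrak{p}\mathcal{O}_E = \mathfrak{P}^2$, let $n$ denote the order of $[\mathfrak{p}]$ in $\Cl_F$, and pick $y \in F^\times$ with $\mathfrak{p}^n = (y)$. The candidate class is $c := [\mathfrak{P}^n] \in \Cl_E$.

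That $c$ has the right shape is elementary: since $\mathfrak{P}^{2n} = \mathfrak{p}^n\mathcal{O}_E = (y)\mathcal{O}_E$, we have $2c = 0$ in $\Cl_E$; and since $N_{E/F}(\mathfrak{P}^n) = \mathfrak{p}^n = (y)$ is principal in $F$, the class $c$ lies in $\Ker(N_{E/F})$. The whole content of the lemma therefore reduces to showing $c \neq 0$.

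For this, suppose for contradiction that $\mathfrak{P}^n = (\pi)$ for some $\pi \in E^\times$. Since $\mathfrak{P}^n$ is $T$-stable, $u := \pi^\tau/\pi$ is a unit of $E$, and a direct computation gives $N_{E/F}(u) = u \cdot u^\tau = 1$. The hypothesis $\hat{H}^1(T,U_E) = 0$ now produces $v \in U_E$ with $u = v/v^\tau$. Setting $\pi_0 := v\pi$, one verifies $\pi_0^\tau = v^\tau \cdot (v/v^\tau)\,\pi = v\pi = \pi_0$, so $\pi_0 \in F^\times$. But the equality $(\pi_0)\mathcal{O}_E = (\pi)\mathcal{O}_E = \mathfrak{P}^n$ combined with $\pi_0 \in F$ forces $v_\mathfrak{p}(\pi_0) = n/2$; hence $n$ must be even and $\mathfrak{p}^{n/2}$ must be principal in $F$, contradicting the definition of $n$ as the order of $[\mathfrak{p}]$ in $\Cl_F$ (in the case $n = 1$, the contradiction is even cleaner: the valuation $n/2 = 1/2$ is not an integer).

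The main subtlety lies entirely in the choice of exponent $n$: it is calibrated precisely so that $c$ is automatically $2$-torsion and in the kernel of the norm, while the vanishing of $\hat{H}^1(T,U_E)$ is exactly the tool needed to descend any putative generator of $\mathfrak{P}^n$ in $E$ to a generator of a power of $\mathfrak{p}$ in $F$, producing the desired contradiction with the minimality of $n$.
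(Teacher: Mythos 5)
Your proof is correct and takes essentially the same approach as the paper's: both use the vanishing of $\hat{H}^1(T,U_E)$ to descend a $T$-stable principal ideal of $E$ (a power of $\mathfrak{P}$) to a principal ideal of $F$, and this descent is what produces the $2$-torsion class in $\Ker(N_{E/F})$. The differences are cosmetic --- the paper argues directly via the identification $P_F \simeq P_E^T$ (from the long exact sequence for $1 \to U_E \to E^\times \to P_E \to 1$) and the order of $[\mathfrak{P}]$ in $\Cl_E$, whereas you carry out the Hilbert~90 computation explicitly and argue by contradiction from the order of $[\mathfrak{p}]$ in $\Cl_F$.
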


\begin{proof}
Consider the submodules of elements fixed by $T$ in the short exact sequence 
\begin{equation*}\xymatrix{
1 \ar[r] & U_E \ar[r] & E^\times \ar[r] & P_E \ar[r] & 1. 
}\end{equation*}
We get
\begin{equation*}\xymatrix{
1 \ar[r] & U_F \ar[r] & F^\times \ar[r] & P_E^T \ar[r] & \hat{H}^1(T, U_E) \ar[r] & \cdots. 
}\end{equation*}
Since $\hat{H}^1(T, U_E) = 1$ by hypothesis, it follows that $P_F \simeq P_E^T$. The isomorphism is the natural map that sends $\mathfrak{a} \in P_F$ to $\mathfrak{a}\mathbb{Z}_E \in P_E^T$. Assume that there is a prime ideal $\mathfrak{p}$ of $F$ that ramifies in $E/F$. Let $\mathfrak{P}$ be the unique prime ideal of $E$ above $\mathfrak{p}$ and let $h \geq 1$ be the order of $\mathfrak{P}$ in $\Cl_E$. Since $\mathfrak{P}^h \in P_E^T$, there exists a principal ideal $\mathfrak{a} \in P_F$ such that $\mathfrak{P}^h = \mathfrak{a}\mathbb{Z}_E$. Clearly $\mathfrak{a}$ is a power of $\mathfrak{p}$. Looking at valuations at $\mathfrak{P}$, it follows that $h$ is even. We set $\mathfrak{C} := \mathfrak{P}^{h/2}$. Its class is an element of order $2$ in $\Cl_E$. But $\mathcal{N}_{E/F}(\mathfrak{C}) = \mathfrak{p}^{h/2} = \mathfrak{a}$ is a principal ideal. This concludes the proof.  
\end{proof}

To conclude this section we prove the conjecture in our settings when $K/k$ is a quadratic extension. This result is proved in full generality in \cite[Th.~IV.5.4]{tate:book}.

\begin{theorem}\label{th:quadratic}
Let $K/k$ be a quadratic extension and $S \supset S(K/k)$ be a finite set of places of $k$ satisfying \textup{(A1)}, \textup{(A2)} and \textup{(A3)}. Then the abelian rank one Stark conjecture is satisfied for the extension $K/k$ and the set $S$ with the Stark unit being the unique solution, up to trivial units, of \textup{(P1)} and \textup{(P2)}. Moreover the Stark unit is a square in $K$ if  and only if $e+t_S+c \geq 1$ where $c$ is the $2$-valuation of the order of $\Cl_K^-$. In particular, if $d \geq 4$ then it always a square and, in fact, it is a $2^{d-3}$-th power. It is also a square if $d = 3$ and the extension $K/k$ is ramified at some finite prime.
\end{theorem}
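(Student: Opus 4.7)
In the quadratic case $m=1$, so $K^+ = k$ and $\mathbb{Q}[G]^-\cong\mathbb{Q}$. By Proposition~\ref{prop:iso}, $U_K^-$ is a free $\mathbb{Z}$-module of rank one; I will fix a generator $\bar\eta$ and take as candidate
\[
\bar\varepsilon := 2^{e+t_S}|\Cl_K^-|\cdot\bar\eta.
\]
Property (P1) then holds by construction. For (P2), I will use that $\tau$ acts as $-1$ on both $U_K^-/\mathbb{Z}\cdot\bar\varepsilon$ and $\Cl_K^-$, so for each odd prime $p$ and the unique odd $\psi\in\hat G_p$ each $\psi$-component is simply the $p$-primary part; the two orders agree because $p$ is odd and only the $2$-parts of $n := 2^{e+t_S}|\Cl_K^-|$ and $|\Cl_K^-|$ differ.

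Next I will verify Stark's three conditions for this $\bar\varepsilon$. Part~(3) is automatic from $|S(K/k)|\geq 3$. Part~(2) is formal in the quadratic case: $\tau(\varepsilon) = \varepsilon^{-1}$ gives $K(\sqrt\varepsilon) = K(\sqrt{\varepsilon^{-1}})$, so the extension is $\tau$-stable and Galois over $k$ of degree dividing 4; any lift $\sigma$ of $\tau$ satisfies $\sigma^2(\sqrt\varepsilon) = \sigma(\pm 1/\sqrt\varepsilon) = \pm\sqrt\varepsilon$, so every element of $\Gal(K(\sqrt\varepsilon)/k)$ has order at most 2 and the group is abelian. For part~(1), Proposition~\ref{prop:prodform} with a single odd character $\chi$ reduces to $L'_{K/k,S}(0,\chi) = \pm\log|\varepsilon|_w$, and the sign is corrected, if necessary, by replacing $\bar\varepsilon$ with the trivial-unit conjugate $\tau\cdot\bar\varepsilon = -\bar\varepsilon$ (which lifts to $\varepsilon^{-1}$), a change that preserves both (P1) and (P2).

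Uniqueness up to trivial units is then immediate: any $\bar\varepsilon'\in U_K^-$ satisfying (P1) must generate the same subgroup of the rank-one lattice $U_K^-$, so $\bar\varepsilon' = \pm\bar\varepsilon$, and the sign flip is realized by the trivial unit $\pm\tau$, in accordance with Corollary~\ref{cor:unicity}. Writing $\bar\varepsilon = n\bar\eta$ with $n = 2^{e+t_S}|\Cl_K^-|$, the positive representative $\varepsilon$ is a $2^a$-th power in $K$ exactly when $a$ is at most the $2$-adic valuation of $n$, namely $e+t_S+c$; in particular $\varepsilon$ is a square if and only if $e+t_S+c\geq 1$, proving the iff-condition of the theorem.

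For the remaining claims I will note that $K/k$ has $R = d-1$ real places of $k$ becoming complex in $K$, so Lemma~\ref{lem:qu} combined with~\eqref{eq:bounde} yields $e \geq R-2 = d-3$. Hence for $d\geq 4$ we have $e \geq 1$, and in fact $\varepsilon$ is a $2^{d-3}$-th power. For $d = 3$ with $K/k$ ramified at some finite prime, either $e \geq 1$ and we are done, or $e = 0$, in which case the same Herbrand quotient computation forces $|\hat H^1(T,U_K)|=1$, and Lemma~\ref{lem:h11} applied to the ramified extension $K/k$ produces an element of order 2 in $\Cl_K^-$, giving $c \geq 1$. The main obstacle is precisely this $d=3$, $e=0$ sub-case: the Herbrand quotient on its own does not force $e \geq 1$, and one must use both the ramification hypothesis and Lemma~\ref{lem:h11} to extract the needed 2-torsion from $\Cl_K^-$.
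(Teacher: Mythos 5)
Your proof is correct and follows essentially the same route as the paper: take a generator $\bar\eta$ of the rank-one lattice $U_K^-$, set the candidate to $2^{e+t_S}|\Cl_K^-|\cdot\bar\eta$, check (P1) and (P2) by construction, deduce part~(1) of the conjecture from Proposition~\ref{prop:prodform} with a sign fix, prove uniqueness from the rank-one structure, and analyze the power-of-two question via $e,t_S,c$ together with Lemmas~\ref{lem:qu} and~\ref{lem:h11}. The one place you diverge is part~(2), where you argue directly with Galois groups rather than using the criterion, recalled in Section~3, that the extension is abelian if and only if $(g-1)\cdot\bar\eta\in 2\cdot\bar U_K$ for all $g\in G$ (here simply $(\tau-1)\cdot\bar\eta=-2\cdot\bar\eta$). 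Your argument reaches the right conclusion, but the intermediate claim is a little off: from $\sigma^2(\sqrt\varepsilon)=\pm\sqrt\varepsilon$ one cannot yet conclude that $\sigma$ has order at most $2$ (the minus sign would give $\sigma$ order $4$). In fact, if $\sigma(\sqrt\varepsilon)=c/\sqrt\varepsilon$ with $c=\pm1$, then $\sigma^2(\sqrt\varepsilon)=c/(c/\sqrt\varepsilon)=\sqrt\varepsilon$, so the two signs cancel and $\sigma^2=1$ exactly; or, more simply, a Galois group of order dividing $4$ is automatically abelian, so no element-order computation is needed. The paper's criterion is the route that generalizes without change to the quartic and sextic cases, which is why it is preferred there.
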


\begin{proof}
The only non trivial element of $G$ is $\tau$. Let $\chi$ be the character that sends $\tau$ to $-1$. It is the only non trivial character of $G$ and also the only odd character. We have $\mathbb{Z}[G]^- = \mathcal{O}_G^- = e^- \mathbb{Z} \simeq \mathbb{Z}$. In particular, using Proposition~\ref{prop:iso}, it is direct to see that there exists $\bar\theta \in U_K^-$ such that $U_K^- = \mathbb{Z} \cdot \bar\theta$. Define
\begin{equation*}
\bar\eta := 2^{e+t_S} |\Cl_K^-|  \cdot \bar\theta.
\end{equation*}
From its construction, it is clear that $\bar\eta$ satisfies (P1) and (P2). It follows from Proposition~\ref{prop:prodform}, and replacing $\eta$ by $\eta^{-1}$ if necessary, that 
\begin{equation*}
\frac{1}{2} \sum_{g \in G} \chi(g) \log |\eta^g| =  L'_{K/k}(0, \chi). 
\end{equation*}
This proves part ~(1)  of the conjecture. Part~(3) is direct by construction. It remains to prove part~(2). But $(\tau - 1) \cdot \bar\eta = -2 \cdot \bar\eta$ so part (2) follows and the conjecture is proved in this case. Finally, from its definition, it is clear that $\eta$ is a $2^r$-th power in $K^\times$ if and only if $e+t_S+c \geq r$. Now, by~\eqref{eq:bounde}, we have $e \geq d - 3$ and therefore the Stark unit is always a square if $d \geq 4$. Assume that $d = 3$ and that $\eta$ is not a square. Then $e = 0$ and $|\hat{H}^0(G, U_K)| = 2$. From Lemma~\ref{lem:qu}, we get $\hat{H}^1(G, U_K) = 1$ and therefore, since $c = 0$, the extension $K/k$ is unramified at finite places by Lemma~\ref{lem:h11} 
\end{proof}

When $d = 2$, there exist extensions for which the Stark unit is a square and extensions for which it is not a square. Using the PARI/GP system \cite{PARI}, we find the following examples.\footnote{PARI/GP was also used to find the examples given in the next two sections.} Let $k := \mathbb{Q}(\sqrt{5})$ and let $v_1, v_2$ denote the two infinite places of $k$ with $v_1(\sqrt{5}) < 0$ and $v_2(\sqrt{5}) > 0$. Let $K$ be the ray class field modulo $\mathfrak{p}_{11} v_2$ where $\mathfrak{p}_{11} := (1/2 +  3\sqrt{5}/2)$ is one of the two prime ideals above $11$. Then $K/k$ is a quadratic extension that satisfies (A1), (A2) and (A3) with $S := S(K/k)$, and one can prove that the corresponding Stark unit is not a square. Now, on the other hand, let $K$ be the ray class field modulo $\sqrt{5}\mathfrak{q}_{11} v_1$, where $\mathfrak{q}_{11} := (1/2 - 3\sqrt{5}/2)$ is the other prime ideal above $11$. Then $K/k$ is a quadratic extension that satisfies (A1), (A2) and (A3) with $S := S(K/k)$ and, in this case, the Stark unit is a square. When $d = 3$ and $K/k$ is unramified both cases are possible. Indeed, let $k := \mathbb{Q}(\alpha)$ where $\alpha^3 - \alpha^2 - 13\alpha + 1 = 0$. It is a totally real cubic field. Let $v_1, v_2, v_3$ be the three infinite places of $k$ with $v_1(\alpha) \approx -3.1829$, $v_2(\alpha) \approx 0.0765$ and $v_3(\alpha) \approx 4.1064$. Let $K$ be the ray class field of $k$ of conductor $\mathbb{Z}_k v_2v_3$. Then $K/k$ is a quadratic extension that satisfies (A1), (A2) and (A3) with $S := S(K/k)$, and that is unramified at finite places. One can prove in this setting that the Stark unit is not a square. On the other hand, let $k := \mathbb{Q}(\beta)$ with $\beta^3 - \beta^2 - 24\beta - 35 = 0$. It is a totally real cubic field. Let $v_1, v_2, v_3$ be the three infinite places of $k$ with $v_1(\alpha) \approx -3.0999$, $v_2(\alpha) \approx -1.8861$, and $v_3(\alpha) \approx  5.9860$. Let $K$ be the unique quadratic extension $k$ of conductor $\mathbb{Z}_k v_2v_3$. Then $K/k$ satisfies (A1), (A2) and (A3) with $S := S(K/k)$ and is unramified at finite places. One can prove that $k$ is principal and the class number of $K$ is $2$. Therefore the Stark unit in this case is a square. 

\section{Cyclic quartic extensions}

The goal of this section is to prove the following result.

\begin{theorem}\label{th:quartic}
Let $K/k$ be a cyclic quartic extension and $S \supset S(K/k)$ be a finite set of places of $k$ satisfying \textup{(A1)}, \textup{(A2)} and \textup{(A3)}. Then there exists $\bar\eta \in U_K^-$ satisfying \textup{(P1)} and \textup{(P2)}. Furthermore, $\bar\eta$ is unique up to a trivial unit, satisfies for all $\chi \in \hat{G}$
\begin{equation*}
\left|L'_{K/k,S}(0, \chi)\right| = \frac{1}{2} \left|\sum_{g \in G} \chi(g) \log |\eta^g|\right|
\end{equation*}
and the extension $K(\sqrt{\eta})/k$ is abelian.
\end{theorem}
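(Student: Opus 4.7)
The plan is to exploit the identification $\mathbb{Z}[G]^- = \mathcal{O}_G^- \cong \mathbb{Z}[i]$. Writing $G = \langle\sigma\rangle$ with $\tau = \sigma^2$, there is a unique irreducible odd rational character $\xi$ of $G$, with $\mathbb{Q}(\xi) = \mathbb{Q}(i)$; a direct check shows that the map $e^- \mapsto 1$, $e^-\sigma \mapsto i$ gives a ring isomorphism $\mathbb{Z}[G]^- \to \mathbb{Z}[i]$, so $\mathbb{Z}[G]^-$ is already the maximal order. Since $\bar{U}_K$ is torsion-free and $U_K^-$ has $\mathbb{Q}(i)$-rank one by Proposition~\ref{prop:iso}, and $\mathbb{Z}[i]$ is a principal ideal domain, the module $U_K^-$ is free of rank one; I fix a generator $\bar\theta$, so that $U_K^- = \mathbb{Z}[i] \cdot \bar\theta$.

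To produce a solution, let $\alpha_0 \in \mathbb{Z}[i]$ generate $\Fitt_{\mathbb{Z}[i]}(\Cl_K^-)$, so that $N_{\mathbb{Q}(i)/\mathbb{Q}}(\alpha_0) = |\Cl_K^-|$ by Lemma~\ref{lem:p1}, and set $\bar\eta := (1+i)^{e+t_S}\alpha_0 \cdot \bar\theta$. The identity $(U_K^- : \mathbb{Z}[G]\cdot\bar\eta) = N((1+i)^{e+t_S}\alpha_0) = 2^{e+t_S}|\Cl_K^-|$ gives (P1); since $(1+i)$ is a unit in $\mathbb{Z}_p[i]$ for every odd $p$, the $\psi$-components of $\mathbb{Z}[i]/(\bar\eta)$ and $\mathbb{Z}[i]/(\alpha_0)$ coincide, and Lemma~\ref{lem:p2} yields (P2). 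Uniqueness up to a trivial unit follows from Corollary~\ref{cor:unicity}: any other solution differs from $\bar\eta$ by a $B$-unit $u \in \mathbb{Q}(i)^\times$ with $B \subseteq \{2\}$, hence of the form $\pm i^{k}(1+i)^{n}$; the determinant condition from Section~4 (required for (P1) to survive multiplication by $u$) forces $|N_{\mathbb{Q}(i)/\mathbb{Q}}(u)| = 2^{|n|} = 1$, so $n=0$, and the surviving units $\pm 1, \pm i$ are precisely the trivial units of $\mathbb{Z}[G]^-$.

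The $L'$-value identity follows from Proposition~\ref{prop:prodform}: the two odd characters $\chi, \bar\chi$ of $G$ are complex conjugates, as are the factors $\sum_g \chi(g)\log|\eta^g|$ and $L'_{K/k,S}(0,\chi)$, so the product formula collapses to $\tfrac14\bigl|\sum_g\chi(g)\log|\eta^g|\bigr|^{2} = \pm |L'_{K/k,S}(0,\chi)|^{2}$; non-negativity fixes the sign and gives the claim for odd $\chi$. For even $\chi$ both sides vanish: the right-hand side because $\log|\eta^{g\tau}| = -\log|\eta^g|$ while $\chi(g\tau) = \chi(g)$, and the left-hand side because every non-trivial even $\chi$ is trivial on every decomposition group of an infinite place (trivial for $v$, equal to $\langle\tau\rangle$ for the others), giving order of vanishing at $s=0$ at least $d \geq 2$; for $\chi = 1$ the order is $|S|-1 \geq 2$.

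The hardest step is showing that $K(\sqrt\eta)/k$ is abelian. By \cite[Prop.~IV.1.2]{tate:book} it suffices to prove $\eta^{\sigma-1} \in K^{\times 2}$, equivalently $(\sigma-1)\bar\eta \in 2\bar U_K \cap U_K^- = 2U_K^-$. Under the identification, with $\sigma \leftrightarrow i$ and using $(i-1) = (1+i)$ in $\mathbb{Z}[i]$, this reduces to $v_{1+i}(\alpha) \geq 1$, that is,
\[
e + t_S + c \geq 1, \qquad c := v_2(|\Cl_K^-|).
\]
When $d \geq 3$ this is automatic from \eqref{eq:bounde}, which gives $e \geq (d-1)m - 2 = 2d-4 \geq 2$. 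The delicate case is $d = 2$, and here I expect to rule out the bad combination $e = t_S = c = 0$ by a cohomological argument on the $\langle\tau\rangle$-cohomology of $U_K$: Lemma~\ref{lem:qu} gives the Herbrand quotient, Lemma~\ref{lem:h11} forces, when $\hat H^{1}(\langle\tau\rangle, U_K) = 1$, either $K/K^+$ unramified at finite places or a $2$-torsion class in the kernel of the norm on class groups, and Chevalley's ambiguous class number formula applied to $K/K^+$ should in each sub-case contradict either (A3) (which would then force $t_S \geq 1$) or $c = 0$.
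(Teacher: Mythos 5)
Your construction of $\bar\eta$, the verification of (P1) and (P2), the uniqueness argument via Corollary~\ref{cor:unicity} plus the norm condition, and the derivation of the $L'$-value identity from Proposition~\ref{prop:prodform} all follow essentially the same route as the paper. The one genuine gap is the final step. For $d \geq 3$ you correctly read off $e \geq 2$ from \eqref{eq:bounde}, and you correctly reduce the abelianity of $K(\sqrt\eta)/k$ to $e+t_S+c \geq 1$. But for $d = 2$ you only \emph{sketch} a plan: ``I expect to rule out $e=t_S=c=0$ \dots\ Chevalley's ambiguous class number formula \dots\ should in each sub-case contradict either (A3) \dots\ or $c=0$.'' This is not a proof, and the proposed mechanism is off: (A3) is a standing hypothesis, so nothing is going to ``contradict'' it, and it is not clear that Chevalley's formula applied to $K/K^+$ (with only archimedean ramification) lets you close the case cleanly.

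The paper's argument for $d=2$, $e=0$ is both shorter and uses different inputs. From Lemma~\ref{lem:qu} with $R=m(d-1)=2$ one gets $Q(\langle\tau\rangle,U_K)=2$, and since $|\hat H^0|=2\cdot 2^e=2$ this forces $\hat H^1(\langle\tau\rangle,U_K)=1$. Lemma~\ref{lem:h11} then gives a dichotomy: either $c\geq 1$ (done), or $K/K^+$ is unramified at finite places. In the latter case, because $d=2$, the bound $|S(K/k)|\geq 3$ (the quoted Lemma~2.8 of \cite{stark12}) forces at least one finite prime $\mathfrak{p}$ of $k$ to ramify in $K/k$; such a $\mathfrak{p}$ lies in $S$, its primes above in $K^+$ are not ramified in $K/K^+$, so by (A3) they are inert, hence $t_S\geq 1$. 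That is the missing piece: you need to \emph{use} (A3) together with the lower bound on $|S(K/k)|$, not contradict it, and no appeal to Chevalley's formula is required.
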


\begin{proof}
Denote by $\gamma$ a generator of $G$, therefore $\tau = \gamma^2$. Let $\chi$ be the character of $G$ such that $\chi(\gamma) = i$ and let $\xi := \chi + \chi^3$ be the only element in $X_\mathrm{odd}$. From the results of Section~4, we have
\begin{equation*}
\mathbb{Q}[G]^- = e^- \mathbb{Q}[G] \simeq \mathbb{Q}(i)
\end{equation*}
where the isomorphism sends any element of $x \in \mathbb{Q}[G]^-$, written uniquely as $x = e^- (a + b\gamma)$ for $a, b \in \mathbb{Q}$, to $\chi(x) = a+bi$. In particular, we have $\mathbb{Z}[G]^- = \mathcal{O}_G^- \simeq \mathbb{Z}[i]$ and $\mathbb{Z}[G]^-$ is a principal ring. By Proposition~\ref{prop:iso}, this implies that there exists $\bar\theta \in U_K^-$ such that $U_K^- = \mathbb{Z}[G]^- \cdot \bar\theta$. 

We now prove the unicity of the solution. Assume that $\bar\eta$ and $\bar\eta'$ are two solutions to (P1) and (P2). By Corollary~\ref{cor:unicity}, there exists a unique $2$-unit $u$ in $\mathbb{Q}[G]^-$ such that $\bar\eta' = u \cdot \bar\eta$. Let $\mathfrak{p}_2 := (i+1)\mathbb{Z}[i]$ be the unique prime ideal above $2$ in $\mathbb{Z}[i]$. Let $n := v_{\mathfrak{p}_2}(\chi(u))$. Assume, without loss of generality, that $n \geq 0$ (otherwise, exchange $\bar\eta$ and $\bar\eta'$ and replace $u$ by $u^{-1}$) and therefore $\bar\eta' \in \mathbb{Z}[G]^- \cdot \bar\eta$. Let $x \in \mathbb{Z}[G]^-$ be such that $\bar\eta = x \cdot \bar\theta$. We have
\begin{align*}
(\mathbb{Z}[G]^- \cdot \bar\eta : \mathbb{Z}[G]^- \cdot \bar\eta') & = (x \mathbb{Z}[G]^- : u x \mathbb{Z}[G]^-) \\
& = (\chi(x) \mathbb{Z}[i] : \chi(u) \chi(x) \mathbb{Z}[i]) \\
& = |\chi(u)| = 2^n.
\end{align*}
Therefore $n = 0$ and $u$ is a unit. Since $\mathbb{Z}[G]^-$ has only trivial units, it follows that $u$ is a trivial unit. This proves the unicity statement. 

Next we prove that there exist solutions to (P1) and (P2). Let $\mathcal{F} := \Fitt_{\mathbb{Z}[G]^-}(\Cl_K^-)$ be the Fitting ideal of $\mathrm{Cl}_K^-$ as a $\mathbb{Z}[G]^-$-module. Let $f$ be a generator of $\mathcal{F}$. We set $\bar\eta := f \, (\gamma+1)^{e+t_S} \cdot \bar\theta$. We have by Lemma~\ref{lem:p1}
\begin{equation*}
(U_K^- : \mathbb{Z}[G] \cdot \bar\eta) = 2^{e+t_S} (\mathbb{Z}[G]^- : \mathcal{F}) = 2^{e+t_S} |\Cl_K^-|.
\end{equation*}
Thus $\bar\eta$ is a solution to (P1). In the same way it follows directly from Lemma~\ref{lem:p2} that it is a solution to (P2). 

Now, since $\bar\eta \in U_K^-$, we have for $\nu = \chi_0$, the trivial character, or $\nu = \chi^2$ that
\begin{equation*}
\frac{1}{2} \sum_{g \in G} \nu(g) \log |\eta^g| = 0.
\end{equation*}
On the other hand, $L'_{K/k, S}(\nu, 0) = 0$ follows directly from \cite[Prop.~I.3.4]{tate:book}. From Proposition~\ref{prop:prodform}, using the fact that $\chi^3 = \bar\chi$, we get\footnote{Note that is easy to see that both sides of \eqref{eq:factform} are positive in this case.}
\begin{multline*}
\left|L'_{K/k,S}(0, \chi)\right|^2 = L'_{K/k,S}(0, \chi) L'_{K/k,S}(0, \chi^3) \\
 = \left(\frac{1}{2} \sum_{g \in G} \chi(g) \log |\eta^g|\right) \left(\frac{1}{2} \sum_{g \in G} \chi^3 (g) \log |\eta^g|\right) \\
= \left|\frac{1}{2} \sum_{g \in G} \chi(g) \log |\eta^g|\right|^2
\end{multline*}
and the equality to be proved follows by taking square-roots. 

Finally, to prove that $K(\sqrt{\eta})/k$ is abelian, we need to prove that $(\gamma - 1) \cdot \bar\eta \in 2 \cdot U_K^-$ by \cite[Prop.~IV.1.2]{tate:book}. This is equivalent to prove that
\begin{equation*}
(i-1)(i+1)^{e+t_S} \chi(f) \subset 2 \mathbb{Z}[i],
\end{equation*}
that is one of the following assertions is satisfied: $e \geq 1$, $t_S \geq 1$ or $2$ divides $|\Cl_K^-|$. We have $e \geq 2d-4$ by~\eqref{eq:bounde} and therefore the result is proved if $d \geq 3$. Assume that $d = 2$ and $e = 0$. Then it follows by Lemma~\ref{lem:qu} that $\hat{H}^1(T, U_K) = 1$ where $T := \langle \tau\rangle$. By Lemma~\ref{lem:h11} this implies that either $2$ divides $|\Cl_K^-|$ and the result is proved, or $K/K^+$ is unramified at finite places. Assume the latter. At least one prime ideal of $k$ ramifies in $K$ by the proof of \cite[Lem.~2.8]{stark12} since $k$ is a quadratic field. By (A3) this prime ideal is inert in $K/K^+$, thus $t_S \geq 1$. This concludes the proof. 
\end{proof}

A consequence of this result is that we can say quite precisely when the Stark unit, it it exists, is a square in that case. The result is very similar to the situation in the quadratic case (see Theorem~\ref{th:quadratic}). 

\begin{corollary}\label{cor:square4}
Under the hypothesis of the theorem and assuming that the Stark unit exists, then it is a square in $K$ if and only if $e+t_S+c \geq 2$ where $c$ is the $2$-valuation of $|\Cl_K^-|$. In particular, if $d \geq 3$ then it is always a square and, in fact, it is a $2^{d-2}$-th power. 
\end{corollary}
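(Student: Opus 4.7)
The plan is to leverage the essentially unique solution $\bar\eta$ to (P1) and (P2) produced in the proof of Theorem~\ref{th:quartic}, translate ``$\eta$ is a $2^r$-th power in $K$'' into a single scalar divisibility in $\mathbb{Z}[i] \simeq \mathbb{Z}[G]^-$, and then read both parts of the corollary off the shape of the generator.

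First, since the Stark unit $\varepsilon$ exists by hypothesis it satisfies (P1) and (P2), so the uniqueness clause of Theorem~\ref{th:quartic} gives $\bar\varepsilon = u \cdot \bar\eta$ for some trivial unit $u$ of $\mathbb{Z}[G]^-$. Such a $u$ only conjugates $\bar\eta$ and possibly inverts it, so $\varepsilon$ is a $2^r$-th power in $K$ iff $\eta$ is. Using that $\bar U_K$ is torsion-free under (A1) (its only roots of unity are $\pm 1$), the latter is equivalent to $\bar\eta \in 2^r \cdot U_K^-$. Recall the explicit formula $\bar\eta = f\,(\gamma+1)^{e+t_S} \cdot \bar\theta$, where $f$ generates $\Fitt_{\mathbb{Z}[G]^-}(\Cl_K^-)$ and $\bar\theta$ generates $U_K^-$ over $\mathbb{Z}[G]^-$. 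Under the isomorphism $\mathbb{Z}[G]^- \simeq \mathbb{Z}[i]$ sending $e^-\gamma \mapsto i$, the membership $\bar\eta \in 2^r \cdot U_K^-$ becomes $\chi(f)\,(1+i)^{e+t_S} \in 2^r\mathbb{Z}[i]$. Writing $\mathfrak{p}_2 := (1+i)\mathbb{Z}[i]$, so that $2\mathbb{Z}[i] = \mathfrak{p}_2^{2}$ and $v_{\mathfrak{p}_2}(1+i)=1$, this reduces to the single inequality
\[
v_{\mathfrak{p}_2}(\chi(f)) + e + t_S \geq 2r.
\]

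Next I would compute $v_{\mathfrak{p}_2}(\chi(f))$. By Lemma~\ref{lem:p1} one has $(\mathbb{Z}[i]:\chi(f)\mathbb{Z}[i]) = |\Cl_K^-|$, and localising at $\mathfrak{p}_2$ isolates the $2$-primary part of this quotient; since $\mathfrak{p}_2$ is the only prime of $\mathbb{Z}[i]$ above $2$ and has residue field $\mathbb{F}_2$, one concludes $v_{\mathfrak{p}_2}(\chi(f)) = c$. Plugging back, $\eta$ is a $2^r$-th power in $K$ if and only if $e + t_S + c \geq 2r$. The case $r=1$ is the first assertion. For the second, apply \eqref{eq:bounde} to the quadratic extension $K/K^+$: under (A1) all $m(d-1)=2(d-1)$ real places of $K^+$ not above $v$ become complex in $K$, so $R = 2(d-1)$ and $e \geq R-2 = 2d-4$. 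Therefore $e + t_S + c \geq 2(d-2)$, giving $r = d-2$ for $d \geq 3$.

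The main obstacle I anticipate is the middle step, namely identifying the Fitting-theoretic invariant $v_{\mathfrak{p}_2}(\chi(f))$ with the purely integer invariant $c$. This rests on the fact that $\mathfrak{p}_2$ is totally ramified over $2$ in $\mathbb{Z}[i]$ with residue field $\mathbb{F}_2$, so that the $\mathfrak{p}_2$-local contribution to the ideal norm is exactly the $2$-part of $|\Cl_K^-|$. Everything else is bookkeeping once the explicit description of $\bar\eta$ from Theorem~\ref{th:quartic} and the bound~\eqref{eq:bounde} are in hand.
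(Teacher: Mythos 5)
Your proof is correct and follows essentially the same route as the paper's: both reduce the question to the $\mathfrak{p}_2$-valuation of $(1+i)^{e+t_S}\chi(f)$ in $\mathbb{Z}[i]\simeq\mathbb{Z}[G]^-$ using the explicit formula for $\bar\eta$ from Theorem~\ref{th:quartic}, identify $v_{\mathfrak{p}_2}(\chi(f))=c$ from the Fitting-ideal index, and finish with the bound $e\geq 2d-4$ from~\eqref{eq:bounde}. The only cosmetic difference is that the paper dispatches the forward implication by citing Corollary~\ref{cor:whensquare} while you obtain both directions simultaneously from the valuation inequality, which is if anything slightly cleaner.
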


\begin{proof}
We prove the equivalence. The inequality is satisfied when the Stark unit $\varepsilon$ is a square by Corollary~\ref{cor:whensquare}. Now assume that the inequality is satisfied. By the unicity statement of the theorem, we have $\bar\varepsilon = \bar\eta$ (replacing $\eta$ by one of its conjugate if necessary). From the proof of the theorem, we see that $\bar\eta$ belongs to $2^r \cdot U_K^-$ if and only if $(i+1)^{e+t_S} \chi(f) \in 2^r \mathbb{Z}[i]$. Taking valuation at $\mathfrak{p}_2$, the only prime ideal above $2$, we see that it is equivalent to $e+t_S+c \geq 2r$. This proves the first assertion. Now, to prove the second assertion, we see that $e \geq 2d-4$ by \eqref{eq:bounde}. Therefore $\bar{\eta}$ lies in $2^{d-2} \cdot U_K^-$. This proves the result. 
\end{proof}
 
When $d = 2$ it is possible to find examples for which the Stark unit, if it exists, is a square and examples for which it is not a square. For example, let $k := \mathbb{Q}(\sqrt{5})$ and let $v_1, v_2$ denote the two infinite places of $k$ with $v_1(\sqrt{5}) < 0$ and $v_2(\sqrt{5}) > 0$. Let $K$ be the ray class field modulo $\mathfrak{p}_{29} v_1$ where $\mathfrak{p}_{29} := (11/2 - \sqrt{5}/2)$ is one of the two prime ideals above $29$. Then $K/k$ is a cyclic quartic extension that satisfies (A1), (A2) and (A3) with $S := S(K/k)$ and one can prove that, if it exists, the Stark unit is not a square. Now, on the other hand, let $K$ be the ray class field modulo $\sqrt{5}\mathfrak{p}_{41} v_1$ where $\mathfrak{p}_{41} := (13/2 - \sqrt{5}/2)$ is one of the two prime ideals above $41$. Then $K/k$ is a cyclic quartic extension that satisfies (A1), (A2) and (A3) with $S := S(K/k)$, but one can prove that, in this case, the Stark unit, if it exists, is a square.
 
\section{Cyclic sextic extensions}

In this final section we study the case when $K/k$ is a cyclic sextic extension. We will need some additional assumptions to be able to prove that there exists solutions to (P1) and (P2). 

\begin{theorem}\label{th:sextic}
Let $K/k$ be a cyclic sextic extension such that \textup{(A1)}, \textup{(A2)} and \textup{(A3)} are satisfied with $S := S(K/k)$  . Assume also that $3$ does not divide the order of $\Cl_K$ and that no prime ideal above $3$ is wildly ramified in $K/k$. Let $F$ be the quadratic extension of $k$ contained in $K$. Then there exists $\bar\eta \in U_K^-$ satisfying \textup{(P1)} and \textup{(P2)} and such that $\mathcal{N}_{K/F}(\eta)$ is the Stark unit for the extension $F/k$ and the set of places $S$. Furthermore, $\bar\eta$ is unique up to multiplication by an element of $\Gal(K/F)$, satisfies for all $\chi \in \hat{G}$
\begin{equation*}
\left|L'_{K/k,S}(0, \chi)\right| = \frac{1}{2} \left|\sum_{g \in G} \chi(g) \log |\eta^g|\right|,
\end{equation*}
and the extension $K(\sqrt{\eta})/k$ is abelian.
\end{theorem}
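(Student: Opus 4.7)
The plan is to adapt the strategy of Theorem~\ref{th:quartic}, with extra work needed at the prime~$3$. Writing $G=\langle\gamma\rangle$ with $\tau=\gamma^3$, the ring $\mathbb{Z}[G]^-$ is isomorphic to $\mathbb{Z}[y]/(y^3+1)$, and the factorization $y^3+1=(y+1)(y^2-y+1)$ embeds it into the maximal order $\mathcal{O}_G^-\simeq\mathbb{Z}\oplus\mathbb{Z}[\zeta_6]$ with cokernel of order~$3$. The two summands correspond to the two elements of $X_{\rm odd}$: the order-$2$ character $\xi_2$ and the sum $\xi_1$ of the two odd characters of order~$6$. Away from~$3$ the two orders coincide and the Fitting-ideal approach of the quartic proof applies componentwise; the hypotheses $3\nmid|\Cl_K|$ and no wild ramification at~$3$ are there precisely to neutralize the $3$-adic obstructions.

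I would first produce a norm-lift of the quadratic Stark unit. Let $F\subset K$ be the quadratic subfield, $H=\Gal(K/F)=\langle\gamma^2\rangle$ of order~$3$, and $\bar\varepsilon_F\in\bar U_F^-\subseteq\bar U_K^-$ the Stark unit for $F/k$ supplied by Theorem~\ref{th:quadratic}, whose hypotheses for $F/k$ are inherited from those on $K/k$: (A1) holds because any real place $w'\neq v$ of $k$ must already complexify in $F$ (else it would have a real place above it in the odd-degree extension $K/F$, contradicting that $w'$ is complex in $K$); (A2) holds since $F$ is therefore not totally real, forcing $F^+=k$; (A3) holds because $DH=G$ for the decomposition group $D\ni\tau$ of any $\mathfrak p\in S$. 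The next step is to find $\bar\eta_0\in U_K^-$ with $\mathcal{N}_{K/F}(\bar\eta_0)=\bar\varepsilon_F$; here $\mathcal{N}_{K/F}=1+\gamma^2+\gamma^4$ acts as~$3$ on the $\xi_2$-part and as~$0$ on the $\xi_1$-part. The obstruction sits in $\hat H^0(H,\bar U_K^-)=\bar U_F^-/\mathcal{N}_{K/F}(\bar U_K^-)$, which is $3$-torsion. Applying the Tate hexagon~\eqref{eq:hexa} in the style of Lemma~\ref{lem:h11} to the minus parts, using $3\nmid|\Cl_K|$ to kill $\hat H^\bullet(H,\Cl_K^-)$, and using the tame-ramification hypothesis to make local norms surjective at primes above~$3$ (Hasse's norm theorem), I would conclude that this obstruction vanishes.

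Having fixed such a $\bar\eta_0$, I would then adjust it by an element of $\ker(\mathcal{N}_{K/F})$, which after inverting~$3$ is exactly the $\xi_1$-component of $U_K^-$, a rank-$1$ module over the PID $\mathbb{Z}[\zeta_6][1/3]$. Letting $f_1$ generate the Fitting ideal of $e_{\xi_1}\Cl_K^-$ over $\mathbb{Z}[\zeta_6]$ and $\bar\theta_1$ generate the $\xi_1$-component of $U_K^-\otimes\mathbb{Z}[1/3]$, I set $\bar\eta:=\bar\eta_0+e_{\xi_1}\cdot c_1\cdot\bar\theta_1$ with $c_1\in\mathbb{Z}[\zeta_6]$ of the form $f_1\cdot 2^k$ chosen so that the resulting $\xi_1$-index combines with the $\xi_2$-index coming from $\bar\varepsilon_F$ (computed via Theorem~\ref{th:quadratic} applied to $F/k$) to give exactly $2^{e+t_S}|\Cl_K^-|$; the hypothesis $3\nmid|\Cl_K^-|$ ensures the factorization $|\Cl_K^-|=|e_{\xi_1}\Cl_K^-|\cdot|e_{\xi_2}\Cl_K^-|$ and lets one solve for $c_1$. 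Lemmas~\ref{lem:p1} and~\ref{lem:p2} then confirm (P1) and (P2) character by character.

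The remaining statements parallel the quartic case. The $L$-value identity follows from Proposition~\ref{prop:prodform} by pairing the two complex-conjugate characters in $C_{\xi_1}$ and handling $\xi_2$ separately. Abelianness of $K(\sqrt\eta)/k$ reduces to $(\gamma-1)\bar\eta\in 2U_K^-$: on the $\xi_2$-component $\gamma$ acts as $-1$ so $(\gamma-1)=-2$, and on the $\xi_1$-component it requires the $2$-adic valuation of $c_1$ to be sufficiently positive, which can be arranged by means of the bound~\eqref{eq:bounde} for $d$ large enough and a Lemma~\ref{lem:h11}-style argument otherwise. Uniqueness up to $\Gal(K/F)$ follows from Corollary~\ref{cor:unicity} together with the fact that the trivial units of $\mathbb{Z}[G]^-$ are $\{\pm\gamma^k:k=0,1,2\}$ and that the sign normalization $\mathcal{N}_{K/F}(\bar\eta)=+\bar\varepsilon_F$ cuts these six choices down to the three elements of $H$. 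The main obstacle will be the norm-lifting step, where the $3$-adic hypotheses of the theorem enter essentially via cohomology at~$3$; the remaining details are bookkeeping closely analogous to the quartic proof.
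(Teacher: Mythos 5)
Your overall plan is sound and the cohomological obstruction you identify is exactly the one the paper computes: both arguments come down to showing $\hat H^0(H, U_K^-) = 1$ for $H = \Gal(K/F)$, after which lifting $\bar\varepsilon_F$ along $\mathcal{N}_{K/F}$ and proving freeness of $U_K^-$ over $\mathbb{Z}[H]$ are two faces of the same coin. But your treatment is thin precisely where the real work lies. First, a ``Lemma~\ref{lem:h11}-style'' hexagon-chase does not transfer to the present situation — that lemma is about an index-$2$ extension and one specific short exact sequence. The paper's computation of $\hat H^0(H,U_K^-)$ requires passing to elements $K_{\mathcal S}^\times$ supported only at totally split primes, invoking Chebotarev to replace $\Cl_K^{1-\tau}$ by its image from split primes, a $2$-torsion argument to trade $P_K\cap I_{K,\mathcal S}^{1-\tau}$ for $P_{K,\mathcal S}^{1-\tau}$, the fact that $\hat H^1(\mathbb{Z}[H])=1$ for the free induced module $I_{K,\mathcal S}^{1-\tau}$, and finally a place-by-place Hasse-norm analysis (split, inert, ramified with the tame hypothesis entering only at ramified primes above $3$); none of this is suggested by ``applying the Tate hexagon in the style of Lemma~\ref{lem:h11}.'' Second, the $3$-adic construction: $\mathbb{Z}[G]^-\cong\mathbb{Z}[H]$ is \emph{not} a product, so $\bar\eta$ cannot be assembled componentwise. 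Your candidate $\bar\eta_0 + e_{\xi_1}c_1\bar\theta_1$ a priori lives in $U_K^-\otimes\mathbb{Z}[1/3]$, not in $U_K^-$; and even after repairing integrality, (P1) forces $(U_K^-\otimes\mathbb{Z}_3 : \mathbb{Z}_3[H]\cdot\bar\eta) = 1$, which is a nontrivial condition in the local (non-semisimple) ring $\mathbb{Z}_3[H]$ and does not follow from matching the two components. The paper's Claim that an ideal $\mathcal A\subset\mathbb{Z}[H]$ of finite index contains a $g$ with $\mathcal{O}/\mathcal{A}\mathcal{O}\cong\mathbb{Z}[H]/g\mathbb{Z}[H]$, and its explicit multiplier $\kappa_{n,m}\in\mathbb{Z}[H]$, are exactly the devices that control this; ``solving for $c_1$'' does not.

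Several further points you pass over are substantive. For $\bar\eta_0$ to even generate $U_K^-\otimes\mathbb{Z}_3$ over $\mathbb{Z}_3[H]$ one needs $3\nmid|\Cl_F^-|$, which the paper deduces from $3\nmid|\Cl_K|$ by a separate argument (Claim~\ref{claim:nclk}); it is not automatic. Solving for $c_1\in\mathbb{Z}[\zeta_6]$ of norm a prescribed power of $2$ requires that power to be \emph{even} (since $2$ is inert in $\mathbb{Z}[\zeta_6]$), i.e.\ that $e-e'$ is even; the paper proves this. The abelianness of $K(\sqrt\eta)/k$ is not ``arranged for $d$ large enough and handled by Lemma~\ref{lem:h11} otherwise'': the paper proves $|\hat H^0(T, U_K/U_F)| = 2^{e-e'}$ and then computes $Q(T,U_K/U_F)=2^{2d-2}$, yielding $e-e'\geq 2d-2\geq 2$ for \emph{all} $d\geq 2$ — a clean unconditional bound with no case split. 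Finally, for uniqueness you should not stop at trivial units: Corollary~\ref{cor:unicity} gives a $2$-unit $u$, and one must use $e_0u = e_0$ (from the fixed $F$-norm) plus $\mathrm{Norm}(u)=1$ to conclude that $u_1\in\mathbb{Z}[\zeta_6]^\times$, whence $u\in H$. Your hypothesis-inheritance check for $F/k$ (A1)--(A3) is correct and a useful observation that the paper leaves implicit.
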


\begin{proof}
 Let $\gamma$ be a generator of the Galois group $G$, thus $\tau = \gamma^3$. Let $\chi$ be the character that sends $\gamma$ to $-\omega$ where $\omega$ is a fixed primitive third root of unity. It is a generator of the group of characters of $G$. We have $X_{\rm odd} = \{\xi_2, \xi_6\}$ where $\xi_2 := \chi^3$ and $\xi_6 := \chi + \chi^5$. The corresponding idempotents are
\begin{equation*}
e_{\xi_2} = \frac{1}{6}(1-\gamma^3)(1+\gamma^2+\gamma^4)\quad  \text{and}\quad
e_{\xi_6} = \frac{1}{6}(1-\gamma^3)(2-\gamma^2-\gamma^4).
\end{equation*}
We have the isomorphism
\begin{equation}\label{eq:q6iso}
\mathbb{Q}[G]^- = e_{\xi_2} \mathbb{Q}[G] + e_{\xi_6} \mathbb{Q}[G] \cong \mathbb{Q} \oplus \mathbb{Q}(\omega).
\end{equation}
Let $\sigma := \gamma^2$ and let $H$ be the subgroup of order $3$ generated by $\sigma$. Then we have $G = \langle \tau \rangle \times H$ and the projection map on $H$ extends to an isomorphism between $\mathbb{Q}[G]^-$ and $\mathbb{Q}[H]$, that restricts to an isomorphism between $\mathbb{Z}[G]^-$ and $\mathbb{Z}[H]$. From now on, we will identify $\mathbb{Q}[G]^-$ and $\mathbb{Q}[H]$. Note that, with that identification, both act in the same way on $U_K^-$, $U_K^- \otimes \mathbb{Q}$, $\Cl_K^-$, etc. Let $e_0$ and $e_1$ be the image by the projection map of $e_{\xi_2}$ and $e_{\xi_6}$. Then $e_0 = \frac{1}{3}(1+\sigma+\sigma^2)$ is the idempotent of the trivial character of $H$ and $e_1 = \frac{1}{3}(2-\sigma-\sigma^2)$ is the sum of the idempotents of the two non trivial characters of $H$. The main difference between this case and the quartic case is the fact that the isomorphism between $\mathbb{Q}[H]$ and $\mathbb{Q} \oplus \mathbb{Q}(\omega)$ does not restrict to an isomorphism between $\mathbb{Z}[G]^-$ and $\mathbb{Z} \oplus \mathbb{Z}[\omega]$. In particular, $\mathbb{Z}[G]^-$ is not a principal ring. Because of that the proof is somewhat more intricate than in the quartic case. We will therefore proceed by proving a series of different claims. First, we define 
\begin{equation}\label{eq:oiso}
\mathcal{O} := e_0 \mathbb{Z}[H] + e_1 \mathbb{Z}[H] \simeq \mathbb{Z} \oplus \mathbb{Z}[\omega]. 
\end{equation}
Note that, by the above identification, we have $\mathcal{O}_G^- \cong \mathcal{O}$. 

\begin{claim}\label{claim:oprincipal}
The ring $\mathcal{O}$ is principal and contains $\mathbb{Z}[H]$ with index 3. 
\end{claim}

Let $\mathcal{I}$ be an ideal of $\mathcal{O}$. Then $e_0\mathcal{I}$ is an ideal of $e_0\mathbb{Z} \simeq \mathbb{Z}$. Thus there exists $a \in \mathbb{Z}$ such that  $e_0\mathcal{I} = a e_0\mathbb{Z}[H]$. In the same way, $e_1\mathcal{I}$ is an ideal of $e_1\mathbb{Z} \simeq \mathbb{Z}[\omega]$. Since $\mathbb{Z}[\omega]$ is a principal ring, there exists $b, c \in \mathbb{Z}$ such that  $e_1\mathcal{I} = e_1(b+c\sigma)\mathbb{Z}[H]$. One verify readily that $e_0 a + e_1 (b + c\sigma)$ is a generator of $\mathcal{I}$. To conclude the proof of Claim~\ref{claim:oprincipal}, we note that $\mathcal{O}/\mathbb{Z}[H] = \langle e_0+\mathbb{Z}[H]\rangle = \langle e_1+\mathbb{Z}[H]\rangle$ clearly has order $3$. 

\begin{claim}\label{claim:mainid} 
Let $\mathcal{A}$ be an ideal of $\mathbb{Z}[H]$ of finite index. Then there exists $g \in \mathcal{A}$ such that
\begin{equation}\label{eq:index}
\mathcal{O}/\mathcal{AO} \simeq \mathbb{Z}[H]/g\mathbb{Z}[H].
\end{equation}
Furthermore, $\mathcal{A} = g\mathbb{Z}[H]$ if $\mathcal{A}$ is a principal ideal. Otherwise $(\mathcal{A}:g\mathbb{Z}[H]) = 3$. 
\end{claim}

We prove this claim by considering the two cases: $\mathcal{AO} \not= \mathcal{A}$ and $\mathcal{AO} = \mathcal{A}$.

\begin{subclaim}\label{claim:aprincipal}
Assume that $\mathcal{AO} \not= \mathcal{A}$. Then $\mathcal{A}$ is a principal ideal.
\end{subclaim}

Let $g' = e_0a + e_1(b+c\sigma)$ be a generator of the principal ideal $\mathcal{AO}$ of $\mathcal{O}$. If $e_1(b+c\sigma) \in \mathcal{A}$, then $e_1\mathcal{A}  = e_1(b+c\sigma)\mathbb{Z}[H] \subset \mathcal{A}$ and it follows that $\mathcal{A} = \mathcal{AO}$, a contradiction. Therefore $\mathcal{AO}/\mathcal{A} = \langle e_1(b+c\sigma) + \mathcal{A} \rangle$ has order $3$. Thus one of the three elements: $e_0a + e_1(b+c\sigma)$, $e_0a - e_1(b+c\sigma)$ or $e_0a$  belongs to $\mathcal{A}$. It cannot be $e_0a$ since that would imply, as above, that $\mathcal{A} = \mathcal{AO}$. Denote by $g$ the one element between $e_0a \pm e_1(b+c\sigma)$ that lies in $\mathcal{A}$. Clearly we still have $g\mathcal{O} = \mathcal{AO}$. Now, $g$ is not a zero divisor since $\mathcal{A}$ has finite index in $\mathbb{Z}[H]$, so we have $(g\mathcal{O}:g\mathbb{Z}[H]) = (\mathcal{O}:\mathbb{Z}[H]) = 3$. Therefore we get
\begin{equation*}
(\mathcal{A}:g\mathbb{Z}[H]) = \frac{(g\mathcal{O}:g\mathbb{Z}[H])}{(\mathcal{AO}:\mathcal{A})} = 1
\end{equation*}
and $\mathcal{A}=g\mathbb{Z}[H]$. Equation \eqref{eq:index} follows in that case from the equality
\begin{equation}\label{eq:indexAO}
(\mathcal{O}:\mathcal{AO}) (\mathcal{AO}:\mathcal{A}) = (\mathcal{O}: \mathbb{Z}[H]) (\mathbb{Z}[H]: \mathcal{A})
\end{equation}
and the fact that $(\mathcal{AO}:\mathcal{A}) = (\mathcal{O}: \mathbb{Z}[H])$ by the above. 

\begin{subclaim}\label{claim:anonprincipal}
Assume that $\mathcal{AO} = \mathcal{A}$. Then $\mathcal{A}$ is not a principal ideal, but there exists $g \in \mathcal{A}$ such that $(\mathcal{A}: g\mathbb{Z}[H]) = 3$.
\end{subclaim}

Let $g$ be a generator of the principal ideal $\mathcal{AO}$ of $\mathcal{O}$. Since $\mathcal{AO} =\mathcal{A}$, $g$ lies in $\mathcal{A}$ and we compute as above
\begin{equation*}
(\mathcal{A}:g\mathbb{Z}[H]) =  (\mathcal{AO}:g\mathcal{O})(g\mathcal{O}:g\mathbb{Z}[H]) = 3. 
\end{equation*}

Since $(\mathcal{O}: \mathbb{Z}[H])(\mathbb{Z}[H]:\mathcal{A}) = 3(\mathbb{Z}[H]:\mathcal{A}) = (\mathcal{A}:g\mathbb{Z}[H])(\mathbb{Z}[H]:\mathcal{A}) = (\mathbb{Z}[H]:g\mathbb{Z}[H])$ and $(\mathcal{AO}:\mathcal{A}) = 1$, Equation~\eqref{eq:index} follows from \eqref{eq:indexAO}. It remains to prove that $\mathcal{A}$ cannot be principal in that case. In order to prove this, we need another result. Let $x \in \mathcal{O}$. By the isomorphism in \eqref{eq:oiso}, it corresponds to a pair $(x_0, x_1)$ in $\mathbb{Z} \oplus \mathbb{Z}[\omega]$. We define the norm of $x$ as the following quantity
\begin{equation*}
\text{Norm}(x) := |x_0| \, N_{\mathbb{Q}(\omega)/\mathbb{Q}}(x_1).
\end{equation*}
Note that we recover the usual definition of the norm of $\mathbb{Q}[H]$ as a $\mathbb{Q}$-algebra. The proof of the following claim is straightforward and is left to the reader.

\begin{claim}\label{claim:norm}
Let $x \in \mathcal{O}$ with $\text{Norm}(x) \not= 0$. Then $(\mathcal{O}:x\mathcal{O}) = \text{Norm}(x)$. If furthermore $x \in \mathbb{Z}[H]$ then $(\mathbb{Z}[H]:x\mathbb{Z}[H]) = \text{Norm}(x)$.
\end{claim}

\noindent We now finish the proof of Claim~\ref{claim:anonprincipal}. Assume that $\mathcal{A}$ is principal, say $\mathcal{A} = h\mathbb{Z}[H]$. Then there exists $z \in \mathbb{Z}[H]$ such that $g = hz$ and we have $(\mathcal{O}:z\mathcal{O}) = 3$. Thanks to the above claim, we can explicitly compute all the elements $z \in \mathcal{O}$ such that $(\mathcal{O}:z\mathcal{O}) = 3$. There are the elements  $z = e_0 a + e_1 (b + c\sigma)$ with $a = \pm 1$ and $b + c\sigma \in \{\pm(1+2\sigma), \pm(2+\sigma), \pm (1-\sigma)\}$, or $a = \pm 3$ and $b + c\sigma \in \{\pm 1, \pm \sigma, \pm (1+\sigma)\}$. One can compute all possibilities and check that none of those belong to $\mathbb{Z}[H]$. This gives a contradiction and concludes the proof of Claim~\ref{claim:anonprincipal} and of Claim~\ref{claim:mainid}. 

\smallskip

We now turn to the $\mathbb{Z}[H]$-structure of $U_K^-$. The principal result is the following claim that we will prove in several steps.

\begin{claim}\label{claim:ukprincipal}
There exists $\bar\theta \in U_K^-$ such that $U_K^- = \mathbb{Z}[H] \cdot \bar\theta$.  
\end{claim}

Let $\bar\theta' \in U_K^-$ be such that $U_K^- \otimes \mathbb{Q} = \mathbb{Q}[H] \cdot \bar\theta'$. Note that the existence of $\bar\theta'$ follows from Proposition~\ref{prop:iso}. We define 
\begin{equation*}
\Lambda := \left\{ x \in \mathbb{Q}[H] : x \cdot \bar\theta' \in U_K^- \right\}.  
\end{equation*}
It is a fractional ideal of $\mathbb{Z}[H]$. The above claim is satisfied if and only if it is a principal ideal. Assume that this is not the case. Then, by the above, we have\footnote{Strictly speaking, the claims above are only for integral ideals of $\mathbb{Z}[H]$ but they admit obvious and direct generalizations to fractional ideals.}  $\Lambda\mathcal{O} = \Lambda$. Recall that $F$ denotes the subfield of $K$ fixed by $H$. It is a quadratic extension of $k$ and $\Gal(F/k) = \langle \tau\rangle$. We define $U_F^-$ as the kernel of norm map from $U_F/\langle \pm 1\rangle$ to $U_k/\langle \pm 1\rangle$. We have also $U_F^- = U_K^- \cap (F^\times/\langle \pm 1\rangle)$. Let $N_H := 1 + \sigma + \sigma^2$. It is the group ring element corresponding to the norm of the extensions $K/F$ and $K^+/k$. 

\begin{subclaim}\label{claim:nuk=3uf}
$\Lambda\mathcal{O} = \Lambda$ if and only if $N_H \cdot U_K^-  = 3 \cdot U_F^-$. If $\Lambda\mathcal{O} \not= \Lambda$, then $N_H \cdot U_K^- = U_F^-$.
\end{subclaim}

\noindent We have $\Lambda\mathcal{O} = \Lambda$ if and only if $e_0\Lambda \subset \Lambda$, that is $N_H \cdot U_K^- \subset 3 \cdot U_K^-$. Assume that it is the case. Let $\bar\delta \in U_K^-$ and set $\kappa := N_{K/F}(\delta) \in U_F$. Then the polynomial $X^3 - \kappa$ has a root, say $\nu$, in $U_K$. If $\nu$ does not belong to $F$ then all the roots of $X^3 - \kappa$ belongs to $K$ since $K/F$ is a Galois extension. It follows that $K$ contains the third roots of unity, a contradiction. Therefore $\bar\nu \in U_F^-$ and $N_H \cdot U_K^- \subset 3 \cdot U_F^-$. The other inclusion is trivial and the first assertion of the claim is proved. If $\Lambda\mathcal{O} \not= \Lambda$ then $3 \cdot U_F^- \varsubsetneq N_H \cdot U_K^- \subset U_F^-$. Since $U_F^-$ is a $\mathbb{Z}$-module of rank $1$, it follows that $N_H \cdot U_K^- = U_F^-$. The claim is proved. 

\smallskip

Let $\mathcal{S}$ be the set of prime ideals of $K$ that are totally split in $K/k$. Denote by $I_{K,\mathcal{S}}$ the subgroup of $I_K$, the group of ideals of $K$, generated by the prime ideals in $\mathcal{S}$. Then, by Chebotarev's theorem, the following short sequence is exact
\begin{equation*}\xymatrix{
1 \ar[r] & P_K \cap I_{K,\mathcal{S}}^{1-\tau} \ar[r] & I_{K,\mathcal{S}}^{1-\tau} \ar[r] & \Cl_K^{1-\tau} \ar[r] & 1
}\end{equation*}
where $P_K$ is the group of principal ideals of $K$. We take the Tate cohomology of this sequence for the action of $H$. Since $3$ does not divide the order of $\Cl_K$, it does not divide the order of $\Cl_K^{1-\tau}$ and $\hat{H}^0(\Cl_K^{1-\tau}) = \hat{H}^1(\Cl_K^{1-\tau}) = 1$. Note that here and in what follows, to simplify the presentation, we drop the group $H$ in the notation of the cohomology groups and write $\hat{H}^i(M)$ instead of $\hat{H}^i(H, M)$ for $M$ a $\mathbb{Z}[H]$-module. It follows from the exact hexagon \eqref{eq:hexa} for the above exact sequence that $\hat{H}^i(P_K \cap I_{K,\mathcal{S}}^{1-\tau}) \simeq \hat{H}^i(I_{K,\mathcal{S}}^{1-\tau})$ for $i = 0, 1$. Let $\mathfrak{A} \in P_K \cap I_{K,\mathcal{S}}^{1-\tau}$. There exist $\alpha \in K_\mathcal{S}^\times$, the subgroup of elements of $K^\times$ supported only by prime ideals in $\mathcal{S}$, and $\mathfrak{B} \in I_{K,\mathcal{S}}$ such that 
\begin{equation*}
\mathfrak{A} = (\alpha) = \mathfrak{B}^{1-\tau}. 
\end{equation*}
We apply $1-\tau$ to this equation
\begin{equation*}
\mathfrak{A}^{1-\tau} = (\alpha)^{1-\tau} = \mathfrak{B}^{(1-\tau)^2} = (\mathfrak{B}^{1-\tau})^2 = \mathfrak{A}^2. 
\end{equation*}
Therefore we have $(P_K \cap I_{K,\mathcal{S}}^{1-\tau})^2 \subset P_{K,\mathcal{S}}^{1-\tau}$ where $P_{K,\mathcal{S}}$ is the subgroup of principal ideals generated by the elements of $K^\times_\mathcal{S}$. It follows that the quotient $(P_K \cap I_{K,\mathcal{S}}^{1-\tau})/P_{K,\mathcal{S}}^{1-\tau}$ is killed by $2$ and therefore $\hat{H}^i(P_K \cap I_{K,\mathcal{S}}^{1-\tau}) = \hat{H}^i(P_{K,\mathcal{S}}^{1-\tau})$ for $i = 0$ or $1$. We have proved the following claim.

\begin{subclaim}\label{claim:isohi}
$\hat{H}^0(P_{K,\mathcal{S}}^{1-\tau}) \simeq \hat{H}^0(I_{K,\mathcal{S}}^{1-\tau})$ and $\hat{H}^1(P_{K,\mathcal{S}}^{1-\tau}) \simeq \hat{H}^1(I_{K,\mathcal{S}}^{1-\tau})$.
\end{subclaim}

\noindent Let $u \in U_K \cap (K_\mathcal{S}^\times)^{1-\tau}$. There exists $\alpha \in K_\mathcal{S}^\times$ such that $u = \alpha^{1-\tau}$. Therefore we get 
\begin{equation*}
u^{1-\tau} = \alpha^{(1-\tau)^2} = (\alpha^{1-\tau})^2 = u^2. 
\end{equation*}
Reasoning as above, this implies that $\hat{H}^i(U_K \cap (K_\mathcal{S}^\times)^{1-\tau}) = \hat{H}^i(U_K^{1-\tau}) = \hat{H}^i(U_K^-)$ for $i = 0, 1$. We now consider the short exact sequence
\begin{equation*}\xymatrix{
1 \ar[r] & U_K \cap (K_\mathcal{S}^\times)^{1-\tau} \ar[r] & (K_\mathcal{S}^\times)^{1-\tau} \ar[r] & P_{K,\mathcal{S}}^{1-\tau} \ar[r] & 1.
}\end{equation*}
Taking the Tate cohomology and using the above equalities, we extract the following exact sequence from the  exact hexagon~\eqref{eq:hexa} corresponding to this exact sequence
\begin{equation}\label{eq:exact}\xymatrix{
\cdots \ar[r] & \hat{H}^1(P_{K,\mathcal{S}}^{1-\tau}) \ar[r] & \hat{H}^0(U^-_K) \ar[r] & \hat{H}^0((K_\mathcal{S}^\times)^{1-\tau}) \ar[r] & \cdots \\
}\end{equation}

\noindent The next claim is just a reformulation of the first part of Claim~\ref{claim:nuk=3uf}.\\[-5pt]

\begin{subclaim}~\label{claim:h1uk=3}
$\Lambda\mathcal{O} = \Lambda$ if and only if $\hat{H}^0(U_K^-) \simeq \mathbb{Z}/3\mathbb{Z}$. 
\end{subclaim}

\noindent Assume the two followings claims for the moment. 

\begin{subclaim}\label{claim:H0PK=1}
$\hat{H}^1(P_{K,\mathcal{S}}^{1-\tau})$ is trivial.
\end{subclaim}

\begin{subclaim}\label{claim:H0K=1}
$\hat{H}^0((K_\mathcal{S}^\times)^{1-\tau})$ is trivial.
\end{subclaim}

\noindent By \eqref{eq:exact} we get that $\hat{H}^0(U_K^-) = 1$. Thus $\Lambda\mathcal{O} \not= \mathcal{O}$ by Claim~\ref{claim:h1uk=3} and therefore $\Lambda$ is principal by Claim~\ref{claim:aprincipal}, and Claim~\ref{claim:ukprincipal} follows. It remains to prove Claims~\ref{claim:H0PK=1} and \ref{claim:H0K=1}. We start with 
the proof of Claim~\ref{claim:H0PK=1}. By Claim~\ref{claim:isohi}, this is equivalent to prove that $\hat{H}^1(I_{K,\mathcal{S}}^{1-\tau})$ is trivial. We have as $\mathbb{Z}[H]$-modules
\begin{equation*}
I_{K,\mathcal{S}}^{1-\tau} = \prod_{\mathfrak{p}_0 \in \mathcal{S}_0}{\!\!\!}^{'} \Big(\prod_{\mathfrak{P} \mid \mathfrak{p}_0} \mathfrak{P}^{\mathbb{Z}}\Big)^{1-\tau} \simeq  \prod_{\mathfrak{p}_0 \in \mathcal{S}_0}{\!\!\!}^{'} (1-\tau) \mathbb{Z}[G] 
\end{equation*}
where $\mathcal{S}_0$ is the set of prime ideals of $k$ that splits completely in $K/k$, $\mathfrak{P}$ runs through the prime ideals of $K$ dividing $\mathfrak{p}_0$ and the $'$ indicates that it is a restricted product, that is the exponent of $\mathfrak{P}$ is zero for all but finitely many prime ideals. The  isomorphism comes from fixing a prime ideal above $\mathfrak{p}_0$ and the fact that $\mathfrak{p}_0$ is totally split in $K/k$. Therefore we have 
\begin{equation*}
\hat{H}^1(I_{K,\mathcal{S}}^{1-\tau}) = \prod_{\mathfrak{p}_0 \in \mathcal{S}_0}{\!\!\!}^{'} \hat{H}^1((1-\tau)\mathbb{Z}[G]) \simeq \prod_{\mathfrak{p}_0 \in \mathcal{S}_0}{\!\!\!}^{'} \hat{H}^1(\mathbb{Z}[H]). 
\end{equation*}
It is well-known that $\hat{H}^1(\mathbb{Z}[H]) = 1$, thus Claim \ref{claim:H0PK=1} is proved.

To prove Claim~\ref{claim:H0K=1}, we prove that the norm from $(K_\mathcal{S}^\times)^{1-\tau}$ to $(F_\mathcal{S}^\times)^{1-\tau}$ is surjective. Let $\alpha^{1-\tau} \in (F_\mathcal{S}^\times)^{1-\tau}$. By the Hasse Norm Principle, $\alpha^{1-\tau}$ is a norm in $K/F$ if and only if it is a norm in $K_\mathfrak{P}/F_\mathfrak{p}$ for all prime ideals $\mathfrak{P}$ of $K$ where $\mathfrak{p}$ denotes the prime ideal of $F$ below $\mathfrak{P}$. If $\mathfrak{p}$ splits in $K/F$, then $\alpha^{1-\tau}$ is trivially a norm in $K_\mathfrak{P}/F_\mathfrak{p}$. Assume now that $\mathfrak{p}$ is inert. It follows from the theory of local fields, see \cite[\S XI.4]{lang:book}, that the norm of $K_\mathfrak{P}/F_\mathfrak{p}$ is surjective on the group of units of $F_\mathfrak{p}$. But  $\alpha^{1-\tau}$ is a unit at $\mathfrak{P}$ since $\mathfrak{P} \not\in \mathcal{S}$,  and therefore it is a norm also in this case. Finally we assume that $\mathfrak{P}$ is ramified in $K/F$. Let $p$ be the rational prime below $\mathfrak{P}$. By hypothesis, $p \not= 3$ since $3$ is not wildly ramified in $K/k$. Write $\mu_\mathfrak{P}$, $\mathbb{U}_\mathfrak{P}$, $\mu_\mathfrak{p}$ and $\mathbb{U}_\mathfrak{p}$ for the group of roots of unity of order prime to $p$ and the group of principal units of $K_\mathfrak{P}$ and $F_\mathfrak{p}$ respectively. We have $\mu_\mathfrak{P} = \mu_\mathfrak{p}$ and therefore $\mathcal{N}_{K_\mathfrak{P}/F_\mathfrak{p}}(\mu_\mathfrak{P}) = \mu_\mathfrak{p}^3$. On the other hand $\mathcal{N}_{K_\mathfrak{P}/F_\mathfrak{p}}(\mathbb{U}_\mathfrak{p}) = \mathbb{U}^3_\mathfrak{p} = \mathbb{U}_\mathfrak{p}$ and the norm is surjective on principal units. Since $\mathfrak{P} \not\in \mathcal{S}$, $v_\mathfrak{p}(\alpha) = 0$ and $\alpha = \zeta u$ with $\zeta \in \mu_\mathfrak{p}$ and $u \in \mathbb{U}_\mathfrak{p}$. It follows from the above discussion that $\alpha^{1-\tau}$ is a norm in $K_\mathfrak{P}/F_\mathfrak{p}$ if and only if $\zeta^{1-\tau} \in \mu_\mathfrak{p}^3$. Let $\mathfrak{p}_0$ be the prime ideal of $k$ below $\mathfrak{p}$. Assume first that $\mathfrak{p}_0$ is ramified in $F/k$. Then $\mu_\mathfrak{p} \subset k_{\mathfrak{p}_0}$ and $\zeta^{1-\tau} = 1$, thus $\alpha^{1-\tau}$ is a norm in $K_\mathfrak{P}/F_\mathfrak{p}$. Assume now that $\mathfrak{p}_0$ is inert\footnote{By (A3), it cannot be split in $F/k$.} in $F/k$. Denote by $f$ the residual degree of $\mathfrak{p}_0$. The group $\mu_{\mathfrak{p}_0}$ of roots of unity in $k$ of order prime to $p$ has order $p^f-1$. Let $\mathfrak{P}^+ := \mathfrak{P} \cap K^+$. The extension $K^+_{\mathfrak{P}^+}/k_{\mathfrak{p}_0}$ is a tamely ramified cyclic cubic extension. Therefore it is a Kummer extension by \cite[Prop.~II.5.12] {lang:book} and $k_{\mathfrak{p}_0}$ contains the third roots of unity, that is $3$ divides $p^f-1$. Since $\tau$ is the Frobenius element at $\mathfrak{p}_0$ of the extension $F/k_0$, we have $\zeta^{1-\tau} = \zeta^{1-p^f} = (\zeta^{(1-p^f)/3})^3 \in \mu_\mathfrak{p}^3$ and therefore $\alpha^{1-\tau}$ is a norm in $K_\mathfrak{P}/F_\mathfrak{p}$. We have proved that $\alpha^{1-\tau}$ is a norm everywhere locally. It follows by the Hasse Norm Principle that there exists $\beta \in K^\times$ such that $\mathcal{N}_{K/F}(\beta) = \alpha^{1-\tau}$. Let $\mathfrak{P}$ be a prime ideal of $K$ not in $\mathcal{S}$ and, as above, let $\mathfrak{p}$ be the prime ideal of $F$ below $\mathfrak{P}$. Assume first that $\mathfrak{P}$ is ramified or inert in $K/F$, then $v_\mathfrak{P}(\beta) = v_\mathfrak{p}(\alpha^{1-\tau})$ or $\frac{1}{3} v_\mathfrak{p}(\alpha^{1-\tau})$ respectively. In both cases we get  $v_\mathfrak{P}(\beta) = 0$ since $\alpha \in K^\times_{\mathcal{S}}$. If $\mathfrak{P}$ is split in $K/F$ then it must be inert or ramified in $K/K^+$ by (A3). It follows that $v_\mathfrak{P}(\beta^{1-\tau}) = 0$. Therefore $\delta := \beta^{1-\tau} \in K_\mathcal{S}^\times$. We now compute
\begin{equation*}
\mathcal{N}_{K/F}(\delta^{1-\tau}) = \mathcal{N}_{K/F}(\beta)^{(1-\tau)^2} = (\alpha^{1-\tau})^{2(1-\tau)} = (\alpha^{1-\tau})^4. 
\end{equation*}
Thus $\alpha^{1-\tau}$ is the norm of $(\delta/\alpha)^{1-\tau} \in (K_\mathcal{S}^\times)^{1-\tau}$. This concludes the proof of Claim~\ref{claim:H0K=1} and therefore also the proof of Claim~\ref{claim:ukprincipal}. The next claim follows from Claim~\ref{claim:nuk=3uf} and the fact that $\Lambda\mathcal{O} \not= \Lambda$. 

\begin{claim}\label{claim:usurj}
$N_H \cdot U_K^- = U_F^-$.  
\end{claim}

\smallskip

Let $\mathcal{F} := \Fitt_{\mathbb{Z}[H]}(\Cl_K^-)$ be the Fitting ideal of $\Cl_K^-$ as a $\mathbb{Z}[H]$-module. Apply Claim~\ref{claim:mainid} to the ideal $\mathcal{F}$ and call $f$ the element of $\mathcal{F}$ such that $\mathcal{O}/\mathcal{FO} \simeq \mathbb{Z}[H]/f\mathbb{Z}[H]$. Set $\bar\eta' := f \cdot \bar\theta$. We have 
\begin{equation}\label{eq:indetap}
(U_K^- : \mathbb{Z}[H] \cdot \bar\eta') = (\mathbb{Z}[H] : f\mathbb{Z}[H]) = (\mathcal{O}:\mathcal{FO}) = |\Cl_K^-|.
\end{equation}
The last equality follows from Lemma~\ref{lem:p1} and the fact that $\mathcal{FO}$ is the Fitting ideal of $\Cl_K^-$ as an $\mathcal{O}$-module. 

\begin{claim}\label{claim:kappa}
Let $n, m \geq 0$ be two integers. Then there exists $\kappa_{n,m} \in \mathbb{Z}[H]$, unique up to a trivial unit, such that
\begin{equation}\label{eq:propkappa}
\text{Norm}(\kappa_{n,m}) = 2^{n+2m} \quad\text{and}\quad e_0 \kappa_{n,m} = e_0 2^n.
\end{equation}
\end{claim}

\noindent We define
\begin{equation*}
\kappa_{n,m} := 2^n e_0 - (-1)^{n+m} 2^m e_1. 
\end{equation*}
It is clear from its construction that $\kappa_{n,m}$ satisfies~\eqref{eq:propkappa}. One can see also directly that $\kappa_{n,m} \in \mathbb{Z}[H]$ since $2 \equiv -1 \pmod{3}$. It remains to prove the unicity statement. Clearly $e_0 \kappa_{n,m}$ is fixed by construction. On the other hand $e_1\kappa_{n,m}$ is an element of norm $2^{2m}$ in $e_1\mathbb{Z}[H] \simeq \mathbb{Z}[\omega]$. Since $2$ is inert in $\mathbb{Z}[\omega]$, there exists only one element in $\mathbb{Z}[\omega]$ of norm $2^{2m}$ up to units. This concludes the proof of the claim.  

\smallskip

Let $e' \in \mathbb{N}$ be such that $2^{e'} = (\bar{U}_k : \mathcal{N}(\bar {U}_F))$. We now prove the following claim.

\begin{claim}
The integer $e-e'$ is non-negative and even.  
\end{claim}

\noindent We consider the natural map $\bar{U}_k \to \bar{U}_{K^+}/\mathcal{N}(\bar{U}_K)$ that comes from the inclusion $U_k \subset U_{K^+}$. Let $\bar{u} \in \bar{U}_k$ be in the kernel of this map. Thus there exists $\bar{x} \in \bar{U}_K$ such that $\bar{u} = \mathcal{N}(\bar{x})$. Set $\bar{y} := N_H \cdot \bar{x} - \bar{u} \in \bar{U}_F$. We have
\begin{equation*}
\mathcal{N}(\bar{y}) = N_H \cdot \mathcal{N}(\bar{x}) - \mathcal{N}(\bar{u}) = 3 \cdot \bar{u} - 2 \cdot \bar{u} = \bar{u}.  
\end{equation*}
Therefore the kernel of the above map is $\mathcal{N}(\bar{U}_F)$ and there is a well-defined injective group homomorphism from $\bar{U}_k/\mathcal{N}(\bar{U}_F)$ to $\bar{U}_{K^+}/\mathcal{N}(\bar{U}_K)$. This proves that\footnote{This inequality follows also from Claim~\ref{claim:h0iso} below.} $e \geq e'$. The cokernel of this map is
\begin{equation}\label{eq:defcok}
\frac{\bar{U}_{K^+}/\mathcal{N}(\bar{U}_K)}{\bar{U}_k/\mathcal{N}(\bar{U}_F)} \simeq \bar{U}_{K^+}/(\bar{U}_k + \mathcal{N}(\bar{U}_K)). 
\end{equation}
It is a finite $\mathbb{Z}[H]$-module of order $2^{e-e'}$. In particular, the idempotents $e_0$ and $e_1$ act on it. We have $e_0 \cdot \bar{U}_{K^+}/(\bar{U}_k + \mathcal{N}(\bar{U}_K)) = N_H \cdot \bar{U}_{K^+}/(\bar{U}_k + \mathcal{N}(\bar{U}_F)) = 1$. It follows that $\bar{U}_{K^+}/(\bar{U}_k + \mathcal{N}(\bar{U}_K)) = e_1 \cdot \bar{U}_{K^+}/(\bar{U}_k + \mathcal{N}(\bar{U}_K))$ is a $\mathbb{Z}[\omega]$-module. Since $2$ is inert in $\mathbb{Z}[\omega]$, the order of $\bar{U}_{K^+}/(\bar{U}_k + \mathcal{N}(\bar{U}_K))$ is an even power of $2$.  This concludes the proof of the claim. 

\smallskip

Let $\kappa := \kappa_{e'+t_S, (e-e')/2}$. We define
\begin{equation}\label{eq:defeta}
\bar\eta := \pm \kappa \cdot \bar\eta'.
\end{equation}

\noindent The choice of the sign will be done during the proof of the next claim. By Claims~\ref{claim:norm},  \ref{claim:kappa} and \eqref{eq:indetap}, it is direct to see that $\bar\eta$ satisfies (P1). It follows directly from its construction, the fact that $\kappa$ is a $2$-unit, and Lemma~\ref{lem:p2} that it is also a solution of~(P2).

\smallskip

The next step is to prove the following result. 

\begin{claim}\label{claim:chi3} Up to the right choice of sign in \eqref{eq:defeta}, we have
\begin{equation*}
\frac{1}{2} \sum_{g \in G} \chi^3(g) \log |\eta^g|_w = L'_{K/k,S}(0, \chi^3). 
\end{equation*}
\end{claim}

\noindent The $\mathbb{Z}[H]$-module $U_K^-/(\mathbb{Z}[H] \cdot \bar\eta)$ has order not divisible by $3$ since $\bar\eta$ satisfies (P1). Thus it is a $\mathcal{O}$-module and we can split it into two parts corresponding to the two idempotents $e_0$ and $e_1$. On one side, using Claim~\ref{claim:usurj}, we have
\begin{equation*}
e_0 \cdot \big(U_K^-/\mathbb{Z}[H] \cdot \bar\eta\big) = N_H \cdot \big(U_K^-/\mathbb{Z}[H] \cdot \bar\eta\big)\simeq  U_F^-/\mathbb{Z} \cdot \bar\eta_F
\end{equation*}
where $\bar\eta_F := N_H \cdot \bar\eta \in U^-_F$. On the other side, we compute
\begin{equation*}
e_1 \cdot \big(U_K^-/\mathbb{Z}[H] \cdot \bar\eta\big) \simeq  e_1 \big(\mathbb{Z}[H]/\kappa f \mathbb{Z}[H]\big) \simeq \mathbb{Z}[\omega]/2^{(e-e')/2} \mathcal{F}_1
\end{equation*}
where $\mathcal{F}_1$ is the Fitting ideal of $(\Cl_K^-)^{e_1}$ viewed as an $\mathbb{Z}[\omega]$-module. Indeed, we have by construction
\begin{equation*}
e_1 f \mathbb{Z}[H] = e_1 \mathcal{F}\mathcal{O} \simeq \Fitt_{\mathbb{Z}[\omega]}((\Cl_K^-)^{e_1}).
\end{equation*}
Since $\Cl_K^- = (\Cl_K^-)^{e_0}  \oplus (\Cl_K^-)^{e_1}$ and $(\Cl_K^-)^{e_0} \simeq \mathcal{N}_{K/F}(\Cl_K^-)$, we have\begin{equation*}
(\mathbb{Z}[\omega] : \mathcal{F}_1) = |(\Cl_K^-)^{e_1}| = \frac{|\Cl_K^-|}{|\mathcal{N}_{K/F}(\Cl_K^-)|}.
\end{equation*}

\begin{subclaim}\label{claim:nclk}
$\mathcal{N}_{K/F}(\Cl_K^-) = \Cl^-_F$. 
\end{subclaim}

Consider the composition of maps $\Cl_F^- \to \Cl_K^- \to \Cl_F^-$ where the first map is the map induced by the lifting of ideals from $F$ to $K$ and the second map is the norm $\mathcal{N}_{K/F}$. The map constructed in that way is the multiplication by $3$ and therefore, if the order of $\Cl_F^-$ is not divisible by $3$, it is a bijection and the claim is proved. Assume that $3$ divides the order of $\Cl_F^-$. Let $h_E$ denote the class number of a number field $E$. Thus $h_K^- := |\Cl_K^-| = h_K/h_{K^+}$ and $h_F^-  := |\Cl_F^-| = h_F/h_k$. If $K/F$ is ramified at some finite prime then $h_F$ divides $h_K$. As $h_F^-$ divides $h_F$, it follows that $3 \mid h_K$, a contradiction. Assume now that $K/F$ is unramified at finite primes. Therefore $3$ divides $h_F$ and $h_F/3$ divides $h_K$. In the same way, $K^+/k$ is unramified and therefore $3$ divides $h_k$. Since $3 \mid h_F^-$, this implies that $9$ divides $h_F$ and therefore $3$ divides $h_K$, a contradiction. It follows that $3$ does not divide $|\Cl_F^-|$ and the claim is proved. 

\smallskip 

Putting together the claim and the computation that precedes it, we find that 
\begin{equation}\label{eq:eqnf}
(U_F^- : \mathbb{Z} \cdot \bar\eta_F) = \frac{(U_K^- : \mathbb{Z} \cdot \bar\eta)}{2^{e-e'}} \frac{|\Cl_F^-|}{|\Cl_K^-|} = 2^{e'+t_S} |\Cl_F^-|.
\end{equation}

Let $\mathfrak{P}^+ \in S_{K^+}$ and denote by $\mathfrak{p}_0$ the prime ideal of $k$ below $\mathfrak{P}^+$. Then $\mathfrak{P}^+$ is inert in $K/K^+$ if and only if $\mathfrak{p}_0$ is inert in $F/k$. Furthermore,  if $\mathfrak{P}^+$ is inert in $K/K^+$, then it is ramified\footnote{Recall that $S = S(K/k)$.} in $K^+/k$ and it is the only prime ideal in $S_{K^+}$ above $\mathfrak{p}_0$. It follows that the number $t_S$ of prime ideals in $S_{K^+}$ that are inert in $K/K^+$ is equal to the number of prime ideals in $S$ that are inert in $F/k$. Therefore $\bar\eta_F$ satisfy the properties (P1) and (P2) for the extension $F/k$ and the set of primes $S$. As a consequence of Theorem~\ref{th:quadratic}, we see that either $\eta_F$ or $\eta_F^{-1}$ is the Stark unit for the extension $K/F$ and the set of places $S$. By choosing the right sign in \eqref{eq:defeta}, we can assume that $\eta_F$ is the Stark unit. Therefore we have
\begin{equation*}
\frac{1}{2} (\log |\eta_F|_w + \nu(\tau) \log |\eta_F^\tau|_w) = L'_{F/k,S}(0, \nu)
\end{equation*}
where $\nu$ is the non trivial character of $F/k$. It follows from the functorial properties of $L$-functions that $L_{F/k, S}(s, \nu) = L_{K/k, S}(s, \chi^3)$, and from the definition of $\eta_F$ that
\begin{equation*}
\log |\eta_F|_w + \nu(\tau) \log|\eta_F^\tau|_w = \sum_{g \in G} \chi^3(g) \log|\eta^g|_w. 
\end{equation*}
This completes the proof of the claim. 

\smallskip

Now, by Proposition~\ref{prop:prodform}, we know that
\begin{multline*}
\left(\frac{1}{2} \sum_{g \in G} \chi(g) \log |\eta^g|\right) \left(\frac{1}{2} \sum_{g \in G} \chi^3(g) \log |\eta^g|\right) \left(\frac{1}{2} \sum_{g \in G} \chi^5(g) \log |\eta^g|\right) \\
= \pm L'_{K/k,S}(0, \chi) L'_{K/k,S}(0, \chi^3) L'_{K/k,S}(0, \chi^5).
\end{multline*}
We cancel the non zero terms corresponding to $\chi^3$ using Claim~\ref{claim:chi3} and, since $\chi$ and $\chi^5$ are conjugate, we get
\begin{multline*}
\left|\frac{1}{2} \sum_{g \in G} \chi(g) \log |\eta^g|\right|^2 = \left(\frac{1}{2} \sum_{g \in G} \chi(g) \log |\eta^g|\right) \left(\frac{1}{2} \sum_{g \in G} \chi^5(g) \log |\eta^g|\right) \\
= L'_{K/k,S}(0, \chi) L'_{K/k,S}(0, \chi^5) = |L'_{K/k,S}(0, \chi)|^2.
\end{multline*}
Taking square-roots, we get
 \begin{equation*}
\left|\frac{1}{2} \sum_{g \in G} \chi(g) \log |\eta^g|\right| = \left|\frac{1}{2} \sum_{g \in G} \chi^5(g) \log |\eta^g|\right|  = |L'_{K/k,S}(0, \chi)| = |L'_{K/k,S}(0, \chi^5)|.
\end{equation*}
Note that we have directly using \cite[Prop.~I.3.4]{tate:book}
 \begin{multline*}
\frac{1}{2} \sum_{g \in G} \chi_0(g) \log |\eta^g| = \frac{1}{2} \sum_{g \in G} \chi^2(g) \log |\eta^g| = \frac{1}{2} \sum_{g \in G} \chi^4(g) \log |\eta^g| \\ 
= L'_{K/k,S}(0, \chi_0) = L'_{K/k,S}(0, \chi^2) = L'_{K/k,S}(0, \chi^4) = 0. 
\end{multline*}

\smallskip

We now prove that $\bar\eta$ is unique up to multiplication by an element of $H$. Assume that $\bar\eta'$ is another element of $U_K^-$ satisfying (P1), (P2) and such that $N_H \cdot \bar\eta'$ is the Stark unit for the extension $F/k$ and the set of places $S$. Let $u \in \mathbb{Q}[H]$ be such that $\bar\eta' = u \cdot \bar\eta$. By Corollary~\ref{cor:unicity}, $u$ is a $2$-unit. Now, by hypothesis, $\bar\eta_F = N_H \cdot (u \cdot \bar\eta) = u \cdot (N_H \cdot \bar\eta) = u \cdot \bar\eta_F$ and thus $e_0u = e_0$. Write $u_1$ for the element of $\mathbb{Q}(\omega)$ such that $(1, u_1)$ corresponds to $u$ by the isomorphism in \eqref{eq:q6iso}. Since both $\bar\eta$ and $\bar\eta'$ satisfy (P1), we have $\text{Norm}(u) = 1$ and thus $N_{\mathbb{Q}(\omega)/\mathbb{Q}}(u_1) = 1$. But $u_1$ is a 2-unit in $\mathbb{Q}(\omega)$ and there is only prime ideal above $2$ in $\mathbb{Q}(\omega)$. Therefore $u_1$ is in fact a unit and $u \in H$. 

\smallskip

Finally, it remains to prove that $K(\sqrt{\eta})/k$ is an abelian extension. As noted before this is equivalent to prove that $(\gamma - 1) \cdot \bar{\eta} \in 2 \cdot \bar{U}^-_K$ by \cite[Prop.~IV.1.2]{tate:book}. Now $\gamma$ acts on $U_K^-$ as $-\sigma^2$. Thus, by the definition of $\bar{\eta}$ and the isomorphism between $U^-_K$ and $\mathbb{Z}[H]$, this is equivalent to prove that
\begin{equation}\label{eq:abcond}
(\sigma^2 + 1) \kappa f \in 2 \mathbb{Z}[H].  
\end{equation}

\begin{claim}\label{eq:2zh}
Let $x \in \mathbb{Z}[H]$. Then $x \in 2\mathbb{Z}[H]$ if and only if $x e_0 \in 2e_0\mathbb{Z}[H]$ and $xe_1 \in 2e_1\mathbb{Z}[H]$.
\end{claim}

\noindent If $x \in 2\mathbb{Z}[H]$ then clearly $x e_0 \in 2e_0\mathbb{Z}[H]$ and $xe_1 \in 2e_1\mathbb{Z}[H]$. Reciprocally, assume that $x e_0 = 2e_0a_0$ and $x e_1 = 2e_1a_1$ with $a_0, a_1 \in \mathbb{Z}[H]$. Let $a := e_0a_0 + e_1a_1$. We have by construction $2a = x \in \mathbb{Z}[H]$ and $3a = (3e_0)a_0 + (3e_1)a_1 \in \mathbb{Z}[H]$. Therefore $a$ belongs to $\mathbb{Z}[H]$ and the claim is proved.

\smallskip

We prove \eqref{eq:abcond} using the claim. On one hand, we have
\begin{equation*}
 e_0 (\sigma^2 + 1) \kappa f = 2^{e'+t_S+1} e_0 f \in 2e_0\mathbb{Z}[H]. 
\end{equation*}
On the other hand, we have
\begin{equation*}
 e_1 (\sigma^2 + 1) \kappa f = 2^{(e-e')/2} e_1 (\sigma^2 + 1) f.
\end{equation*}
The proof will be complete if we prove that $e - e' > 0$. For that we use the following claim. 

\begin{claim}\label{claim:h0iso}
$|\hat{H}^0(T, U_K/U_F)| = 2^{e-e'}.$ 
\end{claim}

Let $U_K^\circ$ be the subgroup of elements $u \in U_K$ such that $u^{1 - \tau} \in U_F$. We have
\begin{align*}
\hat{H}^0(T, U_K/U_F) & = \frac{(U_K/U_F)^T}{\mathcal{N}(U_K/U_F)} = \frac{U_K^\circ/U_F}{\mathcal{N}(U_K)/\mathcal{N}(U_K) \cap U_F} \\
& \simeq \frac{U_K^\circ/U_F}{(\mathcal{N}(U_K) \, U_F)/U_F} \simeq U_K^\circ/(\mathcal{N}(U_K) \, U_F) \\
& \simeq \bar{U}_K^\circ/(\mathcal{N}(\bar{U}_K) + \bar{U}_F). 
\end{align*}
By \eqref{eq:defcok}, it is enough to prove the following isomorphism
\begin{equation}\label{eq:prove}
\bar{U}_{K^+}/(\mathcal{N}(\bar{U}_K) + \bar{U}_k) \simeq \bar{U}_K^\circ/(\mathcal{N}(\bar{U}_K) + \bar{U}_F).
\end{equation}
Since $\bar{U}_{K^+} \cap (\mathcal{N}(\bar{U}_K) + \bar{U}_F) = \mathcal{N}(\bar{U}_K) + \bar{U}_k$, there is a natural injection of the LHS of \eqref{eq:prove} in the RHS induced by the inclusion $\bar{U}_{K^+} \subset \bar{U}_K^\circ$. We prove now that this map is surjective. Let $\bar{u} \in \bar{U}_K^\circ$. Thus $\bar{x} := (1 - \tau) \cdot \bar{u} \in \bar{U}_F$. Note that $(1 - \tau) \cdot \bar{x} = 2 \cdot \bar{x}$. Define $\bar{y} := N_H \cdot \bar{u} - \bar{x} \in \bar{U}_F$ and $\bar{z} := \bar{u} - \bar{y}$.  We have
\begin{equation*}
(1 - \tau) \cdot \bar{z} = (1 - \tau) \cdot \bar{u} - (1 - \tau) N_H \cdot \bar{u} + (1 - \tau) \cdot \bar{x} = \bar{x} - N_H \cdot \bar{x} + 2 \cdot \bar{x} = 0. 
\end{equation*}
Thus $\bar{z} \in \bar{U}_{K^+}$. This proves that $\bar{u} = \bar{z} + \bar{y} \in \bar{U}_{K^+} + \bar{U}_F$. Equation \eqref{eq:prove} follows and the proof of the claim is finished. 

\smallskip

Now by the multiplicativity of the Herbrand quotient and Lemma~\ref{lem:qu}, we find that
\begin{equation}\label{eq:qukf}
Q(T, U_K/U_F) = \frac{Q(T, U_K)}{Q(T, U_F)} =  2^{2d-2}. 
\end{equation}
Therefore $e - e' \geq 2d - 2 \geq 2$. This concludes the proof that $K(\sqrt{\eta})$ is abelian over $k$ and the proof of the theorem. 
\end{proof}

\begin{corollary}\label{cor:square6}
Under the hypothesis of the theorem and assuming that the Stark unit exists, then it is a square in $K$ if and only if the Stark unit for the extension $F/k$ and the set $S$ is a square and $(e - e')/2 + c - c' \geq 1$ where $c$ is the $2$-valuation of $|\Cl_K^-|$, $c'$ is the $2$-valuation of $|\Cl_F^-|$ and $(\bar{U}_k : \mathcal{N}(\bar U_F)) = 2^{e'}$. In particular, if $d \geq 4$ then it is always a square and, in fact, it is a $2^{d-3}$-th power. It is also a square if $d = 3$ and the extension $K/k$ is ramified at some finite prime.
\end{corollary}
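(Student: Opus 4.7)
The plan is as follows. By the unicity part of Theorem~\ref{th:sextic}, $\bar\varepsilon = h \cdot \bar\eta$ for some $h \in H$, with $\bar\eta = \pm \kappa f \cdot \bar\theta$ and $U_K^- = \mathbb{Z}[H] \cdot \bar\theta$. Since $\bar U_{K^+}$ is torsion-free, $\bar\varepsilon$ is a $2^r$-th power in $K$ if and only if $\kappa f \in 2^r \mathbb{Z}[H]$. I would extend Claim~\ref{eq:2zh} from $2$ to an arbitrary $2^r$ (the Bezout argument goes through verbatim using $\gcd(2^r,3) = 1$), reducing this to the pair of conditions $e_0 \kappa f \in 2^r e_0 \mathbb{Z}[H]$ and $e_1 \kappa f \in 2^r e_1 \mathbb{Z}[H]$, which can then be evaluated separately.

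For the $e_0$-component, $e_0 \kappa = 2^{e'+t_S} e_0$ by construction of $\kappa$, while Subclaim~\ref{claim:nclk} combined with the Fitting ideal description of $f$ gives $\chi_0(f) = \pm |\Cl_F^-|$; hence the condition reads $e' + t_S + c' \geq r$, which by Theorem~\ref{th:quadratic} applied to $F/k$ is precisely the condition that the Stark unit $\eta_F = \mathcal{N}_{K/F}(\eta)$ is a $2^r$-th power in $F$. For the $e_1$-component, the isomorphism $e_1 \mathbb{Z}[H] \simeq \mathbb{Z}[\omega]$ gives $v_{(2)}(e_1\kappa) = (e-e')/2$, and $e_1 f$ generates $\Fitt_{\mathbb{Z}[\omega]}((\Cl_K^-)^{e_1})$, an ideal of absolute norm $|\Cl_K^-|/|\Cl_F^-|$. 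Since $2$ is inert in $\mathbb{Z}[\omega]$, this norm is a power of $4$, so $c-c'$ is automatically even and $v_{(2)}(e_1 f) = (c-c')/2$. The $e_1$-condition thus becomes $(e-e')/2 + (c-c')/2 \geq r$, which (because $c-c'$ is even) is equivalent to the inequality stated in the corollary.

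The two specific claims then follow. By Claim~\ref{claim:h0iso} and Equation~\eqref{eq:qukf}, $2^{e-e'} = |\hat H^0(T, U_K/U_F)| \geq Q(T, U_K/U_F) = 2^{2d-2}$, so $(e-e')/2 \geq d-1$. When $d \geq 4$, this makes the $e_1$-condition automatic for $r = d-3$, and Theorem~\ref{th:quadratic} directly provides that $\eta_F$ is a $2^{d-3}$-th power, handling the $e_0$-condition. For $d = 3$ with $K/k$ ramified at a finite prime $\mathfrak{p}_0$, the $e_1$-condition with $r = 1$ is again automatic, and the $e_0$-condition reduces to showing that $\eta_F$ is a square. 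The hardest step will be the ensuing case analysis on the inertia group $I$ at $\mathfrak{p}_0$: if $I \in \{\langle\tau\rangle, G\}$ then $\mathfrak{p}_0$ ramifies in $F/k$ and one invokes the $d = 3$ ramified case of Theorem~\ref{th:quadratic}; if $I = H$, then assumption (A3) forces $\tau$ into the decomposition group, which makes $\mathfrak{p}_0$ inert in $F/k$, so $t_S \geq 1$ and Theorem~\ref{th:quadratic} again applies.
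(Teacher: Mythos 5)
Your proposal is correct and follows essentially the same route as the paper: decompose $\kappa f$ into its $e_0$- and $e_1$-components via the obvious $2^r$-analogue of Claim~\ref{eq:2zh}, identify the $e_0$-condition with the quadratic criterion for $\eta_F$ via Theorem~\ref{th:quadratic}, and read off the $e_1$-condition from the Fitting ideal of $(\Cl_K^-)^{e_1}$ in $\mathbb{Z}[\omega]$ together with $(e-e')/2 \geq d-1$ coming from Claim~\ref{claim:h0iso} and \eqref{eq:qukf}. Your evaluation $v_{(2)}(e_1 f) = (c-c')/2$ (using that the absolute norm of $(2)$ in $\mathbb{Z}[\omega]$ is $4$) is in fact slightly more careful than the paper's phrasing, which literally gives the exponent $c-c'$; the two formulations produce the same condition for $r=1$ because $e-e'$ and $c-c'$ are both nonnegative even integers, so the discrepancy is harmless for the corollary as stated.
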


\begin{proof}
We use the notations and results of the proof of the theorem. By the unicity statement, the Stark unit, if it exists, is equal to $\eta$ or one of this conjugate over $F$. In particular, the Stark unit is a $2^r$-th power in $K$ if and only if $\bar\eta \in 2^r \cdot U_K^-$. By Claim~\ref{eq:2zh} and the construction of $\bar\eta$, this is equivalent to
\begin{equation*}
2^{e'+t_S} e_0 f \in 2^r e_0\mathbb{Z} \quad\text{and}\quad 2^{(e-e')/2} e_1 f \in 2^r e_1\mathbb{Z}[H].  
\end{equation*}
Now $e_0 f \mathbb{Z} = e_0 |\Cl_F^-| \mathbb{Z}$ by the definition of $f$, Claim~\ref{claim:nclk} and the discussion that precedes it. Thus the first condition is equivalent to $e'+t_S+c' \geq r$. For $r = 1$, this is equivalent to the fact that the Stark unit for $F/k$ and the set $S$ is a square by Theorem~\ref{th:quadratic} and the discussion that follows \eqref{eq:eqnf} on the number of primes in $S$ that are inert in $F/k$. For the second condition, recall that $e_1f\mathbb{Z}[H] \simeq \Fitt_{\mathbb{Z}[\omega]}((\Cl_K^-)^{e_1})$ and therefore $e_1f \in 2^v \mathbb{Z}[H]$ where $v$ is the $2$-valuation of the index $(\mathbb{Z}[\omega]:\Fitt_{\mathbb{Z}[\omega]}((\Cl_K^-)^{e_1}))$. By Claim~\ref{claim:nclk} and the computation before it, this index is equal to $|\Cl_K^-|/|\Cl_F^-|$. Therefore the second condition is equivalent to $(e-e')/2 + c-c' \geq r$. This proves the first assertion: the Stark unit for $K/k$ and $S = S(K/k)$ is a square if and only if the Stark unit for the extension $F/k$ and the set $S$ is a square and $(e - e')/2 + c - c' \geq 1$. For the second assertion, we have $e' \geq d-3$ by \eqref{eq:bounde} and $(e-e')/2 \geq d-1$ by Claim~\ref{claim:h0iso} and \eqref{eq:qukf}. Thus $\bar\eta \in 2^{d-3} \cdot U_K^-$ for $d \geq 4$ and we have that the Stark unit is a $2^{d-3}$-th power if $d \geq 4$. Finally, for $d = 3$, the condition $2^{(e-e')/2} e_1 f \in 2 e_1\mathbb{Z}[H]$ is always satisfied. Assume that the extension $K/k$ is ramified at some finite prime. If $F/k$ is also ramified at some finite prime then the Stark unit for the extension $F/k$ and the set $S$ is a square by Theorem~\ref{th:quadratic}. If $F/k$ is unramified at finite primes then any prime ideal that ramifies in $K/k$ is inert in $F/k$ by (A3). Therefore $t_S \geq 1$ and the Stark unit for the extension $F/k$ and the set $S$ is a also square by Theorem~\ref{th:quadratic}. It follows that the Stark unit for $K/k$ is a square by the first part. This concludes the proof. 
\end{proof}

Note that the condition in the case $d = 3$ is sharp. Indeed let $k := \mathbb{Q}(\alpha)$, where $\alpha^3 + \alpha^2 - 9\alpha - 8 = 0$, be the smallest totally real cubic field of class number $3$. Let $v_1, v_2, v_3$ be the three infinite places of $k$ with $v_1(\alpha) \approx -3.0791$, $v_2(\alpha) \approx -0.8785$ and $v_3(\alpha) \approx 2.9576$. Let $K$ be the ray class field of $k$ of modulus $\mathbb{Z}_k v_2 v_3$. The extension $K/k$ is cyclic of order $3$, satisfies (A1), (A2) and (A3) with $S := (S/k)$, and is unramified at finite places. One can check that, if it exists, the Stark unit is not a square in $K$. 

\bibliographystyle{plain}
\bibliography{refs}

\end{document}